\documentclass[12pt]{amsart}
\usepackage{amsmath,amsfonts,amssymb,amsthm}
\usepackage{anysize}
\marginsize{2cm}{2cm}{2cm}{2cm}
\usepackage{graphicx}
\usepackage{color}
\usepackage{comment}
\usepackage{float}

\usepackage[pageref]{backref}
\usepackage[colorlinks=true, linkcolor=blue, citecolor=red, urlcolor=blue, backref=page]{hyperref}
\renewcommand*{\backrefalt}[4]{%
  \ifcase #1 %
    (Not cited.)%
  \or
    (Cited on page~\hyperlink{page.#2}{#2}.)%
  \else
    (Cited on pages~\hyperlink{page.#2}{#2}.)%
  \fi
}

\usepackage{enumerate}
\usepackage{caption}
\usepackage{longtable}
\usepackage{multirow}
\usepackage{lscape} 
\usepackage{array}
\usepackage{multicol}

\usepackage{setspace}
\usepackage{tikz}
\usetikzlibrary{graphs}
\usetikzlibrary{arrows}
\usetikzlibrary{calc,fit,shapes.geometric}

\def\z{\mathfrak{z}}

\def\g{\mathfrak{g}}
\def\h{\mathfrak{h}}
\def\n{\mathfrak{n}}

\def\a{\mathfrak{a}}

\def\C{\mathbb{C}}
\def\R{\mathbb{R}}

\def\N{\mathbb{N}}

\def\G{\mathbb G}
\def\A{\mathbb A}
\def\B{\mathbb B}

\def\alt{\raise1pt\hbox{$\bigwedge$}}
\def\pint{\langle \cdotp,\cdotp \rangle }

\theoremstyle{plain}
\newtheorem{theorem}{\bf Theorem}[section]
\newtheorem{corollary}[theorem]{\bf Corollary}
\newtheorem{proposition}[theorem]{\bf Proposition}
\newtheorem{lemma}[theorem]{\bf Lemma}

\theoremstyle{definition}
\newtheorem{definition}[theorem]{\bf Definition}
\newtheorem{example}[theorem]{\bf Example}

\theoremstyle{remark}
\newtheorem{remark}[theorem]{\bf Remark}

\newcommand{\ri}{{\rm (i)}}
\newcommand{\rii}{{\rm (ii)}}
\newcommand{\riii}{{\rm (iii)}}
\newcommand{\riv}{{\rm (iv)}}

\title[Complex structures on Lie algebras arising from graphs]{Complex structures on 2-step nilpotent Lie algebras arising from graphs}

\author{Adrián Andrada}
\email{adrian.andrada@unc.edu.ar}
\author{Sonia Vera}
\email{svera@unc.edu.ar}

\date{}
\address{FAMAF, Universidad Nacional de C\'ordoba and CIEM-CONICET, Av. Medina Allende s/n, Ciudad Universitaria, X5000HUA C\'ordoba, Argentina}


\subjclass[2020]{17B30, 22E25, 53C15, 05C25}
\keywords{Complex structure, 2-step nilpotent Lie algebra, graph.}

\begin{document}

\begin{abstract}
This work investigates the existence of complex structures on 2-step nilpotent Lie algebras arising from finite graphs. We introduce the notion of adapted complex structure, namely a complex structure that maps vertices and edges of the graph to vertices and edges, and we analyze in depth the restrictions imposed by the integrability condition. We completely characterize the graphs that admit abelian adapted complex structures, showing that they belong to a small family of graphs that we call basic. We prove that any graph endowed with an adapted complex structure $J$ contains a unique $J$-invariant spanning basic subgraph, and conversely, that every such graph can be constructed through a systematic expansion procedure starting from a basic graph. We also explore geometric and combinatorial consequences, including the existence of special Hermitian metrics as well as other graph-theoretic properties.
\end{abstract}

\maketitle

\tableofcontents

\section{Introduction}

A complex structure on a real Lie algebra $\g$ is a linear map $J\colon \g \to \g$ such that $J^2 = -\operatorname{Id}$ and the eigenspaces of $J$ in the complexification $\g^\C$ are complex Lie subalgebras. 
The study of complex structures on nilpotent Lie algebras is an important topic in differential and Hermitian geometry. In his influential work \cite{Sal}, S.\ Salamon established several structural results for nilpotent Lie algebras that admit complex structures and provided the classification of such Lie algebras in dimension 6. Many other authors have studied complex structures on nilpotent Lie algebras from different points of view: for instance, the nilpotency of the complex structure itself (see, among others, \cite{CFGU, GZZ, LU, LUV-IJM, LUV-TAMS, LUV-JA, Rol}), or the existence of compatible Hermitian metrics with special properties (see, among others, \cite{AV, AN, BG, EFV, EFV1, FPS, ST, UV}). Recently, a study of 2-step nilpotent Lie algebras equipped with complex structures appeared in \cite{B}.

In parallel, a different line of research has investigated the construction of Lie algebras from combinatorial data, particularly finite graphs. The foundational work of Dani and Mainkar \cite{DaM} introduced a systematic method to associate a $k$-step nilpotent Lie algebra to any finite simple graph, leading to a fertile interplay between graph theory and the structure theory of nilpotent Lie algebras. These graph Lie algebras have been studied further from algebraic and geometric perspectives, especially for $k=2$, revealing how the combinatorics of the underlying graph influence algebraic and geometric properties (see, for instance, \cite{AD, DW1, DW2, Fa, LW, Ov, PT}).

This paper lies at the intersection of these two threads. Our goal is to study the existence of complex structures on 2-step nilpotent Lie algebras arising from graphs. Specifically, we investigate the existence of complex structures that are compatible with the starting graph; indeed, we consider complex structures that send a vertex or an edge to another vertex or edge and we call them \textit{adapted}. We simply say that the graph admits an adapted complex structure. In Section \ref{sec:definition} we provide the precise definition and exhibit some basic low-dimensional examples, as well as some infinite families of graphs that admit adapted complex structures (namely, the complete bipartite graphs $K_{2n,2m}$). The edges of the graph that connect a pair of vertices $v, w$ with $w=Jv$ play a fundamental role in this article, and we call them \textit{distinguished}. In Section \ref{sec:integrability} we thoroughly investigate the integrability condition for adapted complex structures and obtain several important results. For instance, if $J$ is an adapted complex structure on the graph $\G$, the image by $J$ of an isolated vertex is either a distinguished edge or an even vertex (Corollary \ref{cor: isolated}). Another important result is the fact that the image of a non-distinguished edge $a$ by $J$ is another non-distinguished edge adjacent to $a$ (Proposition \ref{prop: endpoint}). As consequences of this result, we have Corollary \ref{cor: same parity} which states that if $v,w$ are vertices with $w=Jv$ then $\deg(v)=\deg(w)$, and also Corollary \ref{cor:odd}, where we prove that the set of odd vertices is $J$-invariant. Another consequence is the fact that if both vertices $v$ and $w=Jv$ are odd, then these vertices cannot be far apart, in fact, $d(v,w)\leq 2$; this establishes an obstruction for the existence of adapted complex structures (see Corollary \ref{cor: vertices impares lejos}). On the other hand, if $v$ and $w=Jv$ are even vertices, then there is no upper bound on their distance, as Example \ref{ex: peine} shows.

Section \ref{sec:construction} contains the most important results in this article. First, we characterize the graphs that admit adapted complex structures $J$ that satisfy the additional condition of being \textit{abelian}, that is, $[Jx,Jy]=[x,y]$ for any $x,y$ in the corresponding Lie algebra. It turns out that the class of graphs that admit abelian adapted complex structures is very narrow, since they are the union of three possible graphs $\A_1,\, \A_2$ and $\A_3$ described in Example \ref{ex: first examples}, where each of them admits an abelian adapted complex structure. The Lie algebras associated with these graphs are, respectively, $\R^2$, $\h_3\times \R$ and $\h_3\times \h_3$, where $\h_3$ denotes the 3-dimensional Heisenberg Lie algebra. We call such a graph \textit{basic}, and they happen to be fundamental in our analysis. Indeed, in Theorem \ref{thm: existe basico} we show that any graph equipped with an adapted complex structure $J$ has a unique $J$-invariant spanning subgraph which is basic. Conversely, we show a method to construct a graph with an adapted complex structure beginning with a basic graph and adding pairs of edges in a suitable way; in Theorem \ref{thm: expanded graph} we prove the integrability of this adapted complex structure. We call this process an \textit{expansion} of the basic graph. Combining these last two theorems, we see that \textit{any} graph equipped with an adapted complex structure is an expansion of a basic graph (Theorem \ref{thm:characterization}). We use these results to show that the $n$-cycle $C_n$ admits an adapted complex structure if and only if $n=3$ or $n=4$ (Corollary \ref{cor:cycle}). 

In Section \ref{sec:further} we present some results that can be deduced from the characterization of graphs with adapted complex structures in terms of basic graphs. For instance, regarding the step of nilpotency of an adapted complex structure (in the sense of \cite{CFGU}), it is known that such a structure can only be 2-step or 3-step nilpotent (see \cite{GZZ,B}), and in Proposition \ref{prop:3-step} we characterize the graphs equipped with a 3-step nilpotent adapted complex structures. We also show that complete graphs with $4n$ or $4n+3$ vertices admit adapted complex structures, and the same happens with the complete graphs of $4n+1$ or $4n+2$ vertices, after the addition of an isolated vertex for dimensional reasons (see Proposition \ref{prop:complete}). As a consequence, we see that the chromatic number of a graph admitting adapted complex structures can be arbitrarily large. Other graph-theoretical properties of graphs equipped with adapted complex structures, such as the girth and the existence of perfect matchings, are investigated in Propositions \ref{prop:girth} and \ref{prop:matching}, respectively. In particular, using the notion of blow-up of a graph, we show in Proposition \ref{prop: blow} that any graph gives rise to other graphs that admit adapted complex structures. 

In Section \ref{sec:trees}, we study the existence of adapted complex structures on forests, that is, graphs that do not have cycles. Using the characterization of graphs with an adapted complex structure given in the previous sections, we prove that a forest admits an adapted complex structure if and only if it has an even number of connected components and each connected component has at most one vertex with even degree.

Finally, in Section \ref{sec:metrics} we address the problem of finding special Hermitian metrics associated with adapted complex structures. In particular, we completely characterize the graphs with an adapted complex structure that admits a balanced metric where the vertices form an orthogonal set; this happens if and only if the graph has no distinguished edges (see Proposition \ref{prop: balanced}). On the other hand, regarding the existence of SKT (or pluriclosed) metrics, we assume a mild condition on the metric (see equation \eqref{eq: complement}), and we show in Proposition \ref{prop: E orthogonal} that if such a metric exists and the edges form an orthogonal set then the graph must be basic (and hence the adapted complex structure is abelian). Furthermore, we give a non-trivial example of a non-basic graph equipped with an adapted complex structure and a compatible SKT metric (see Example \ref{ex:SKT}).

In summary, in this article we provide an easy procedure to produce 2-step nilpotent Lie algebras equipped with complex structures, beginning with certain basic graphs, and we determine some algebraic and geometric properties of these complex structures. We also exhibit many examples of graphs that admit adapted complex structures, as well as graphs that do not admit any. Our approach combines tools from Lie theory, complex geometry, and combinatorics. Beyond contributing to the understanding of complex structures on nilpotent Lie algebras, our results shed new light on how discrete data encoded in graphs can give rise to geometric structures, reinforcing the bridge between combinatorics and differential geometry.

\section{Preliminaries}\label{prelim}

\subsection{Graphs}

Since we will be working with graphs throughout the paper, a brief review is in order. 

A graph is a pair $\G = (V,E)$ where:
\begin{itemize}
    \item[$\diamond$] $V$ is a finite set, whose elements are called vertices or nodes, and 
    \item[$\diamond$] $E\subseteq \{ \{u,v\} \mid u,v\in V, \, u\neq v\}$ is a set of 2-element subsets of $V$; the elements of $E$ are called edges. We will usually denote the edge $\{u,v\}$ by $\overline{uv} \, (=\overline{vu})$. 
\end{itemize}
If $\overline{uv}$ is an edge, we say that $u$ and $v$ are adjacent. Two edges are adjacent, or incident, if they share a common vertex. If $v\in V$, the neighborhood $N_v$ of $v$ is the set of all vertices adjacent to $v$.

The degree of a vertex $v$ in the graph $\G$ is the number of edges incident with $v$, and it is denoted by $\deg(v)$. That is, $\deg(v)=|N_v|$. If $\deg(v)=0$ then $v$ is called an \textit{isolated} vertex. Furthermore, a vertex $v$ will be called \textit{even} or \textit{odd} if $\deg(v)$ is even or odd, respectively. Given a graph $\G$, it is well known that
\begin{itemize}
    \item[$\diamond$] the sum of the degrees of all the vertices is twice the number of edges,
    \item[$\diamond$] the number of vertices of odd degree is even.
\end{itemize}

A \textit{subgraph} of a graph $\G(V,E)$ is a graph $\G_1(V_1,E_1)$ such that $V_1\subseteq V$ and $E_1\subseteq E$. If $V_1=V$ then $\G_1$ is called a \textit{spanning subgraph} of $\G$. A subgraph $\G_1$ is an \textit{induced subgraph} of $\G$ if two vertices of $\G_1$ are adjacent if and only if they are adjacent in $\G$. 

A \textit{path} of length $r$ from $u$ to $v$ in a graph $\G$ is a sequence of $r+1$ different vertices starting with $u$ and ending with $v$ such that consecutive vertices are adjacent. If there is a path between any two vertices in $\G$, then $\G$ is called \textit{connected}. A maximal connected induced subgraph of $\G$ is called a \textit{connected component} of $\G$. A \textit{cycle} is a path that begins and ends at the same vertex, without passing through any other vertex twice. 

A \textit{wedge} is a path of length 2; it consists of three vertices and two edges, with a central vertex $u$ connected to the other two $v$ and $w$. We denote it by $(u;v,w)$; we call $u$ the center of the wedge, and $v,w$ the endpoints of the wedge.

The \textit{distance} between two vertices $v,w$ in a graph is the number of edges in the shortest path between them, and it is denoted by $d(v,w)$. For instance, $N_v=\{w\in  V\mid d(v,w)=1\}$. If there is no path between two vertices, then, by convention, the distance is defined as $+\infty$.  

\medskip

Next, we define some special graphs. A graph $\G=\G(V,E)$ is
\begin{itemize}
\item[$\diamond$] \textit{empty} if $E=\emptyset$,
\item[$\diamond$] \textit{complete} if every vertex is adjacent to all other vertices. The complete graph with $n$ vertices is denoted by $K_n$,
\item[$\diamond$] \textit{bipartite} if $V=V_1\sqcup V_2$ (disjoint union) in such a way that every edge connects a vertex in $V_1$ to a vertex in $V_2$, that is, there are no edges connecting vertices within the same set. It is \textit{complete bipartite} if every vertex in $V_1$ is adjacent to every vertex in $V_2$; the complete bipartite graphs are usually denoted $K_{m,n}$, where $m=|V_1|$ and $n=|V_2|$,
\item[$\diamond$] a \textit{cycle graph} if it consists of a single cycle. The cycle graph with $n$ vertices is usually denoted by $C_n$,
\item[$\diamond$] a \textit{tree} if it is connected and does not contain any cycles. A graph whose connected components are all trees is called a \textit{forest}. Equivalently, a forest is a graph with no cycles.
\end{itemize}

The union of two graphs $\G_1=\G_1(V_1,E_1)$ and $\G_2=\G_2(V_2,E_2)$ is the graph whose set of vertices is $V_1\cup V_2$ and its set of edges is $E_1\cup E_2$. Finally, given a graph $\G$, we use the notation $\G_\ast$ to denote the union of $\G$ with an isolated vertex. 

\subsection{Complex structures on Lie algebras}

A complex structure on a real Lie algebra $\g$ is an endomorphism $J$ of $\g$ satisfying $J^2=-\operatorname{Id}$ and such that
\begin{equation}\label{eq:N}
N_J(x,y):=[x,y]+J([Jx,y]+[x,Jy])-[Jx,Jy]=0 \quad \text{for all } x,y\in \g.
\end{equation}
The $\g$-valued 2-form $N_J$ is called the Nijenhuis tensor associated to $J$; the vanishing of $N_J$ is also known as the \textit{integrability} of $J$. This condition can also be understood in terms of the complexified Lie algebra $\g^\C=\g\otimes_\R \C$. Indeed, it is well known that \eqref{eq:N} holds if and only if ${\g}^{1,0}$, the $i$-eigenspace of the $\C$-linear extension of $J$ to $\g^\C$, is a complex Lie subalgebra of $\g ^{\C}$. In this case $\g^{0,1}$, the $(-i)$-eigenspace of the $\C$-linear extension of $J$ to $\g^\C$, is also a complex Lie subalgebra of $\g^\C$ and the decomposition $\g^\C=\g^{1,0}\oplus \g^{0,1}$ holds. A pair $(\g, J)$ will denote a Lie algebra $\g$ equipped with a complex structure $J$. Yet another equivalence for the integrability condition \eqref{eq:N} is given in terms of complex forms on $\g^\C$: if we decompose $(\g^\C)^*=\g^{*1,0}\oplus \g^{*0,1}$, with $\g^{*1,0}$ the annihilator of $\g^{0,1}$ and $\g^{*0,1}$ the annihilator of $\g^{1,0}$, then \eqref{eq:N} holds if and only if
\begin{equation}\label{eq: diff}
 d(\alt^{1,0}\g^*)\subseteq \alt^{2,0} \g^{*}\oplus \alt^{1,1}\g^{*}, 
\end{equation}
where $\alt^{p,q} \g^*:=\alt^p \g^{*1,0}\otimes \alt^q \g^{*0,1}$ and $d$ denotes the $\C$-linear extension of the usual Chevalley-Eilenberg differential for $\g$. That is, $J$ is integrable if and only if  the $(0,2)$-component of the differential of any $(1,0)$-form vanishes.

A useful and well-known observation is that
\begin{equation} \label{eq: simetrias}
    N_J(Jx,Jy) = N_J(x,y), \quad N_J(Jx,y)  = -JN_J(x,y), \quad N_J(x,Jy)  = -JN_J(x,y),
\end{equation}
for any $x,y\in \g$. Thus, the vanishing of $N_J(x,y)$ implies the vanishing of $N_J(Jx,Jy)$, $N_J(Jx,y)$ and $N_J(x,Jy)$.

A complex structure $J$ on $\g$ is called \textit{abelian} if the following condition holds:
\begin{equation}\label{eq: abelian}
[Jx,Jy]=[x,y], \qquad  x,y \in \g. 
\end{equation} 
This is equivalent to $\g^{1,0}$ and $\g^{0,1}$ being abelian Lie subalgebras of $\g^\C$, and also equivalent to $d(\alt^{1,0}\g^*)\subseteq \alt^{1,1}\g^{*}$. If $\g$ admits an abelian complex structure then $\g$ is $2$-step solvable, that is, the commutator ideal $[\g,\g]$ is abelian (see \cite[Proposition 4.3]{ABDO}), and it is easily seen that the center of $\g$ is $J$-invariant. 

Two complex structures $J_1$ and $J_2$ on the Lie algebra $\g$ are said to be \textit{equivalent} if there exists an automorphism $\varphi$ of $\g$ satisfying $J_2\,  \varphi = \varphi\, J_1$. 

\medskip

We point out that the existence of a complex structure $J$ on a Lie algebra $\g$ has geometric consequences for any Lie group $G$ with $\operatorname{Lie}(G)=\g$. Indeed, if $L_g$ denotes the left-multiplication on $G$ by $g\in G$, then we may define $J_g\colon T_gG\to T_gG$ by $J_g=(d L_g)_e J (d L_g)_g^{-1}$, where we have identified $\g$ with $T_eG$. It is easily seen that the endomorphism of the tangent bundle $J\colon TG\to TG$ thus defined is an almost complex structure on $G$, that is, $J_g^2=-\operatorname{Id}_{T_gG}$ for all $g\in G$, and it is integrable, i.e. $N_J(X,Y)=0$ for any vector fields $X,Y$ on $G$, where $N_J$ is the $(1,2)$-tensor defined on $G$ in the same way as in \eqref{eq:N}. It follows from the remarkable Newlander-Nirenberg theorem \cite{NN} that then $G$ is a complex manifold such that, by construction, the left-multiplications $L_g$ are holomorphic maps. Furthermore, if $\Gamma$ is a co-compact discrete subgroup of $G$ then the compact manifold $\Gamma\backslash G$ admits an induced complex structure; in this way we may produce examples of compact complex manifolds. 

When $G$ is nilpotent and simply connected the compact quotient $\Gamma\backslash G$ is called a \textit{nilmanifold}. There is a well-known criterion for the existence of co-compact discrete subgroups (called \textit{lattices}) in simply connected nilpotent Lie groups, given by Malcev in \cite{Mal}: such a Lie group has a lattice if and only if its Lie algebra admits a basis such that the corresponding structure constants are all rational numbers.

\subsection{2-step nilpotent Lie algebras arising from graphs} 
We will be concerned with 2-step nilpotent Lie algebras arising from graphs. First, we recall that a Lie algebra $\g$ is 2-step nilpotent if its commutator ideal $[\g,\g]$ is contained in the center of $\g$. Next, we briefly recall how to associate a 2-step nilpotent Lie algebra with a graph, following a construction introduced in \cite{DaM}.

Let $\G=\G(V,E)$ be a graph with set of vertices $V$ and set of edges $E$. Let $W_0$ be the real vector space generated by $V$ and let $W_1$ be the subspace of $\alt^2 V$ generated by $E$, that is,
\[ W_1=\text{span}_\R \{ v\wedge w \mid \text{$v$ and $w$ are vertices joined by an edge}\}. \]
Set $\n_\G:= W_0\oplus W_1$ and define a Lie bracket on $\n_\G$ as follows:
\begin{align} \label{eq: corchete}
& \diamond \text{if } v,w\in V \text{ then } [v,w]=\begin{cases} v\wedge w, \quad  \text{ if there is an edge joining } v \text{ and } w,\\
              0, \qquad  \quad \, \text{otherwise},
             \end{cases} \\
& \diamond W_1 \text{ is contained in the center}. \nonumber
\end{align}

If $V=\{v_1,\ldots, v_n\}$ and there is an edge between $v_i$ and $v_j$, this edge will be denoted by $e_{i,j}$, and we will  identify $v_i\wedge v_j$ with $e_{i,j}$ when $i<j$. Then, the Lie bracket between two vertices $v_i$ and $v_j$ with $i < j$ is the edge that joins them, if it exists, and zero otherwise. As a consequence, we may consider $E$ as a basis of $W_1$.

Clearly, $\n_\G$ is abelian if $E$ is empty, and it is a 2-step nilpotent Lie algebra of dimension $|V|+|E|$ if $E$ is not empty. 
Note that the commutator ideal of $\n_\G$ is $W_1$, and the center of $\n_\G$ is spanned by $E$ together with the set of isolated vertices. If the graph $\G$ is the union of graphs $\G_1, \ldots, \G_r$ (not necessarily the connected components) then it is clear that $\n_\G$ is isomorphic to $\n_{\G_1}\times \cdots \times \n_{\G_r}$. Moreover, it was proved in \cite{Ma} that given two graphs $\G_1$ and $\G_2$, the Lie algebras $\n_{\G_1}$ and $\n_{\G_2}$ are isomorphic if and only if $\G_1$ and $\G_2$ are isomorphic.

\begin{example}
Let us consider the graph $\G$ in the following figure:
\[
\begin{tikzpicture}[scale=0.7]\footnotesize
 \tikzset{every loop/.style={looseness=-10}}
\tikzset{every loop/.style={looseness=30}}
\node (1) at (0,0) [draw,circle,inner sep=1pt,fill=black!100,label=180:{$v_1$}] {};
\node (2) at (2,0) [draw,circle,inner sep=1pt,fill=black!100,label=90:{$v_2$}] {};
\node (3) at (4,1) [draw,circle,inner sep=1pt,fill=black!100,label=360:{$v_3$}] {};
\node (4) at (4,-1) [draw,circle,inner sep=1pt,fill=black!100,label=360:{$v_4$}] {};

\path
(1) edge [-, black] node[below=2] {} (2)
(2) edge [-, black] node[above=2] {} (3)
(2) edge [-, black] node[below=2] {} (4)
(3) edge [-, black] node[right=3] {} (4);
\end{tikzpicture}  
\] 
The Lie bracket on $\n_\G$ is given by
\[  [v_1,v_2]=e_{1,2}, \; [v_2,v_3]=e_{2,3}, \; [v_2,v_4]=e_{2,4}, \; [v_3,v_4]=e_{3,4}.    \]
\end{example}

\begin{remark}
Note that the Lie bracket on $\n_\G$ satisfies the following very strong condition:
\begin{equation}\label{eq:property-graph}
 \text{if } v_i,v_j,v_k,v_l\in V \text{ satisfy } [v_i,v_j]=[v_k,v_l]\neq 0  \text{ then } i=k \text{ and } j=l.
\end{equation}
\end{remark}

\begin{remark}
The simply connected Lie group corresponding to a 2-step nilpotent Lie algebra arising from a graph has lattices. Indeed, in the basis $\mathcal{B}=V\cup E$ of $\n_\G$, with $\G=\G(V,E)$, the structure constants are $0, 1$ or $-1$, all rational numbers.  
\end{remark}

\begin{example}\label{heis}
Let us consider the $(2n+1)$-dimensional Heisenberg Lie algebra $\h_{2n+1}$, which has a basis $\{v_i,w_i\}_{i=1}^n\cup\{z\}$ with Lie bracket $[v_i,w_i]=z$ for $1\leq i\leq n$. Then it is clear that $\h_3$ arises from the graph with two vertices joined by one edge. Moreover, it follows from \eqref{eq:property-graph} that the $\h_{2n+1}$ does not arise from a graph if $n>1$.
\end{example}

\begin{remark}
Another way to describe the 2-step nilpotent Lie algebra associated to the graph $\G=\G(V,E)$ is as a quotient of the free 2-step nilpotent Lie algebra generated by the vertices, that is, $V\oplus \alt^2 V$ with Lie bracket $[v,w]=v\wedge w\in \alt^2 V$ for $v,w\in V$, and $\alt^2 V$ central. Indeed, if we set  
\[ R:=\text{span}_{\R}\{ v\wedge w \mid \text{$v$ and $w$ are vertices \emph{not} connected by an edge}\},  \]
then $R$ is an ideal of $V\oplus \alt^2 V$ and it is clear that $\n_\G=(V\oplus \alt^2 V)/R$.
\end{remark}

\section{Adapted complex structures: definition and first examples}\label{sec:definition}

In this section we introduce the main object of study of this article, namely, complex structures on 2-step nilpotent Lie algebras arising from graphs, which are compatible in a certain sense with these graphs.

\begin{definition}
Let $\G=\G(V,E)$ be a graph, and $\n_\G$ the associated $2$-step nilpotent Lie algebra. A complex structure $J$ on $\n_\G$ is called \textit{adapted} if for any $x\in V\cup E\subset \n_\G$ we have that either $Jx\in V\cup E$ or $-Jx\in V\cup E$.  In other words, $J$ sends a vertex or an edge to a vertex or an edge (or their negatives).
Any edge joining a pair of vertices $v, w$ with $w=Jv$ will be called \textit{distinguished}.

We will say that a graph $\G$ admits an adapted complex structure if $\n_\G$ carries an adapted complex structure, for short.
\end{definition}

\begin{remark}
It is clear from the definition that two distinguished edges are never adjacent.
\end{remark}

\begin{example}\label{ex: first examples}
(i) Let $\A_1$ denote the empty graph with two vertices $v_1$ and $v_2$, as shown in Figure \ref{fig: R2}. Then $\n_{\A_1}$ is the 2-dimensional abelian Lie algebra and it carries an adapted complex structure $J$ given by $Jv_1=v_2$ and $J^2=-\operatorname{Id}$.
\begin{figure}[h]
    \centering
    \begin{tikzpicture}\footnotesize
\node (1) at (-1,0) [draw,circle,inner sep=1pt,fill=black!100,label=90:{$v_1$}] {};
\node (2) at (0.5,0)  [draw,circle,inner sep=1pt,fill=black!100,label=90:{$v_2$}] {};
\end{tikzpicture}
    \caption{Graph $\A_1$}
    \label{fig: R2}
\end{figure} 

\smallskip

(ii) Consider the graph $\A_2$ given in Figure \ref{fig: h3R}. 
\begin{figure}[h]
    \centering
    \begin{tikzpicture}\footnotesize
\node (1) at (-1,0) [draw,circle,inner sep=1pt,fill=black!100,label=90:{$v_1$}] {};
\node (2) at (1,0)  [draw,circle,inner sep=1pt,fill=black!100,label=90:{$v_2$}] {};
\node (3) at (3,0) [draw,circle,inner sep=1pt,fill=black!100,label=90:{$v_3$}] {};

\path
(1) edge [-, black] node[above=-0] {} (2);

\end{tikzpicture}
    \caption{Graph $\A_2$}
    \label{fig: h3R}
\end{figure} 
The Lie bracket on $\n_{\A_2}$ is given simply by $[v_1,v_2]=e_{1,2}$, so that $\n_{\A_2}\cong \h_3\times \R$. It can be easily verified that the endomorphism $J$ of $\n_{\A_2}$ defined by $Jv_1=v_2, \, Jv_3=e_{1,2}$ and $J^2=-\operatorname{Id}$ is an adapted complex structure on $\n_{\A_2}$. Moreover, this complex structure is abelian.

\smallskip

(iii) Consider now the graph $\A_3$ given in Figure \ref{fig: h3R-2}.
The Lie bracket on $\n_{\A_3}$ is given by $[v_1,v_2]=e_{1,2}$ and $[v_3,v_4]=e_{3,4}$, hence we have an isomorphism $\n_{\A_3}\cong \h_3\times \h_3$. There is an adapted complex structure on $\n_{\A_3}$ given by $Jv_1=v_2, \, Jv_3=v_4, \, Je_{1,2}=e_{3,4}$ and $J^2=-\operatorname{Id}$. Again, it is readily verified that this complex structure is abelian.
\begin{figure}[H]
    \centering
    \begin{tikzpicture}\footnotesize
\node (1) at (-1,0) [draw,circle,inner sep=1pt,fill=black!100,label=90:{$v_1$}] {};
\node (2) at (1,0)  [draw,circle,inner sep=1pt,fill=black!100,label=90:{$v_2$}] {};
\node (3) at (3,0) [draw,circle,inner sep=1pt,fill=black!100,label=90:{$v_3$}] {};
\node (4) at (5,0) [draw,circle,inner sep=1pt,fill=black!100,label=90:{$v_4$}] {};

\path
(1) edge [-, black] node[above=-0] {} (2)
(3) edge [-, black] node[above=-0] {} (4);
\end{tikzpicture}
    \caption{Graph $\A_3$}
    \label{fig: h3R-2}
\end{figure} 
\end{example}

\begin{remark}
In \cite[Proposition 2.2]{ABD} it was shown that the Lie algebra $\n_{\A_2}\cong \h_3\times \R$ admits, up to equivalence, only one complex structure, which is abelian; therefore, it coincides with the adapted complex structure given in Example \ref{ex: first examples}$\rii$. On the other hand, it was proved in \cite[Theorem 3.3]{ABD} that the Lie algebra $\n_{\A_3}\cong \h_3\times \h_3$ admits a 1-dimensional family $\{J_s\}_{s\in\R}$, of abelian complex structures; however, it is easily verified that only $J_0$ is adapted to the graph $\A_3$. This complex structure $J_0$ coincides with the adapted complex structure given in Example \ref{ex: first examples}$\riii$.
\end{remark}

\smallskip

We next exhibit an infinite family of graphs that admit adapted complex structures, namely, some complete bipartite graphs.

\begin{proposition}\label{prop: bipartite}
Let $K_{2n,2m}$ be the complete bipartite graph on $2n$ and $2m$ vertices, with $n,m\in\N$. Then $K_{2n,2m}$ admits an adapted complex structure.
\end{proposition}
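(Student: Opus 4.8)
The plan is to construct an explicit adapted complex structure $J$ on $\n_{K_{2n,2m}}$ by pairing up vertices within each side of the bipartition and then checking that the induced action on edges is well-defined and that the Nijenhuis tensor vanishes. Write the bipartition as $V = \{x_1,\dots,x_{2n}\}\cup\{y_1,\dots,y_{2m}\}$, and define $J$ on vertices by $Jx_{2i-1}=x_{2i}$, $Jx_{2i}=-x_{2i-1}$ for $1\le i\le n$, and similarly $Jy_{2j-1}=y_{2j}$, $Jy_{2j}=-y_{2j-1}$ for $1\le j\le m$. Since every $x_k$ is adjacent to every $y_l$, the edge $e_{x_k,y_l}$ (i.e.\ $x_k\wedge y_l$) exists for all $k,l$, so we may extend $J$ to $W_1$ by $J(x_k\wedge y_l) = (Jx_k)\wedge(Jy_l)$, which is again (plus or minus) an edge of the graph. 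One checks directly that $J^2=-\operatorname{Id}$ on $W_0$ by the definition on vertices, and on $W_1$ because $J^2(x_k\wedge y_l) = (J^2x_k)\wedge(J^2y_l) = (-x_k)\wedge(-y_l) = x_k\wedge y_l$. Thus $J$ is an almost complex structure, and it is adapted by construction since it sends every vertex to a vertex and every edge to $\pm$(an edge).

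Next I would verify integrability, i.e.\ $N_J(u,v)=0$ for all $u,v\in\n_{K_{2n,2m}}$. By bilinearity and the symmetries \eqref{eq: simetrias}, it suffices to check $N_J$ on pairs of basis vectors, and since $W_1$ is central and $J$-invariant (it is spanned by edges and $J$ permutes edges up to sign), any bracket involving an element of $W_1$ vanishes, so $N_J(u,v)=0$ automatically whenever $u$ or $v$ lies in $W_1$. Hence the only case to examine is $u,v\in W_0$, i.e.\ $u,v$ vertices. If $u,v$ lie on the same side of the bipartition, then $[u,v]=0$ and also $[Ju,Jv]=0$ (same side), while $[Ju,v]$ and $[u,Jv]$ also vanish, so $N_J(u,v)=0$ trivially. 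The substantive case is $u=x_k$, $v=y_l$ on opposite sides: here $[x_k,y_l]=x_k\wedge y_l$, and I compute $N_J(x_k,y_l) = [x_k,y_l] + J([Jx_k,y_l]+[x_k,Jy_l]) - [Jx_k,Jy_l]$; since $Jx_k=\pm x_{k'}$ and $Jy_l=\pm y_{l'}$ for the appropriate partner indices, every bracket here is (a signed) edge, and $J$ applied to an edge is again a signed edge. The computation reduces to a bookkeeping identity among the four wedges $x_k\wedge y_l$, $x_{k'}\wedge y_l$, $x_k\wedge y_{l'}$, $x_{k'}\wedge y_{l'}$ with their signs; using $J(x_k\wedge y_l)=(Jx_k)\wedge(Jy_l)$ the two terms $J[Jx_k,y_l]$ and $-[Jx_k,Jy_l]$ should cancel, as should $J[x_k,Jy_l]$ against part of the remaining terms, leaving $[x_k,y_l]$ to be cancelled as well — I would track the signs carefully to confirm it all vanishes.

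The main obstacle I anticipate is precisely the sign bookkeeping in this last step: the antisymmetry of the wedge product interacts with the $\pm$ signs in the definition of $J$ on vertices, so one must be careful that $J(x_k\wedge y_l)$ is computed consistently (e.g.\ $J(x_2\wedge y_1) = (-x_1)\wedge y_2 = -x_1\wedge y_2 = -e_{x_1,y_1}$ under the identification $v_i\wedge v_j = e_{i,j}$ with $i<j$), and that the identification of $v_i\wedge v_j$ with $e_{i,j}$ only for $i<j$ does not introduce a hidden inconsistency. A clean way to sidestep ordering issues is to work entirely in $\alt^2 W_0$ without choosing an orientation of the edges, treating $J$ on $W_1$ as the restriction of $\bigwedge^2(J|_{W_0})$; then $N_J(x_k,y_l)=0$ follows from the fact that $J|_{W_0}$, split as a direct sum over the two sides, is a complex structure on each summand and the bracket $W_0\times W_0\to W_1$ is $J$-compatible because it is (the graph-truncation of) the wedge map, which satisfies $(Ja)\wedge(Jb) = \tfrac12\big(a\wedge b - (\text{correction terms that vanish here})\big)$ — more directly, one observes that on opposite-side pairs the bracket agrees with the full wedge $x_k\wedge y_l$ and $J$ on $W_1$ is exactly $\bigwedge^2 J$ restricted there, so the identity $N_J=0$ is the standard fact that $\bigwedge^2 J \circ \wedge = \wedge \circ (J\otimes J + \dots)$ specializes correctly. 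I would present the argument in this coordinate-light form and relegate the explicit four-term check to a short displayed computation.
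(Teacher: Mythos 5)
Your construction breaks down at the definition of $J$ on the edges. You set $J(x_k\wedge y_l)=(Jx_k)\wedge(Jy_l)$, i.e.\ you take $J$ on $W_1$ to be the restriction of $\bigwedge^2(J|_{W_0})$. But $\bigwedge^2$ of a complex structure is an \emph{involution}, not a complex structure: your own computation $J^2(x_k\wedge y_l)=(J^2x_k)\wedge(J^2y_l)=(-x_k)\wedge(-y_l)=x_k\wedge y_l$ shows $J^2=+\operatorname{Id}$ on $W_1$, which you then mislabel as verifying $J^2=-\operatorname{Id}$. So the map you define is not an almost complex structure at all. Moreover, even setting that aside, the hoped-for cancellation in $N_J$ does not occur with this choice: for $k,l$ odd one gets
\[
N_J(x_k,y_l)=x_k\wedge y_l-x_k\wedge Jy_l-Jx_k\wedge y_l-Jx_k\wedge Jy_l,
\]
since $J[Jx_k,y_l]=(J^2x_k)\wedge(Jy_l)=-x_k\wedge Jy_l$ and $J[x_k,Jy_l]=-Jx_k\wedge y_l$; these four wedges are linearly independent basis edges, so $N_J\neq 0$. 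The ``coordinate-light'' reformulation at the end of your proposal relies on the same identification of $J|_{W_1}$ with $\bigwedge^2 J$, so it inherits both defects.

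The repair is to pair the edges differently: one must move only \emph{one} endpoint index. The paper's choice is $Jv_{2i-1}=v_{2i}$, $Jw_{2j-1}=w_{2j}$ and $Je_{i,2j-1}=e_{i,2j}$ (equivalently $Je_{i,2j}=-e_{i,2j-1}$), so that $J^2=-\operatorname{Id}$ on the span of the edges. With this definition the only nontrivial Nijenhuis computation is
\[
N_J(v_{2i-1},w_{2j-1})=e_{2i-1,2j-1}+J(e_{2i,2j-1}+e_{2i-1,2j})-e_{2i,2j}=e_{2i-1,2j-1}+e_{2i,2j}-e_{2i-1,2j-1}-e_{2i,2j}=0,
\]
and the same-side pairs are handled exactly as you indicate (both spans are $J$-invariant abelian subalgebras). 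So your overall strategy (pair vertices within each part, check $N_J$ only on mixed pairs) is the right one, but the specific edge action you chose is the one point where the argument genuinely fails, and it must be replaced by an action of the form $Je_{i,2j-1}=e_{i,2j}$ (or symmetrically $Je_{2i-1,j}=e_{2i,j}$, as in Remark \ref{rem: bipartite}).
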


\begin{proof}
Let us write $K_{2n,2m}=V_1\sqcup V_2$ with $V_1=\{v_1,\ldots, v_{2n}\}$
and $V_2=\{ w_1,\ldots, w_{2m}\}$, and let us denote by $e_{i,j}$ the edge that joins the vertex $v_i$ with the vertex $w_j$. Therefore the Lie bracket on the associated Lie algebra $\n_{K_{2n,2m}}$ is given by
\[  [v_i,w_j]=e_{i,j}, \quad 1\leq i\leq 2n, \quad 1\leq j\leq 2m.  \]
We consider the almost complex structure on $\n_{K_{2n,2m}}$ given by
\[  Jv_{2i-1}= v_{2i}, \quad Jw_{2i-1}= w_{2i}, \quad Je_{i,2j-1}= e_{i,2j}, \quad J^2=-\operatorname{Id}. \]
We verify that the Nijenhuis tensor $N_J$ vanishes. Since the center $\z$ of $\n_{K_{2n,2m}}$, spanned by $\{e_{i,j}\}$, is $J$-invariant, we only need to verify that $N_J(v_{2i-1},v_{2j-1})=0$, $N_J(w_{2i-1},w_{2j-1})=0$ and $N_J(v_{2i-1},w_{2j-1})=0$. The first two are a consequence of the fact that both $\text{span}\{v_i\}$ and $\text{span}\{w_i\}$ are $J$-invariant abelian subalgebras of $\n_{K_{2n,2m}}$. Finally, we compute
\begin{align*}
    N_J(v_{2i-1},w_{2j-1}) & = [v_{2i-1},w_{2j-1}]+J([v_{2i},w_{2j-1}]+[v_{2i-1},w_{2j}])-[v_{2i},w_{2j}] \\
    & = e_{2i-1,2j-1}+J(e_{2i,2j-1}+e_{2i-1,2j})-e_{2i,2j} \\
    & = e_{2i-1,2j-1}+e_{2i,2j}-e_{2i-1,2j-1}-e_{2i,2j}\\
    & =0.
\end{align*}
Therefore, $J$ is integrable. 
\end{proof}

\begin{remark}\label{rem: bipartite}
There is another complex structure on $K_{2n,2m}$, defined analogously by:
\[  Jv_{2i-1}= v_{2i}, \quad Jw_{2i-1}= w_{2i}, \quad Je_{2i-1,j}= e_{2i,j}, \quad J^2=-\operatorname{Id}. \]
\end{remark}

\begin{example}\label{ex:G1G2}
We next exhibit some graphs equipped with adapted complex structures that are not abelian, as can be easily verified. 

(i) Consider the graph $\G_1$ given by the figure below:
\begin{center}
\begin{tikzpicture}[scale=1]\footnotesize
    \node (1) at (0,0) [draw,circle,inner sep=1pt,fill=black!100,label=270:{$v_4$}] {};
    \node (2) at (2,0) [draw,circle,inner sep=1pt,fill=black!100,label=270:{$v_3$}] {};
    \node (3) at (2,1) [draw,circle,inner sep=1pt,fill=black!100,label=90:{$v_2$}] {};
    \node (4) at (0,1) [draw,circle,inner sep=1pt,fill=black!100,label=90:{$v_1$}] {};

    \path
    (1) edge  (2)
    (2) edge  (4)
    (3) edge  (1)
    (3) edge  (4)
    (2) edge  (3)
    (4) edge  (1);    
\end{tikzpicture}
\end{center}
Then it can be easily verified that $\n_{\G_1}$ carries an adapted complex structure $J_1$ given by
\[ J_1v_1=v_2, \quad J_1v_3=v_4, \quad J_1e_{1,2}=e_{3,4}, \quad J_1e_{1,3}=e_{1,4}, \quad J_1e_{2,3}=e_{2,4}. \]

\smallskip 

(ii) Consider now the graph $\G_2$ given by the figure below:
\begin{center}
\begin{tikzpicture}[scale=1]\footnotesize
    \node (1) at (0,0) [draw,circle,inner sep=1pt,fill=black!100,label=270:{$v_3$}] {};
    \node (2) at (2,0) [draw,circle,inner sep=1pt,fill=black!100,label=270:{$v_5$}] {};
    \node (3) at (2,1) [draw,circle,inner sep=1pt,fill=black!100,label=90:{$v_4$}] {};
    \node (4) at (0,1) [draw,circle,inner sep=1pt,fill=black!100,label=90:{$v_2$}] {};
    \node (5) at (-1,0.5) [draw,circle,inner sep=1pt,fill=black!120,label=180:{$v_1$}] {};
    \path
    (1) edge  (2)
    (2) edge  (4)
    (3) edge  (1)
    (3) edge  (4)
    (4) edge  (1)
    (5) edge  (1)
    (5) edge  (4);
\end{tikzpicture}
\end{center}
Then it can be easily verified that $\n_{\G_2}$ carries an adapted complex structure $J_2$ given by
\[ J_2v_1=e_{2,3}, \quad J_2v_2=v_3, \quad J_2v_4=v_5, \quad J_2e_{1,2}=e_{1,3}, \quad J_2e_{2,4}=e_{3,4}, \quad J_2e_{2,5}=e_{3,5}. \]
\end{example}

\section{Consequences of the integrability condition} \label{sec:integrability}

In this section, we derive the first important structural results on graphs that admit an adapted complex structure; they will be a direct consequence of the integrability condition, that is, the vanishing of the Nijenhuis tensor \eqref{eq:N}.

Let $\G$ be a graph equipped with an adapted complex structure $J$, and assume that $u,v$ are vertices of $\G$ with $[u,v]\neq 0$ (that is, they are joined by the edge $\overline{uv}$). It follows from \eqref{eq:N} that 
\begin{equation}\label{eq: integrable}
    0 \neq J[u,v]=[Ju,v]+[u,Jv]+J[Ju,Jv].
\end{equation} 
Therefore, at least one of the three terms on the right-hand side of \eqref{eq: integrable} does not vanish. Moreover, the definition of an adapted complex structure implies that $J[u,v]$ must be precisely equal to one of the terms on the right-hand side of \eqref{eq: integrable}, and the other two have to cancel out. Consequently, we have the following possibilities: 
\begin{enumerate}
    \item[(P1)]  If $J[u,v]=J[Ju,Jv]$ and $[Ju,v]+[u,Jv]=0$ then we must have $v=Ju$ or $u=Jv$, that is, the edge $\overline{uv}$ is distinguished. Note that $[Ju,v]=[u,Jv]=0$.
    \item[(P2)] If $J[u,v]=[Ju,v]$ and $[u,Jv]+J[Ju,Jv]=0$, then $\pm Ju$ is a non-isolated vertex, say $w\in V$, with $w$ connected to $v$, and we have two cases:
    \begin{enumerate}
        \item $[u,Jv]=J[Ju,Jv]=0$; in this case, either $\pm Jv$ is an edge or $\pm Jv$ is a vertex not connected neither to $u$ nor $w$.
        \item $J[u,Jv]=[Ju,Jv]\neq 0$; in this case, $\pm Jv$ is a vertex connected to both $u$ and $w$.
    \end{enumerate}
    \item[(P3)] If $J[u,v]=[u,Jv]$ and $[Ju,v]+J[Ju,Jv]=0$, then $\pm Jv$ is a non-isolated vertex, say $x\in V$, with $x$ connected to $u$, and we have two cases:
    \begin{enumerate}
        \item $[Ju,v]=J[Ju,Jv]=0$; in this case, either $\pm Ju$ is an edge or $\pm Ju$ is a vertex not connected neither to $v$ nor $x$.
        \item $J[Ju,v]=[Ju,Jv]\neq 0$; in this case, $\pm Ju$ is a vertex connected to both $v$ and $x$.
    \end{enumerate}
\end{enumerate}

\smallskip

As a consequence of (P1)--(P3) we have

\begin{corollary}\label{cor: distinguished}
The following statements are equivalent:
\begin{enumerate}
    \item[$\ri$] The edge $\overline{uv}$ is distinguished,
    \item[$\rii$] $[Ju,v]+[u,Jv]=0$,
    \item[$\riii$] $[Ju,v]=[u,Jv]=0$.
\end{enumerate}
\end{corollary}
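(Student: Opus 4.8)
The plan is to prove the cycle of implications $\ri\Rightarrow\riii\Rightarrow\rii\Rightarrow\ri$. Two of these are essentially immediate, and all the content sits in $\rii\Rightarrow\ri$, where the integrability condition does the work; this step also fills in the claim asserted (without proof) in case (P1), namely that the edge $\seg{uv}$ is distinguished.

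For $\ri\Rightarrow\riii$: if $\seg{uv}$ is distinguished, then after possibly interchanging $u$ and $v$ we have $v=Ju$, hence $Jv=J^2u=-u$. Therefore $[Ju,v]=[v,v]=0$ and $[u,Jv]=-[u,u]=0$, which is exactly $\riii$. The implication $\riii\Rightarrow\rii$ is trivial.

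For $\rii\Rightarrow\ri$: assume $[Ju,v]+[u,Jv]=0$. Since $\seg{uv}$ is an edge we have $[u,v]\neq 0$, so substituting into \eqref{eq: integrable} (this is precisely case (P1)) yields $J[u,v]=J[Ju,Jv]$, hence $[u,v]=[Ju,Jv]\neq 0$. I would then exploit the rigidity of the bracket of $\n_\G$: since the subspace spanned by the edges is central, a nonzero bracket $[Ju,Jv]$ is impossible if $Ju$ or $Jv$ is $\pm$ an edge; because $J$ is adapted, this forces $Ju=\varepsilon w$ and $Jv=\delta x$ for some vertices $w,x\in V$ and signs $\varepsilon,\delta\in\{\pm1\}$. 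Then $[Ju,Jv]=\varepsilon\delta\,(w\wedge x)=u\wedge v$, so by \eqref{eq:property-graph} (equivalently, by the linear independence of the bivectors $w\wedge x$ over the edges) we get $\{w,x\}=\{u,v\}$. The case $w=u$ is impossible, since it would give $Ju=\pm u$, contradicting $J^2=-\operatorname{Id}$; hence $w=v$, i.e. $Ju=\pm v$. If $Ju=v$ the edge $\seg{uv}$ is distinguished by definition; if $Ju=-v$, applying $J$ gives $Jv=u$, so again $\seg{uv}$ is distinguished. This establishes $\ri$.

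The only delicate point is the reduction step in $\rii\Rightarrow\ri$, where one pins down $Ju$ and $Jv$ as $\pm$ vertices and then identifies the pair $\{w,x\}$ with $\{u,v\}$; once the rigidity encoded in \eqref{eq:property-graph} and the definition of an adapted complex structure are invoked, the rest is elementary bookkeeping with the relation $J^2=-\operatorname{Id}$.
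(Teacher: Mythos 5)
Your proof is correct and follows essentially the same route as the paper: there the corollary is stated as an immediate consequence of the case analysis (P1)--(P3), and your argument for $\rii\Rightarrow\ri$ is precisely the justification of case (P1), using the centrality of the edges, adaptedness, and the rigidity property \eqref{eq:property-graph} to force $Ju=\pm v$ (hence $v=Ju$ or $u=Jv$). The only difference is that you spell out this detail, which the paper leaves implicit; the remaining implications are handled identically.
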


Some first results on adapted complex structures are the following. In all these results $\G=\G(V,E)$ denotes a graph that admits an adapted complex structure $J$.

\begin{lemma}
If the edge $a\in E$ is distinguished and $Ja\in E$ then $Ja$ is also distinguished.
\end{lemma}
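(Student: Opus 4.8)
The plan is to apply the trichotomy of alternatives (P1)--(P3) following equation \eqref{eq: integrable} to the two endpoints of the edge $b:=Ja$. Since $a$ is distinguished, write $a=\seg{vw}$ with $w=Jv$; observe that then $Jw=J^2v=-v$, and that, as an element of $\n_\G$, one has $a=v\wedge w=[v,w]$. By hypothesis $b=Ja$ is an edge, say $b=\seg{pq}$, so $[p,q]=p\wedge q\neq 0$; moreover $J[p,q]=Jb=J(Ja)=-a\neq 0$.

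First I would run the trichotomy on the pair $(p,q)$ (legitimate, since $[p,q]\neq 0$). Alternative (P1) states precisely that $\seg{pq}=b$ is distinguished, which is what we want, so it remains to rule out (P2) and (P3). In case (P2) we have $[Jp,q]=J[p,q]=-a\neq 0$; since $q\in V$ and edges are central, $Jp$ cannot be a multiple of an edge, hence $Jp=\varepsilon u$ for some vertex $u$ and $\varepsilon\in\{1,-1\}$, and $\varepsilon[u,q]=-a=[w,v]$. The rigidity property \eqref{eq:property-graph} then forces $(u,q)=(w,v)$ if $\varepsilon=1$ and $(u,q)=(v,w)$ if $\varepsilon=-1$; in either case, solving $Jp=\varepsilon u$ with the help of $J^{-1}=-J$ and $Jw=-v$ gives $p=q$, which is impossible for an edge. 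Case (P3) is symmetric: here $[p,Jq]=-a$, one writes $Jq=\varepsilon u$ with $u\in V$, property \eqref{eq:property-graph} pins down the pair $(p,u)$, and solving for $q$ forces $q$ to be $-v$ or $-w$, which is not a vertex --- a contradiction. Hence only (P1) can hold, so $Ja$ is distinguished. (The case $-Ja\in E$ is handled in exactly the same way.)

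The only delicate point --- and the reason the lemma is true --- is the sign bookkeeping: since $w=Jv$ gives $Jw=-v$ rather than $Jw=v$, the linear equations produced in alternatives (P2) and (P3) admit no solution with both endpoints honest vertices, and this is precisely what collapses those cases. Apart from a handful of uses of $J^2=-\operatorname{Id}$ and of \eqref{eq:property-graph}, no real computation is needed; the main thing to get right is not to confuse $J$ with $J^{-1}=-J$.
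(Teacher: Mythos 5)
Your proof is correct and follows essentially the same route as the paper: both apply the integrability trichotomy (P1)--(P3) to the endpoints of $Ja$ and use the rigidity property \eqref{eq:property-graph} to rule out the cases where $-a$ equals $[Jp,q]$ or $[p,Jq]$ (your $p=q$, respectively $q=-v$ or $-w$, contradictions are the same obstruction the paper phrases as $y=\pm x$), leaving only the case in which $[Jp,q]+[p,Jq]=0$, i.e.\ $Ja$ is distinguished. The only cosmetic difference is that you invoke (P1) directly where the paper cites Corollary \ref{cor: distinguished}, which encodes the same fact.
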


\begin{proof}
We may assume that $a=[v,Jv]$ and $Ja=[x,y]$. Then it follows from \eqref{eq:N} that
\begin{equation}\label{eq: distinguished}
-[v,Jv]=[Jx,y]+[x,Jy]+J[Jx,Jy].
\end{equation}
We first note from $\eqref{eq: distinguished}$ that both $Jx$ and $Jy$ are, up to sign, non-isolated vertices of $\G$. Indeed, if, say, $Jx$ were a central element in $\n_\G$, it would follow from \eqref{eq: distinguished} that $[Jv,v]=[x,Jy]\neq 0$. Due to \eqref{eq:property-graph} we would have $y=\pm x$ in $\n_\G$, which is impossible. 

Next, it follows from \eqref{eq: distinguished} and the definition of the Lie bracket on $\n_\G$ that $[Jv,v]$ has to be equal to one of the summands on the right-hand side of $\eqref{eq: distinguished}$, while the other two summands must cancel. But, as before, if we had $[Jv,v]=[Jx,y]$ or $[Jv,v]=[x,Jy]$ we would arrive at a contradiction. Therefore, the only possibility is $[Jv,v]=J[Jx,Jy]$ and $[Jx,y]+[x,Jy]=0$, and it follows from Corollary \ref{cor: distinguished} that $Ja=[x,y]$ is a distinguished edge. 
\end{proof}

As some kind of converse, we show the next result.

\begin{lemma}
Assume that $a$ and $b$ are two non-adjacent edges. If $Ja=b$ then both $a$ and $b$ are distinguished.
\end{lemma}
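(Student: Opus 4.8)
The plan is to write $a=[u,v]$ for vertices $u,v\in V$ joined by an edge, so that $Ja=b\in E$ can be written $b=[x,y]$ for suitable vertices $x,y$, and then to exploit the integrability identity \eqref{eq:N} applied to the pair $(x,y)$ exactly as in the proof of the previous lemma. Since $J^2=-\operatorname{Id}$, we have $J[x,y]=J b = -a = -[u,v]$, so $0\neq J[x,y]=[Jx,y]+[x,Jy]+J[Jx,Jy]$, and the adaptedness of $J$ forces $-[u,v]$ to coincide with exactly one of the three summands on the right while the other two cancel. First I would rule out, again via the strong property \eqref{eq:property-graph}, that $Jx$ or $Jy$ is (up to sign) a central element: if, say, $\pm Jx\in W_1$ or $\pm Jx$ is an isolated vertex, then $[Jx,y]=0$ and $[Jx,Jy]=0$, which collapses the identity to $-[u,v]=[x,Jy]$, and then \eqref{eq:property-graph} gives $Jy=\pm u$ with $x=\pm v$ (or vice versa), contradicting $J^2=-\operatorname{Id}$ together with the fact that we would be forcing a relation incompatible with $a,b$ being genuine distinct edges; in any case one gets a contradiction exactly as in the preceding lemma. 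Hence $\pm Jx$ and $\pm Jy$ are both non-isolated vertices of $\G$.

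Next I would eliminate the two ``mixed'' possibilities $-[u,v]=[Jx,y]$ and $-[u,v]=[x,Jy]$. Here is where the hypothesis that $a$ and $b$ are \emph{non-adjacent} enters, which is the key new ingredient compared to the previous lemma. Suppose $-[u,v]=[Jx,y]$ (the other case is symmetric). By \eqref{eq:property-graph}, $\{Jx,y\}=\{u,v\}$ as a set of vertices (up to signs); in particular $y\in\{\pm u,\pm v\}$, so the edge $b=[x,y]$ shares the vertex $y$ with the edge $a=[u,v]$ — that is, $a$ and $b$ are adjacent, contradicting the hypothesis. Therefore the only surviving possibility is $-[u,v]=J[Jx,Jy]$ together with $[Jx,y]+[x,Jy]=0$. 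By Corollary~\ref{cor: distinguished} (applied to the edge $b=\seg{xy}$), the identity $[Jx,y]+[x,Jy]=0$ says precisely that $b$ is distinguished. Finally, applying $J$ once more to $-[u,v]=J[Jx,Jy]$ gives $[u,v]=[Jx,Jy]$, i.e. $a=[Jx,Jy]$; since $\pm Jx,\pm Jy$ are vertices, $a$ is an edge joining $\pm Jx$ and $\pm Jy$, and $b=\seg{xy}$ distinguished means $y=\pm Jx$ (equivalently $x=\pm Jy$), whence $Jy=\mp x$ and $Jx = \pm y$... more carefully: $b$ distinguished means one endpoint of $b$ is $J$ of the other, say $x=Jv'$ for the other endpoint; translating, $a=[Jx,Jy]$ with one of $Jx,Jy$ equal to $J$ of the other, so $a$ is itself distinguished. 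I would spell this last identification out cleanly: $b$ distinguished $\Rightarrow$ (up to sign) $y=Jx$, hence $Jy=J^2x=-x$, so $a=[Jx,Jy]=[y,-x]=[x,y]$... — let me instead phrase it as: $a=[Jx,Jy]$ and $Jx = \pm J(J^{-1}y)$; the cleanest route is to note that $J$ maps the edge $b$ (distinguished) to the edge $a$ and two endpoints related by $J$ in $b$ map to two endpoints related by $J$ in $a$, so $a$ is distinguished.

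I expect the main obstacle to be the bookkeeping of signs in that final identification and, more substantively, making airtight the claim that the two mixed cases genuinely produce adjacency: one must be careful that \eqref{eq:property-graph}, which is stated for nonzero brackets of the form $[v_i,v_j]$, applies when one of the arguments is $Jx$ rather than a bare vertex — but this is fine precisely because we first established $\pm Jx,\pm Jy\in V$, so $[Jx,y]=\pm[\,\text{vertex},\text{vertex}\,]$ and \eqref{eq:property-graph} is legitimately applicable (absorbing signs into the identification $v_i\wedge v_j\leftrightarrow e_{i,j}$). Modulo these routine checks, the argument is a direct adaptation of the previous lemma's proof with the non-adjacency hypothesis doing exactly the work needed to kill the mixed terms.
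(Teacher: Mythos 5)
Your first half is sound and is in fact the paper's argument run on the pair $(x,y)$ instead of $(u,v)$: adaptedness forces $Jb=-a$ to equal exactly one of the three summands of $N_J(x,y)=0$, the two mixed options $-[u,v]=[Jx,y]$ and $-[u,v]=[x,Jy]$ die by \eqref{eq:property-graph} together with non-adjacency, and the surviving option gives $[Jx,y]+[x,Jy]=0$, so $b$ is distinguished by Corollary~\ref{cor: distinguished}. (The preliminary digression about $Jx$ or $Jy$ being central is unnecessary, and the contradiction there is again adjacency via \eqref{eq:property-graph}, not anything to do with $J^2=-\operatorname{Id}$; but this is harmless.)

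The genuine gap is the last step, where you claim $a$ is distinguished. Applying $J$ to $-[u,v]=J[Jx,Jy]$ gives $-J[u,v]=-[Jx,Jy]$, i.e.\ $b=[Jx,Jy]$, \emph{not} $a=[Jx,Jy]$; and once $b$ is known to be distinguished (say $y=\pm Jx$, so $Jy=\mp x$), the relation $[Jx,Jy]=[x,y]=b$ holds automatically, so this equation is a tautology and carries no information whatsoever about $u$ and $v$. Your fallback principle, that $J$ sends the two endpoints of $b$ to the two endpoints of $a$, is also unjustified: $J$ is a linear map on $\n_\G$, and its value on the basis element $x\wedge y\in W_1$ is completely independent of its values on the vertices $x,y$, so nothing ``functorial'' of this kind is available (and the result that would partially substitute for it, Proposition~\ref{prop: endpoint}, concerns non-distinguished edges and comes later). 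The computation with $N_J(x,y)$ alone cannot yield the conclusion for $a$; you must run the symmetric argument, applying $N_J(u,v)=0$: since $Ja=b=[x,y]$ cannot equal $[Ju,v]$ or $[u,Jv]$ (again by \eqref{eq:property-graph} and non-adjacency), one gets $[Ju,v]+[u,Jv]=0$, hence $a$ is distinguished by Corollary~\ref{cor: distinguished}. This ``interchange the roles of $a$ and $b$'' step is exactly how the paper closes the proof; alternatively, having shown $b$ distinguished with $Jb=-a$ an edge up to sign, you could invoke the preceding lemma (a distinguished edge whose image is an edge has distinguished image), but some second step of this kind is indispensable.
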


\begin{proof}
Suppose $a=\overline{uv}$ with $[u,v]=a$ and $b=\overline{xy}$ with $[x,y]=b$. The hypothesis means that $\{u,v\}\cap \{x,y\}=\emptyset$. On the other hand, from the integrability condition \eqref{eq: integrable} we get
\[ [x,y]=b=Ja=J[u,v]=[Ju,v]+[u,Jv]+J[Ju,Jv]. \]
If $[x,y]=[Ju,v]$ or $[x,y]=[u,Jv]$ then $x$ or $y$ must be equal to $u$ or $v$, a contradiction. Therefore, $[x,y]=J[Ju,Jv]$ and $[Ju,v]+[u,Jv]=0$, and this means that $a=[u,v]$ is distinguished. 

Interchanging the roles of $a$ and $b$ we obtain that $b$ is distinguished, as well.
\end{proof}

\begin{lemma}\label{lem:central}
If $w$ is a vertex in $\G$ such that $Jw$ is a central element in $\n_\G$, then the neighborhood $N_w$ is $J$-invariant, thus $w$ has even degree.

Moreover, if $Jw$ is, up to sign, an edge then this edge is distinguished.
\end{lemma}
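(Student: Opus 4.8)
The plan is to run the integrability identity \eqref{eq:N} against two elementary features of $\n_\G$: every Lie bracket lands in the commutator ideal $W_1$, and $\n_\G=W_0\oplus W_1$ as vector spaces, so one may project onto $W_0$ along $W_1$. Throughout I write $P_0$ for that projection.

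For the first assertion, take $u\in N_w$, so that $[u,w]$ is the edge $\seg{uw}\ne 0$. Since $Jw$ is central we have $[u,Jw]=[Ju,Jw]=0$, and \eqref{eq:N} collapses to $J[u,w]=[Ju,w]$; as $J$ is injective this is nonzero, so $[Ju,w]\ne 0$. An edge lies in $W_1$ and is therefore central, so adaptedness forces $Ju=\varepsilon u'$ for a genuine vertex $u'$ and a sign $\varepsilon$, and $[Ju,w]\ne 0$ says precisely that $u'$ is adjacent to $w$, i.e. $u'\in N_w$. Hence $J$ preserves $\operatorname{span}(N_w)$. The induced map $u\mapsto u'$ on the finite set $N_w$ is an involution (apply $J$ a second time and use $J^2=-\operatorname{Id}$) with no fixed point (a fixed point would give $Ju=\pm u$, which is impossible), so $\deg(w)=|N_w|$ is even.

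For the final statement, write $Jw=\varepsilon\,a$ with $a=\seg{xy}=[x,y]\in E$ and $\varepsilon=\pm1$; then $Ja=-\varepsilon w$, and substituting $[x,y]$ into \eqref{eq:N} gives
\[
-\varepsilon w=[Jx,y]+[x,Jy]+J[Jx,Jy].
\]
The first two summands lie in $W_1$, so applying $P_0$ yields $-\varepsilon w=P_0\big(J[Jx,Jy]\big)\ne 0$; in particular $[Jx,Jy]\ne 0$, hence $Jx$ and $Jy$ are, up to sign, adjacent vertices, say $Jx=\varepsilon_1x'$ and $Jy=\varepsilon_2y'$. Then $P_0\big(J[x',y']\big)=\pm w\ne 0$, so adaptedness forces $J$ to send the edge $\seg{x'y'}$ to a vertex (up to sign), whence $J[Jx,Jy]\in W_0$. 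Projecting the displayed identity onto $W_1$ now gives $[Jx,y]+[x,Jy]=0$, and Corollary \ref{cor: distinguished} shows that $\seg{xy}$ is distinguished.

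The only genuine subtlety is the repeated use of the adaptedness dichotomy ``vertex versus edge'', which has to be applied at each step to the correct element; this is kept under control by the observation that edges span the commutator ideal $W_1$ while $w$ lives in the complementary subspace $W_0$, so that projecting onto $W_0$ isolates exactly the vertex-valued contributions and makes each alternative easy to rule in or out.
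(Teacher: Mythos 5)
Your proof is correct and follows essentially the same route as the paper: the first part pairs the neighbors of $w$ via the Nijenhuis identity with $Jw$ central (your fixed-point-free involution is just a cleaner packaging of the paper's iterative pairing), and the second part projects $-\varepsilon w=[Jx,y]+[x,Jy]+J[Jx,Jy]$ onto $W_0\oplus W_1$ and uses adaptedness to force $[Jx,y]+[x,Jy]=0$, exactly as the paper does before invoking Corollary \ref{cor: distinguished}. No gaps.
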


\begin{proof}
If $\deg(w)=0$ there is nothing to prove. Assume now that $\deg(w)\geq 1$, and let $w_1\in N_w$, with $a_1:=\overline{ww_1}=\pm [w,w_1]$. Since $Jw$ is central in $\n_\G$, it follows from $N_J(w,w_1)=0$ that 
\[   [w,w_1]+J[w,Jw_1]=0, \]
that is, $J[w,w_1]=[w,Jw_1]\neq 0$. Hence, $w_2:=Jw_1$ is, up to sign, another vertex in $\G$ which is connected to $w$ through the edge $a_2:=\pm Ja_1$. If $\deg(w)=2$ we are done. If $\deg(w)>2$, given another vertex $w_3$ connected to $w$, with the same computation as above we can find yet another vertex $w_4$ connected to $w$ with $Jw_3=w_4$. Repeating this process, we will eventually find that the set of vertices joined to $w$ is of the form $\{w_1,Jw_1,\ldots, w_k,Jw_k\}$, thus it is $J$-invariant and $\deg(w)$ is even.

For the last statement, assume that $Jw$ is the edge that joins the vertices $u$ and $v$, with $Jw=[u,v]$. It follows from \eqref{eq:N} that
\[ -w=[Ju,v]+[u,Jv]+J[Ju,Jv].\]
Since the complex structure $J$ is adapted and $w$ is a vertex, we must have $[Ju,v]+[u,Jv]=0$, which implies that the edge joining $u$ and $v$ is distinguished, according to Corollary \ref{cor: distinguished}.
\end{proof}

\begin{remark}\label{rem:different-components}
The first part of Lemma \ref{lem:central} can be generalized in the following way: if $w$ is a vertex in $\G$ such that $Jw$ is another vertex of $\G$ and $w$ and $Jw$ are in different connected components of $\G$ then both $w$ and $Jw$ have even degree. The proof is exactly the same: if $w_1\in N_w$ then $[Jw,w_1]=0$ and conditions (P1)--(P3) above imply $[w,w_1]+J[w,Jw_1]=0$, and we continue the argument as in the proof of the lemma.
\end{remark}

\begin{corollary}\label{cor: isolated}
If $v$ is an isolated vertex in $\G$ then $Jv$ is, up to sign, either an even vertex or a distinguished edge.
\end{corollary}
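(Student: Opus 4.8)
The plan is to use the defining property of an adapted complex structure to split into two cases according to whether $\pm Jv$ is a vertex or an edge, and then invoke Lemma \ref{lem:central} and Remark \ref{rem:different-components}. Since $v\in V\subset\n_\G$ and $J$ is adapted, exactly one of the following holds: $\pm Jv$ is an edge, or $\pm Jv$ is a vertex.

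Suppose first that $\pm Jv$ is an edge $a\in E$. Every edge of $\G$ is a central element of $\n_\G$, so $Jv$ is central; applying the second (``Moreover'') part of Lemma \ref{lem:central} with $w=v$ then shows that $a$ is distinguished, which is one of the two desired conclusions. Note that this case does not even use that $v$ is isolated.

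Now suppose $\pm Jv$ is a vertex $w\in V$. If $w$ is isolated, then $\deg(w)=0$ is even and we are done. Otherwise $w$ is non-isolated; since $v$ is isolated, its connected component is the singleton $\{v\}$, so $v$ and $w$ lie in different connected components of $\G$. From $J^2=-\operatorname{Id}$ we get $Jw=\mp v$, which is again (up to sign) a vertex, so Remark \ref{rem:different-components} applies and yields that both $w$ and $v$ have even degree. In particular $Jv=\pm w$ is, up to sign, an even vertex, completing the proof.

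Once Lemma \ref{lem:central} and Remark \ref{rem:different-components} are available, the argument is purely a matter of organizing the cases, so I do not anticipate a genuine obstacle; the only point requiring a little care is the degenerate subcase in which $Jv$ is itself an isolated vertex, which is harmless because its degree is $0$, and this is precisely why the statement allows ``even vertex'' rather than ``non-isolated even vertex.''
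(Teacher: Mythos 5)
Your proof is correct and relies on exactly the ingredients the paper intends: the corollary is left without proof because it follows from Lemma \ref{lem:central} (whose ``Moreover'' part settles the edge case) together with the even-degree statement of that lemma or its extension in Remark \ref{rem:different-components} for the vertex case. The only cosmetic difference is that in the vertex case you could skip the connected-component argument and the isolated subcase altogether by applying the first part of Lemma \ref{lem:central} directly to the vertex $w=\pm Jv$, since $Jw=\mp v$ is central in $\n_\G$ precisely because $v$ is isolated.
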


The following result will be crucial in what follows.

\begin{proposition}\label{prop: endpoint}
Let $a\in E$ be a non-distinguished edge in $\G$, joining the vertices $x$ and $y$. Then $Ja$ is, up to sign, another non-distinguished edge in $\G$ that is adjacent to $a$; more precisely, at least one of $Jx$ or $Jy$ is, up to sign, a non-isolated vertex, and $Ja$ joins $x$ with $Jy$, or $y$ with $Jx$. 
\end{proposition}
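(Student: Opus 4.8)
The plan is to apply the integrability condition directly to the edge $a$ and then analyze the resulting possibilities using the trichotomy (P1)--(P3) already established. Write $a=\seg{xy}$ with $[x,y]=a\neq 0$. Since $a$ is \emph{not} distinguished, case (P1) is excluded by Corollary \ref{cor: distinguished}, so we are in case (P2) or (P3). By symmetry in $x$ and $y$, assume we are in (P2): then $Ja=J[x,y]=[Jx,y]$, $\pm Jx$ is a non-isolated vertex $w\in V$ adjacent to $y$, and $Ja=[w,y]=\pm[Jx,y]$, which is an edge joining $\pm Jx$ with $y$. In particular $Ja$ is, up to sign, an edge, and it shares the vertex $y$ with $a=\seg{xy}$, hence $Ja$ is adjacent to $a$. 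The case (P3) is completely analogous and yields that $Ja$ joins $x$ with $\pm Jy$, where $\pm Jy$ is a non-isolated vertex. This gives exactly the stated conclusion.

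The one point that needs a little care is ruling out the degenerate situation where $Ja$ could fail to be an actual edge of the \emph{graph} as opposed to a generic element of $W_1$, or where the vertex produced ($\pm Jx$ resp.\ $\pm Jy$) might coincide with $y$ resp.\ $x$ and collapse the bracket. For the first: in (P2) we have $Ja=[Jx,y]$ with $Jx=\pm w$ for a vertex $w$; since this bracket is nonzero, property \eqref{eq: corchete} of the graph Lie bracket forces $w$ and $y$ to be joined by an edge, so $[w,y]=\pm\seg{wy}$ is genuinely an edge of $\G$. For the second: if $\pm Jx = y$ then the edge $\seg{xy}=a$ would itself be distinguished, contradicting our hypothesis; likewise $\pm Jy=x$ is excluded. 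So the vertex $\pm Jx$ (resp.\ $\pm Jy$) is distinct from both endpoints of $a$ in the relevant case, and $Ja$ is a bona fide edge adjacent to $a$ through exactly one shared endpoint.

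It remains only to observe that the ``at least one of $Jx$ or $Jy$'' phrasing is automatic: whichever of (P2) or (P3) we land in produces the corresponding non-isolated vertex ($\pm Jx$ in case (P2), $\pm Jy$ in case (P3)), and since (P1) is excluded one of these two must occur. Thus I would structure the write-up as: (1) reduce to (P2)/(P3) via Corollary \ref{cor: distinguished}; (2) in case (P2), extract $Ja=\pm\seg{wy}$ with $w=\pm Jx$ and note $w\neq x,y$; (3) observe case (P3) is symmetric; (4) conclude. The main obstacle is essentially bookkeeping with the signs (the $\pm$ coming from the $v\wedge w$ versus $w\wedge v$ identification in \eqref{eq: corchete} and from the definition of adapted), but no substantive difficulty arises beyond invoking \eqref{eq: corchete} and Corollary \ref{cor: distinguished}.
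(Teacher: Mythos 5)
Your argument is correct and takes essentially the same route as the paper: both exclude (P1) via Corollary \ref{cor: distinguished} and then read the conclusion directly off the (P2)/(P3) alternatives coming from the integrability identity \eqref{eq: integrable}, including the adjacency of $Ja$ to $a$ through the shared endpoint. The paper merely spells out explicitly the subcases (whether $Jx$ or $Jy$ is central, which of $[Jx,y]$, $[x,Jy]$ vanish) that your direct appeal to (P2)/(P3) packages in one step.
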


\begin{proof}
We know that either $[Jx,y]\neq 0$ or $[x,Jy]\neq 0$, since otherwise the edge $a$ joining $x$ and $y$ would be distinguished (Corollary \ref{cor: distinguished}). 

Assume first that $Jx$ is a central element in $\n_\G$, and hence $[Jx,y]=0$. This implies that $[x,Jy]\neq 0$ and, therefore, $Jy$ is, up to sign, a non-isolated vertex of $\G$. Moreover, conditions (P1)--(P3) imply  
\[ J[x,y]=[x,Jy], \]
therefore $Ja$ is, up to sign, another edge with $x$ and $Jy$ as endpoints.

We may now assume that both $Jx$ and $Jy$ are, up to sign, non-isolated vertices in $\G$, with $Jx\neq \pm y$. If $[Jx,y]\neq 0$ and $[x,Jy]=0$ then conditions (P1)--(P3) imply that $[Jx,Jy]=0$ and $J[x,y]=[Jx,y]$. The latter says that $Ja$ is, up to sign, another edge with $y$ and $Jx$ as endpoints. If $[x,Jy]\neq 0$ and $[Jx,y]=0$ then, as before, we obtain that $[Jx,Jy]=0$ and $J[x,y]=[x,Jy]$. Therefore, in this case we have that $Ja$ is, up to sign, another edge with $x$ and $Jy$ as endpoints. Finally, if $[Jx,y]\neq 0$ and $[x,Jy]\neq 0$ then $[Jx,Jy]\neq 0$ and we have two possibilities:
\begin{enumerate}
    \item[$\ri$] $J[x,y]=[Jx,y]$ and $J[Jx,Jy]=-[x,Jy]$,
    \item[$\rii$] $J[x,y]=[x,Jy]$ and $J[Jx,Jy]=-[Jx,y]$.
\end{enumerate}
In (i), $Ja$ has endpoints $Jx$ and $y$, and in (ii), $Ja$ has endpoints $x$ and $Jy$.
\end{proof}

\begin{corollary} \label{cor: same parity}
If $v,w$ are vertices of $\G$ with $w=Jv$ then $\deg(v)$ and $\deg(w)$ have the same parity.
\end{corollary}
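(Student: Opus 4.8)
The plan is to compare $\deg(v)$ and $\deg(w)$, where $w=Jv$, by sorting the edges at each of these two vertices according to the behaviour of $J$; the crucial input is Proposition \ref{prop: endpoint}. Observe first that $v\neq w$ (since $Jv\neq -v$) and that, up to sign, $Jw=v$, so the roles of $v$ and $w$ are symmetric throughout.

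First I would partition the set $E_v$ of edges incident to $v$ into three parts: the set $D_v$ of distinguished edges at $v$; the set $C_v$ of non-distinguished edges $a$ at $v$ such that $Ja$ is, up to sign, an edge at $v$; and the set $B_v$ of non-distinguished edges $a$ at $v$ such that $Ja$ is, up to sign, an edge at $w$. This is where Proposition \ref{prop: endpoint} is used: if $a=\seg{vu}$ is non-distinguished then $u\neq w$ (otherwise $a=\seg{v,Jv}$ would be distinguished), and the proposition forces $Ja$ to be, up to sign, an edge whose endpoint set is $\{v,Ju\}$ or $\{u,w\}$. Since $u\neq v$, these two possibilities describe distinct edges, one incident to $v$ but not $w$ and the other to $w$ but not $v$; hence each non-distinguished edge at $v$ lies in exactly one of $C_v$ and $B_v$, and $\deg(v)=|D_v|+|C_v|+|B_v|$.

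Then I would analyse the three parts in turn. For $D_v$: a distinguished edge at $v$ joins $v$ to a vertex $p$ with $p=Jv$ or $v=Jp$; the latter forces $Jv=-p$, i.e.\ $w=-p$, which is impossible for two vertices, so $p=w$. Thus $D_v\subseteq\{\seg{vw}\}$, and the same computation at $w$ gives $D_w\subseteq\{\seg{vw}\}$; since $\seg{vw}$ denotes a single edge, $D_v=D_w$. For $C_v$: if $a\in C_v$ then $Ja$ is again a non-distinguished edge at $v$ --- it is non-distinguished because $J(Ja)=-a\in E$, so if $Ja$ were distinguished, the lemma asserting that a distinguished edge whose $J$-image is again an edge has distinguished $J$-image would force $a$ distinguished --- hence $Ja\in C_v$. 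Therefore $J$ induces a fixed-point-free involution on $C_v$ ($Ja=\pm a$ is impossible, since then $J^2a=a\neq -a$), whence $|C_v|$ is even, and likewise $|C_w|$ is even. For $B_v$: if $a\in B_v$ then, by the same reasoning, $Ja$ is a non-distinguished edge at $w$ while $J(Ja)=-a$ is an edge at $v$, so $Ja\in B_w$; this gives a bijection $B_v\to B_w$, whence $|B_v|=|B_w|$. Combining, $\deg(v)=|D_v|+|C_v|+|B_v|\equiv|D_w|+|B_w|\equiv\deg(w)\pmod 2$.

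The step I expect to be the main obstacle is making the three-way split of $E_v$ airtight: one must verify that Proposition \ref{prop: endpoint} really forbids $Ja$ from meeting neither $v$ nor $w$, secure the exclusivity (which, since $u\neq v$, amounts to noting $Ja\neq\seg{vw}$), and keep the sign ambiguity of $J$ acting on edges under control throughout. The apparently degenerate configurations require no separate treatment: if $v$ is isolated then $\deg(v)=0$ and $w=Jv$ is an even vertex by Corollary \ref{cor: isolated}, while if $v$ and $w$ lie in different connected components then both have even degree by Remark \ref{rem:different-components}; in all remaining cases the argument above applies verbatim.
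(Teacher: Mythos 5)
Your proof is correct, but it runs the parity count through a different decomposition than the paper's. The paper splits the \emph{neighborhoods}: it writes $N_v=N_v^+\cup N_v^-$ and $N_w=N_w^+\cup N_w^-$ according to whether a neighbor of one vertex is also adjacent to the other, notes $N_v^+=N_w^+=N_v\cap N_w$, and shows that $N_v^-$ and $N_w^-$ are $J$-invariant, hence of even size; for that it uses Proposition \ref{prop: endpoint} together with the extra consequence of (P1)--(P3) that $[w,Jx]=0$ when $J(\seg{vx})=\pm\,\seg{vJx}$, so that the vertex $Jx$ stays in $N_v^-$. You instead sort the \emph{edges} incident to $v$ by where $J$ sends them ($D_v$, $C_v$, $B_v$), get $|C_v|$ even from a fixed-point-free involution, and match $B_v$ with $B_w$ and $D_v$ with $D_w$. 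These partitions are genuinely different: an edge $\seg{vx}$ with $x\in N_v\cap N_w$ can land in either $C_v$ or $B_v$ (compare the two choices of $J$ on $C_4$), so your classes are not the paper's classes relabelled. What your route buys is that you never need the bracket identity $[w,Jx]=0$, and the possible distinguished edge $\seg{vw}$ is handled explicitly via $D_v=D_w$ --- in the paper's split, when $v$ and $w$ are adjacent the vertex $w$ lies in $N_v^-$ although $\seg{vw}$ is distinguished, a borderline case the paper's wording passes over and which is harmless only because it occurs symmetrically on both sides. What the paper's route buys is brevity: comparing neighborhoods avoids your three-way case analysis and most of the sign bookkeeping for $J$ acting on edges. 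Both arguments rest on the same key input, Proposition \ref{prop: endpoint}, with Corollary \ref{cor: isolated} covering the isolated case.
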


\begin{proof}
If $\deg(v)=0$, that is, $v$ is an isolated vertex, then it follows from Corollary \ref{cor: isolated} that $\deg(w)$ is even, hence the statement holds in this case. 

We may then assume that $\deg(v)\geq 1$ and $\deg(w)\geq 1$, so that the neighborhoods $N_v$ and $N_w$ are non-empty. We decompose each of these neighborhoods as disjoint unions $N_v=N_v^+\cup N_v^-$ and $N_w=N_w^+\cup N_w^-$, where
\begin{gather*}
    N_v^+=\{x\in N_v \mid \text{there is an edge between } x \text{ and } w\},\\
    N_v^-=\{x\in N_v \mid \text{there is no edge between } x \text{ and } w\}, 
\end{gather*}
and similarly,
\begin{gather*}
    N_w^+=\{x\in N_w \mid \text{there is an edge between } x \text{ and } v \},\\
    N_w^-=\{x\in N_w \mid \text{there is no edge between } x \text{ and } v \}.
\end{gather*}
Clearly, $N_v^+=N_w^+=N_v\cap N_w$. Therefore, it is enough to show that $|N_v^-|$ and $|N_w^-|$ have the same parity. More precisely, we will show that both $N_v^-$ and $N_w^-$ are $J$-invariant subsets of $\n_\G$, hence $|N_v^-|$ and $|N_w^-|$ are even.

Let $x\in N_v^-$. Since $a:=\overline{vx}$ is a non-distinguished edge, it follows from Proposition \ref{prop: endpoint} that $Ja$ is, up to sign, another non-distinguished edge, connecting either $v$ with $Jx$ or $x$ with $w$. The latter contradicts the fact that $x\in N_v^-$, so $Ja$ is, up to sign, the edge $\overline{vJx}$. Furthermore, conditions (P1)--(P3) imply that $[w,Jx]=0$, therefore, $Jx$ is not connected to $w$ and thus $Jx\in N_v^-$. We have proved that $N_v^-$ is $J$-invariant, and the same holds for $N_w^-$, so the proof is complete. 
\end{proof}

As a consequence of Lemma \ref{lem:central} and Corollary \ref{cor: same parity} we obtain:

\begin{corollary}\label{cor:odd}
If $w$ is an odd vertex in $\G$ then $Jw$ is another vertex in $\G$ with odd degree. In particular, the set of odd vertices of $\G$ is $J$-invariant. 
\end{corollary}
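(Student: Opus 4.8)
The statement to prove is Corollary~\ref{cor:odd}: if $w$ is an odd vertex in $\G$ then $Jw$ is another vertex of odd degree, and hence the set of odd vertices is $J$-invariant.

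The plan is to combine the two results the corollary explicitly cites: Lemma~\ref{lem:central} and Corollary~\ref{cor: same parity}. First I would determine what kind of object $Jw$ is. By the definition of an adapted complex structure, $Jw$ is, up to sign, either a vertex or an edge of $\G$. I would rule out the edge case using Lemma~\ref{lem:central}: if $Jw$ were (up to sign) an edge, then $Jw$ would be a central element of $\n_\G$, and Lemma~\ref{lem:central} would force $N_w$ to be $J$-invariant, hence $\deg(w)$ even, contradicting the hypothesis that $w$ is odd. Therefore $Jw$ must be, up to sign, a vertex of $\G$.

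Once $Jw$ is known to be a vertex, Corollary~\ref{cor: same parity} applies directly: since $w$ and $Jw$ are both vertices with $Jw = J(w)$, their degrees have the same parity, so $\deg(Jw)$ is odd. (A small point to note: $Jw \neq \pm w$ since $J$ has no real eigenvalues, so $Jw$ is genuinely \emph{another} vertex.) This gives the first assertion. For the "in particular" clause, let $O$ denote the set of odd vertices; we have just shown $J(O) \subseteq O$ (identifying a vertex with its negative as needed, which does not affect degree). Since $J$ is invertible with $J^{-1} = -J$ also an adapted complex structure — or simply since $J$ restricted to the finite set $O$ is injective — we get $J(O) = O$, i.e. $O$ is $J$-invariant.

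This corollary is essentially a short bookkeeping argument, so there is no serious obstacle; the only thing requiring a moment's care is the case analysis on the type of $Jw$ and invoking the correct earlier lemma to exclude the edge case. The substantive work was already done in establishing Proposition~\ref{prop: endpoint}, Corollary~\ref{cor: same parity}, and Lemma~\ref{lem:central}; here one simply assembles them.
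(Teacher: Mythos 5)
Your proposal is correct and follows exactly the route the paper intends: the paper states this corollary without a separate proof, as an immediate consequence of Lemma \ref{lem:central} (ruling out the possibility that $Jw$ is central, i.e.\ an edge, since that would force $\deg(w)$ even) and Corollary \ref{cor: same parity} (giving equal parity of $\deg(w)$ and $\deg(Jw)$, which also excludes $Jw$ being isolated). Your assembly of these two results, including the remark on signs and on $Jw\neq\pm w$, is precisely the intended argument.
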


\medskip

We next prove a restriction for the existence of adapted complex structures.

\begin{proposition}\label{prop: vertices impares lejos}
Let $\G$ be a graph equipped with an adapted complex structure $J$. If $v$ and $w$ are vertices with $w=Jv$ and $d(v,w)\geq 3$ then both $v$ and $w$ are even vertices.
\end{proposition}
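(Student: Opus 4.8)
The statement is the contrapositive-friendly claim that if $w = Jv$ and at least one of $v, w$ is odd, then $d(v,w) \le 2$. By Corollary \ref{cor:odd}, if $v$ is odd then $w = Jv$ is also odd, so we may assume \emph{both} $v$ and $w$ are odd vertices, and we must show $d(v,w) \le 2$. The plan is to argue by contradiction: suppose $d(v,w) \ge 3$. Then $v$ and $w$ are not adjacent (else $d=1$) and have no common neighbor (else $d=2$); in the notation of the proof of Corollary \ref{cor: same parity}, this means $N_v = N_v^-$ and $N_w = N_w^-$, i.e. every neighbor $x$ of $v$ satisfies $[w,x]=0$, and symmetrically for $w$.

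**Key step.** I would then run the same neighborhood-matching argument as in Corollary \ref{cor: same parity}, but now track where the edges actually go. Fix $x \in N_v$, so $a := \seg{vx}$ is a non-distinguished edge (distinguished would force $x = w$, contradicting $d(v,w)\ge 3$). By Proposition \ref{prop: endpoint}, $Ja$ is, up to sign, an edge adjacent to $a$, joining either $v$ with $Jx$, or $x$ with $Jv = w$. The second option is impossible since $x \notin N_w$ (as $d(v,w)\ge 3$), so $Ja = \pm\seg{v\,Jx}$, and in particular $Jx$ is, up to sign, a \emph{non-isolated vertex} adjacent to $v$, i.e. $Jx \in N_v$ (after fixing signs). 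Moreover conditions (P1)--(P3) give $[w,Jx] = 0$, consistent with $Jx \in N_v = N_v^-$. Thus $J$ restricts to a fixed-point-free involution on $N_v$ (fixed-point-free because $Jx = \pm x$ would force $x$ to lie in a $J$-invariant line, but $x$ is a vertex and $Jx$ is, up to sign, a vertex distinct from $\pm x$ — here one uses that $x$ has degree $\ge 1$ so $Jx$ cannot equal $\pm x$ by the usual parity/graph argument, or more directly: if $Jx = x$ then $J^2 x = x \ne -x$). Hence $|N_v| = \deg(v)$ is even, contradicting that $v$ is odd.

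**Main obstacle.** The subtle point — and the step I expect to require the most care — is verifying that $J$ genuinely acts as a \emph{map} $N_v \to N_v$, i.e. that $Jx$ (with the correct sign) is literally an element of $V$ lying in $N_v$, and that it is an \emph{involution without fixed points} on this set. Proposition \ref{prop: endpoint} only tells us $Ja$ is an edge with endpoints $v$ and $Jx$; one must confirm $Jx$ is a genuine vertex (not an edge, and not zero) — this follows because $Ja$ is an honest edge of the graph and $v$ is one of its two endpoints, so the other endpoint $Jx$ must be a vertex. Fixed-point-freeness is automatic from $J^2 = -\mathrm{Id}$: if $Jx = \varepsilon x$ for $\varepsilon = \pm 1$ then $-x = J^2 x = \varepsilon^2 x = x$, absurd. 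So $N_v$ decomposes into $J$-orbits $\{x, -Jx\} \leftrightarrow$ two distinct vertices, giving $|N_v|$ even. The symmetric argument applies to $w$; either one alone already yields the contradiction. I would present it for $v$ and note $w$ is identical, or simply invoke Corollary \ref{cor:odd} to reduce to one case.

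**Remark on packaging.** In fact this proof shows slightly more: the hypothesis $d(v,w)\ge 3$ is used only to rule out that any neighbor of $v$ is also a neighbor of $w$ (and vice versa), so what is really proved is: if $w = Jv$, both are vertices, and $N_v \cap N_w = \emptyset$, then $\deg(v) = \deg(w)$ is even — which is exactly Remark \ref{rem:different-components} extended from "different components" to "no common neighbor and not adjacent." I would phrase the proof to make this reuse of the Corollary \ref{cor: same parity} machinery transparent rather than re-deriving it from scratch.
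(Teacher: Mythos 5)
Your proof is correct and follows essentially the same route as the paper's: for each $x\in N_v$ the edge $\seg{vx}$ is non-distinguished, Proposition \ref{prop: endpoint} leaves only the two options $J(\seg{vx})=\pm\,\seg{xw}$ or $J(\seg{vx})=\pm\,\seg{v\,Jx}$, the first is excluded by $d(v,w)\geq 3$, and so $N_v$ is $J$-invariant (up to sign) and $\deg(v)$ is even. The contrapositive/contradiction packaging and the reduction via Corollary \ref{cor:odd} are only cosmetic differences; they simply let you bypass the isolated-vertex case, which the paper handles separately via Corollary \ref{cor: isolated}.
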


\begin{proof}
If $v$ is an isolated vertex then $\deg(v)=0$ and $\deg(w)$ is even due to Corollary \ref{cor: isolated}. 
Assume now that $v$ is non-isolated and let $x\in N_v$. According to Proposition \ref{prop: endpoint}, we have two possibilities: (i) $J(\overline{vx})=\pm\, \overline{xw}$, or (ii) $Jx$ is, up to sign, a vertex of $\G$, say $y$, and $J(\overline{vx})=\pm\, \overline{vy}$. In the former case, we would have the path $v,x,w$, but this contradicts the fact that $d(v,w)\geq 3$. Therefore, it must be $J(\overline{vx})=\pm\, \overline{vy}$. This happens for any vertex in $N_v$, and therefore $\deg(v)$ is even. 

The same analysis with $w$ instead of $v$ shows that $\deg(w)$ is also even.
\end{proof}

\begin{corollary}
With the same notation as in Proposition \ref{prop: vertices impares lejos}, if $v$ and $w$ with $w=Jv$ are odd vertices then $d(v,w)\leq 2$. In particular, they are in the same connected component of $\G$. 
\end{corollary}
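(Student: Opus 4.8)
The plan is to derive this immediately from Proposition~\ref{prop: vertices impares lejos} by contraposition. Suppose $v$ and $w=Jv$ are both odd vertices. If we had $d(v,w)\geq 3$, then Proposition~\ref{prop: vertices impares lejos} would force both $v$ and $w$ to be even, contradicting our assumption. Hence $d(v,w)\leq 2$. This is the entire content of the first assertion, and no further computation is needed; the real work was already done in the proposition.

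For the second assertion, I would argue that $d(v,w)\leq 2<+\infty$ means by definition there is a path (of length $1$ or $2$) joining $v$ to $w$, so $v$ and $w$ lie in the same connected component of $\G$. One subtlety to address: the conclusion $d(v,w)\leq 2$ from the first part already presupposes $v\neq w$ (since $J$ has no fixed vectors, as $J^2=-\operatorname{Id}$), so $v$ and $w$ are genuinely distinct vertices in a common component. I should also note the degenerate possibility that $v$ is isolated is automatically excluded here, since $\deg(v)$ is odd and in particular positive.

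There is essentially no obstacle: this corollary is a direct logical consequence of the preceding proposition together with the definition of distance, so the proof is a single short paragraph. The only thing to be careful about is phrasing the contrapositive cleanly and recalling that finite distance is equivalent to lying in the same connected component. If desired, one can make the statement slightly sharper by observing that when $d(v,w)=2$ there is a wedge $(u;v,w)$ centered at some vertex $u$ adjacent to both, which connects naturally to the wedge analysis used elsewhere in the paper, but this is not required for the proof.

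\begin{proof}
We argue by contraposition using Proposition~\ref{prop: vertices impares lejos}. Suppose $v$ and $w=Jv$ are both odd vertices. If we had $d(v,w)\geq 3$, then Proposition~\ref{prop: vertices impares lejos} would imply that both $v$ and $w$ are even vertices, which is a contradiction. Hence $d(v,w)\leq 2$. Since $J^2=-\operatorname{Id}$ has no nonzero fixed vectors, $v\neq w$, so $d(v,w)\in\{1,2\}$ is finite; by definition of the distance this means there is a path joining $v$ and $w$, and therefore $v$ and $w$ lie in the same connected component of $\G$.
\end{proof}
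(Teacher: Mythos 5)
Your proof is correct and matches the paper's intent exactly: the paper states this corollary without proof as the immediate contrapositive of Proposition \ref{prop: vertices impares lejos}, which is precisely your argument, together with the standard observation that finite distance implies membership in a common connected component. Nothing is missing.
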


\begin{corollary}\label{cor: vertices impares lejos}
Let $\G$ be a graph with the following property: there exists an odd vertex $v$ such that $d(v,w)\geq 3$ for any other odd vertex $w$. Then $\n_\G$ does not admit any adapted complex structure. 
\end{corollary}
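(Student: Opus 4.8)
The plan is to argue by contradiction, using Corollary~\ref{cor:odd} to pin down where $J$ must send the distinguished odd vertex $v$. Suppose $\n_\G$ admits an adapted complex structure $J$, and let $v$ be an odd vertex with $d(v,w)\geq 3$ for every other odd vertex $w$. By Corollary~\ref{cor:odd}, $Jv$ is again a vertex of $\G$ with odd degree; call it $w$. Since $w$ is an odd vertex distinct from $v$ (as $J^2=-\mathrm{Id}$ forces $Jv\neq \pm v$ among vertices, and in any case $v$ cannot equal $\pm Jv$ inside $\n_\G$ because $V$ is a basis of $W_0$), the hypothesis on $v$ gives $d(v,w)\geq 3$.

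Now I would invoke the Corollary immediately preceding the statement (the one extracted from Proposition~\ref{prop: vertices impares lejos}): if $v$ and $w=Jv$ are odd vertices then $d(v,w)\leq 2$. This directly contradicts $d(v,w)\geq 3$. Hence no adapted complex structure can exist, which is the claim.

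The only point that needs a moment of care --- and the place I expect the reader (or referee) to want a sentence --- is the justification that $w=Jv$ is a vertex \emph{different} from $v$ and that the hypothesis ``$d(v,w)\geq 3$ for any \emph{other} odd vertex $w$'' genuinely applies to it. For this, note that an adapted complex structure sends the vertex $v$ to $\pm$(a vertex or an edge); Corollary~\ref{cor:odd} rules out the edge case and the negative sign is irrelevant for incidence, so $w:=\pm Jv\in V$. If $w=v$ then $Jv=\pm v$, contradicting $J^2=-\mathrm{Id}$ (it would give $-v=J^2v=\pm Jv=\pm(\pm v)=v$, so $v=0$, impossible). Thus $w$ is an odd vertex other than $v$, the distance hypothesis applies, $d(v,w)\geq 3$, and the contradiction follows. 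Since the entire argument is just a concatenation of already-established corollaries, there is no real obstacle here; the statement is essentially a restatement of the previous corollary in contrapositive form, packaged as an existence obstruction.
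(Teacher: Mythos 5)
Your argument is correct and is exactly the reasoning the paper intends (the corollary is stated without proof as an immediate consequence of Corollary \ref{cor:odd} together with the corollary preceding it): $Jv$ is, up to sign, another odd vertex $w\neq v$, so the hypothesis gives $d(v,w)\geq 3$, contradicting $d(v,w)\leq 2$. Your extra care about $Jv\neq\pm v$ and the irrelevance of signs for incidence is fine and matches the paper's ``up to sign'' conventions.
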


\begin{example}
According to Corollary \ref{cor: vertices impares lejos}, the 2-step nilpotent Lie algebra associated to the following graph does not admit any adapted complex structure:
\[
\begin{tikzpicture}[scale=0.9]\footnotesize
\tikzset{every loop/.style={looseness=30}}
\node (1) at (0,1) [draw,circle,inner sep=1pt,fill=black!100] {};
\node (2) at (-3.5,0)  [draw,circle,inner sep=1pt,fill=black!100,label=270:{$v_1$}] {};
\node (3) at (-2.5,0) [draw,circle,inner sep=1pt,fill=black!100] {};
\node (4) at (-1.5,0) [draw,circle,inner sep=1pt,fill=black!100] {};
\node (5) at (-0.5,0) [draw,circle,inner sep=1pt,fill=black!100,label=270:{$v_2$}] {};
\node (6) at (0.5,0) [draw,circle,inner sep=1pt,fill=black!100,label=270:{$v_3$}] {};
\node (7) at (1.5,0)  [draw,circle,inner sep=1pt,fill=black!100] {};
\node (8) at (2.5,0) [draw,circle,inner sep=1pt,fill=black!100] {};
\node (9) at (3.5,0) [draw,circle,inner sep=1pt,fill=black!100,label=270:{$v_4$}] {};

\path
(1) edge [-, black] node[above=-0] {} (5)
(1) edge [-, black] node[above=-10] {} (6)
(2) edge [-, black] node[above=-10] {} (3)
(3) edge [-, black] node[above=0] {} (4)
(4) edge [-, black] node[above=-20] {} (5)
(5) edge [-, black] node[above=0] {} (6)
(6) edge [-, black] node[above=0] {} (7)
(7) edge [-, black] node[above=-20] {} (8)
(8) edge [-, black] node[above=-20] {} (9);
\end{tikzpicture} \] 
Indeed, the odd vertices are $v_1,v_2,v_3,v_4$, and the vertices satisfying the hypothesis of Corollary \ref{cor: vertices impares lejos} are $v_1$ or $v_4$. 
\end{example}

\begin{example}\label{ex: peine}
We exhibit a connected graph that admits an adapted complex structure $J$ such that there exists a pair of vertices $v,w$ with $Jv=w$ and $d(v,w)\geq 3$. 

For any $n\geq 3$, consider the following $2n$ points in the Euclidean plane $\R^2$:
\[ v_k=(k,1), \quad w_k=(k,0), \quad k=1,\ldots, n. \]
Let $F_n$ denote the graph with vertices $V=\{v_k,w_k\mid k=1,\ldots, n\}$ and edges:
\begin{itemize}
    \item $a_k=\overline{v_k v_{k+1}}$, $k=1,\ldots, n-1$, 
    \smallskip 
    \item $b_k=\overline{v_k w_k}$, $k=1,\ldots, n$,
    \smallskip
    \item $c=\overline{w_{n-1} w_n}$.
\end{itemize}
See Figure \ref{fig:P-n} for the case $n=5$. 

\begin{figure}[h]
    \centering
    \begin{tikzpicture}[scale=0.9]\footnotesize
\tikzset{every loop/.style={looseness=30}}
\node (1) at (1,2) [draw,circle,inner sep=1pt,fill=black,label=90:{$v_1$}] {};
\node (2) at (3,2)  [draw,circle,inner sep=1pt,fill=black,label=90:{$v_2$}] {};
\node (3) at (5,2) [draw,circle,inner sep=1pt,fill=black,label=90:{$v_3$}] {};
\node (4) at (7,2) [draw,circle,inner sep=1pt,fill=black,label=90:{$v_4$}] {};
\node (5) at (9,2) [draw,circle,inner sep=1pt,fill=black,label=90:{$v_5$}] {};
\node (6) at (1,0) [draw,circle,inner sep=1pt,fill=black,label=270:{$w_1$}] {};
\node (7) at (3,0) [draw,circle,inner sep=1pt,fill=black,label=270:{$w_2$}] {};
\node (8) at (5,0) [draw,circle,inner sep=1pt,fill=black,label=270:{$w_3$}] {};
\node (9) at (7,0) [draw,circle,inner sep=1pt,fill=black,label=270:{$w_4$}] {};
\node (10) at (9,0) [draw,circle,inner sep=1pt,fill=black,label=270:{$w_5$}] {};

\path
(1) edge [-, black] node[above=-0] {$a_1$} (2)
(2) edge [-, black] node[above=-0] {$a_2$} (3)
(3) edge [-, black] node[above=-0] {$a_3$} (4)
(4) edge [-, black] node[above=0] {$a_4$} (5)
(1) edge [-, black] node[right=-0] {$b_1$} (6)
(2) edge [-, black] node[right=0] {$b_2$} (7)
(3) edge [-, black] node[right=0] {$b_3$} (8)
(4) edge [-, black] node[right=0] {$b_4$} (9)
(5) edge [-, black] node[right=0] {$b_5$} (10)
(9) edge [-, black] node[above=0] {$c$} (10);

\end{tikzpicture}
    \caption{Graph $F_5$}
    \label{fig:P-n}
\end{figure}
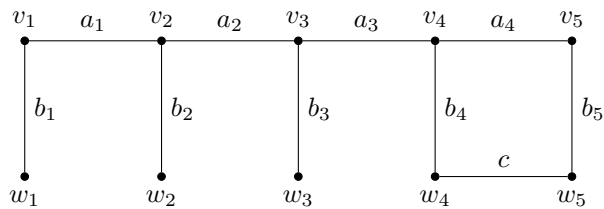 

The Lie bracket on $\n_{F_n}$ is given by
\[ [v_k,v_{k+1}]=a_k, \quad [v_k,w_k]=b_k, \quad [w_{n-1},w_n]=c.\]
Define an almost complex structure $J$ on $\n_{F_n}$ as follows:
\begin{gather*}
    Jv_1=w_n, \quad Jv_k=w_{k-1}, \quad k=2,\ldots, n, \\
    Jb_n=c, \quad Ja_k=b_k, \quad k=1, \ldots, n-1.
\end{gather*} 
Let us verify the integrability of $J$: first,
\begin{align*} 
N_J(v_1,v_2) & =[v_1,v_2]+J([w_n,v_2]+[v_1,w_1])-[w_n,w_1] \\
& = a_1+Jb_1 \\
&=0.
\end{align*}
For $k=3,\ldots, n-1$, it is clear that $N_J(v_1,v_k)=0$, since there are no edges connecting the vertices involved in the calculation. Next, we compute
\begin{align*} 
N_J(v_1,v_n) & =[v_1,v_n]+J([w_n,v_n]+[v_1,w_{n-1}])-[w_n,w_{n-1}] \\
& = -Jb_n+c \\
&=0.
\end{align*}
For $2\leq k\leq n-1$, we have
\begin{align*} 
N_J(v_k,v_{k+1}) & =[v_k,v_{k+1}]+J([w_{k-1},v_{k+1}]+[v_k,w_{k}])-[w_{k-1},w_{k}] \\
& = a_k+Jb_k \\
&=0.
\end{align*}
It is clear that $N_J(v_k,v_j)=0$ for $k+1\leq j\leq n-1$, since there are no edges connecting the vertices involved in the computation. It follows from \eqref{eq: simetrias} that $J$ is integrable.

Note that $w_n=Jv_1$ and $d(v_1,w_n)=n\geq 3$ with $\deg(v_1)=\deg(w_n)=2$, in accordance with Proposition \ref{prop: vertices impares lejos}.
\end{example}

\section{Construction of graphs with adapted complex structures}\label{sec:construction}

In this section, we prove the most important results of the article. First, we characterize the graphs that admit an abelian adapted complex structure, and call them \textit{basic}. Later, we show that any graph equipped with an adapted complex structure has a unique spanning subgraph that is basic. Conversely, we exhibit a method to construct graphs with an adapted complex structure beginning with a basic graph and adding certain pairs of edges. This provides an easy way to obtain 2-step nilpotent Lie algebras with complex structures.

\subsection{Abelian adapted complex structures and basic graphs}
Here, we will determine the graphs $\B$ that admit an adapted complex structure $J$ that is, in addition, abelian. Recall that this condition means that $[Jx,Jy]=[x,y]$ for all $x,y\in \n_\G$. We will show in the next result that this condition is very restrictive, since the only graphs that admit an abelian adapted complex structure are those obtained as the union of some copies of $\A_1$, $\A_2$ or $\A_3$ from Example \ref{ex: first examples}.

\begin{proposition}\label{prop: basic}
Let $\B=\B(V,E)$ be a graph with an adapted complex structure $J$. Then the following statements are equivalent:
\begin{enumerate}[{\rm (i)}]
    \item All the edges in $\B$ are distinguished,
    \item $\B$ is the union of some copies of $\A_1$, $\A_2$ or $\A_3$, and each copy is equipped with the adapted complex structure given in Example \ref{ex: first examples},
    \item The adapted complex structure $J$ on $\n_\B$ is abelian.
\end{enumerate}
\end{proposition}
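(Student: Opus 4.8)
The plan is to establish the cycle of implications $\rii \Rightarrow \riii \Rightarrow \ri \Rightarrow \rii$.

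The implication $\rii\Rightarrow\riii$ is immediate: each of the model structures on $\n_{\A_1}$, $\n_{\A_2}$, $\n_{\A_3}$ is abelian (as recorded in Example~\ref{ex: first examples}), and abelianness is inherited by direct products, since both the Lie bracket and $J$ act componentwise; as $\n_\B$ is isomorphic to the corresponding product of the $\n_{\A_i}$, its complex structure is abelian. For $\riii\Rightarrow\ri$, let $a=\seg{xy}$ be an arbitrary edge, so that $[x,y]\neq 0$. Substituting into $N_J(x,y)=0$ and using $[Jx,Jy]=[x,y]$ gives $J([Jx,y]+[x,Jy])=0$, hence $[Jx,y]+[x,Jy]=0$, and by Corollary~\ref{cor: distinguished} the edge $a$ is distinguished. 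Since $a$ was arbitrary, $\ri$ holds.

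For $\ri\Rightarrow\rii$ I would argue as follows. Assume every edge of $\B$ is distinguished. As distinguished edges are pairwise non-adjacent, $\B$ has maximum degree at most $1$; thus $\B$ is the disjoint union of a set $\mathcal E$ of single edges and a set $\mathcal I$ of isolated vertices, the non-isolated vertices occurring in $|\mathcal E|$ pairs $\{p,q\}$, one for each edge. For such a pair, the edge $\seg{pq}$ being distinguished forces $Jp=\pm q$, so $\operatorname{span}\{p,q\}$ is $J$-invariant; hence the subspace $W_0'$ spanned by all non-isolated vertices is $J$-invariant. Next I would show directly that its complement $\operatorname{span}(\mathcal E\cup\mathcal I)$ (the center of $\n_\B$) is $J$-invariant as well, by looking at $J$ on the generators: for an edge $a$, $Ja$ cannot be $\pm$ a non-isolated vertex $w$ (otherwise $Jw=\mp a$ would be an edge, whereas $w$ lies on a distinguished edge and so $Jw$ is a vertex), so $Ja$ is $\pm$ another edge or $\pm$ an isolated vertex, and $Ja\neq\pm a$ because $J^2=-\operatorname{Id}$; similarly, by Corollary~\ref{cor: isolated} (noting that even vertices of $\B$ are isolated), for $u\in\mathcal I$ the element $Ju$ is $\pm$ an edge or $\pm$ another isolated vertex. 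Consequently $J$ induces a fixed-point-free matching on $\mathcal E\cup\mathcal I$, each matched pair being of one of three types: $\{$isolated, isolated$\}$, $\{$edge, isolated$\}$, or $\{$edge, edge$\}$, and the $2$-plane spanned by each pair is $J$-invariant.

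Finally I would assemble the pieces. A matched pair $\{u,u'\}$ of isolated vertices spans a copy of $\n_{\A_1}$ with its adapted complex structure. A matched pair $\{a,u\}$ of an edge and an isolated vertex, together with the two endpoints of $a$, spans a $J$-invariant $4$-dimensional ideal isomorphic to $\n_{\A_2}\cong\h_3\times\R$; since $\h_3\times\R$ carries a unique complex structure up to equivalence (see the remark following Example~\ref{ex: first examples}), $J$ restricted there is equivalent to the adapted structure of $\A_2$. A matched pair $\{a,a'\}$ of edges, together with their four (distinct) endpoints, spans a $J$-invariant $6$-dimensional ideal isomorphic to $\n_{\A_3}\cong\h_3\times\h_3$, on which $J$ has the form $Jp=q$, $Jr=s$, $Ja=\varepsilon a'$ with $\varepsilon=\pm 1$; the two signs give adapted complex structures on $\n_{\A_3}$ interchanged by the automorphism swapping the two Heisenberg factors, so $J$ there is again equivalent to the adapted structure of $\A_3$. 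As each non-isolated vertex lies on a unique edge and each element of $\mathcal E\cup\mathcal I$ lies in a unique matched pair, $\n_\B$ is the direct sum of these $J$-invariant ideals, and correspondingly $\B$ is the union of the associated copies of $\A_1$, $\A_2$, $\A_3$, each equipped with its standard adapted complex structure. I expect the main obstacle to be precisely the bookkeeping in $\ri\Rightarrow\rii$: ruling out that $Ja$ is a non-isolated vertex, confirming the $J$-invariance of each assembled ideal, and checking on the $\{$edge, edge$\}$ planes that both orientations of $J$ are equivalent to the model on $\A_3$; the Nijenhuis-type computations are routine.
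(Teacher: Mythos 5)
Your proposal is correct and follows essentially the same route as the paper: the same cycle of implications, with $\ri\Rightarrow\rii$ resting on the observation that pairwise non-adjacent distinguished edges force maximum degree one, after which $J$ matches up edges and isolated vertices to produce $J$-invariant copies of $\A_1$, $\A_2$, $\A_3$ (the paper organizes this case analysis via Corollary \ref{cor: isolated} and the splitting $E=E_0\cup E_1$, but it is the same decomposition). Your additional care about signs and about equivalence on the $\h_3\times\R$ and $\h_3\times\h_3$ blocks is a harmless refinement of what the paper leaves implicit.
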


\begin{proof}
If $\B$ is an empty graph with $2n$ vertices, then the equivalences hold trivially (since $\B$ is the union of $n$ copies of $\A_1$ and $\n_\B$ is abelian). Thus, from now on, we assume that $|E|\geq 1$.

$\ri \Rightarrow \rii$ Assume that all edges of $\B$ are distinguished. In particular, $\deg(v)\leq 1$ for any $v\in V$; moreover, the connected components of $\B$ are isolated vertices or segments. 
If $v$ is an isolated vertex of $\B$ then $Jv$ is, up to sign,  either another isolated vertex or an edge, due to Corollary \ref{cor: isolated}. In the former case, the subgraph formed only by the vertices $v$ and $Jv$ is isomorphic to $\A_1$, and it is invariant by $J$. On the other hand, in the latter case the $J$-invariant subgraph formed by $v$, $Jv$ and the endpoints of $Jv$ is isomorphic to $\A_2$. Now, we may decompose $E$ as a disjoint union $E=E_0\cup E_1$, where $E_0$ is set of edges that are mapped to (isolated) vertices by $J$ and $E_1$ is the set of edges that are mapped to another edge by $J$. Thus, if $a$ is an edge in $E_1$ then $Ja$ is another edge in $E_1$ and the $J$-invariant subgraph determined by $a, Ja$ and their endpoints is isomorphic to $\A_3$. This completes all possible cases, and thus $\B$ is the union of some copies of $\A_1$, $\A_2$ and $\A_3$, where each copy is equipped with the adapted complex structure given in Example \ref{ex: first examples}.

$\rii \Rightarrow \riii$ Assume that $\B$ is the union of $p_1$ copies of $\A_1$, $p_2$ copies of $\A_2$ and $p_3$ copies of $\A_3$ (with $p_1,p_2,p_3\in \N\cup \{0\})$. Then we have that 
\[ \n_\B=\underbrace{\R^2\times  \cdots \times \R^2}_{p_1} \times  \underbrace{(\h_3\times \R)\times \cdots \times (\h_3\times \R)}_{p_2} \times   \underbrace{(\h_3\times \h_3)\times \cdots \times (\h_3\times \h_3)}_{p_3}, \]
where each factor $\R^2$, $\h_3\times\R$ and $\h_3\times \h_3$ is $J$-invariant. Therefore, the adapted complex structure $J$ on $\n_\B$ is abelian, since it is on each factor.

$\riii \Rightarrow \ri$ Assume now that the adapted complex structure $J$ on $\n_\B$ is abelian. The condition for $J$ being abelian means $[Jx,Jy]=[x,y]$ for all $x,y\in \n_\B$, and this is equivalent to 
\[ [Jx,y]+[x,Jy]=0 \quad \text{ for all } x,y\in \n_\B. \]
If $x$ and $y$ are vertices in $\B$ connected by an edge then it follows from Corollary \ref{cor: distinguished} that $\overline{xy}$ is a distinguished edge. Hence, any edge is distinguished.
\end{proof}

The graphs appearing in Proposition \ref{prop: basic} will be very important throughout this article, so we give them a special name.

\begin{definition}\label{def: basic}
A graph $\B$ is called \textit{basic} if it is the union of several copies of the graphs $\A_1$, $\A_2$ and $\A_3$ from Example \ref{ex: first examples}.
\end{definition}

If $\B$ is a basic graph, we will use the notation $\B=(p_1,p_2,p_3)$, with $p_i\in \N_0$, to indicate that $\B$ is the union of $p_1$ copies of $\A_1$, $p_2$ copies of $\A_2$ and $p_3$ copies of $\A_3$. For example, $\A_1=(1,0,0)$, $\A_2=(0,1,0)$ and $\A_3=(0,0,1)$. Note that $2p_1+3p_2+4p_3$ is the number of vertices in $\B$. 

According to Proposition \ref{prop: basic}, each basic graph admits an abelian adapted complex structure, and they are the only graphs that admit them. However, we show next that a basic graph may admit different adapted complex structures.

\begin{remark}
The same basic graph may admit non-equivalent adapted complex structures. Indeed, consider the graph $\B$ given in Figure \ref{fig: 2 basic}.
\begin{figure}[h]
    \centering
    \begin{tikzpicture}\footnotesize
\tikzset{every loop/.style={looseness=30}}
\node (1) at (-1,0) [draw,circle,inner sep=1pt,fill=black!100,label=90:{$v_1$}] {};
\node (2) at (1,0)  [draw,circle,inner sep=1pt,fill=black!100,label=90:{$v_2$}] {};
\node (3) at (3,0) [draw,circle,inner sep=1pt,fill=black!100,label=90:{$v_3$}] {};
\node (4) at (-1,-1) [draw,circle,inner sep=1pt,fill=black!100,label=270:{$v_4$}] {};
\node (5) at (1,-1)  [draw,circle,inner sep=1pt,fill=black!100,label=270:{$v_5$}] {};
\node (6) at (3,-1) [draw,circle,inner sep=1pt,fill=black!100,label=270:{$v_6$}] {};

\path
(1) edge [-, black] node[above=-0] {} (2)
(4) edge [-, black] node[above=-0] {} (5);

\end{tikzpicture}
   \caption{Graph $\A_2\cup \A_2$ or $\A_3\cup \A_1$}
   \label{fig: 2 basic}
\end{figure} 
This graph can be seen as $\A_2\cup \A_2$, that is $\B=(0,2,0)$, where one $\A_2$ is given by the vertices $v_1,v_2,v_3$ and the edge $e_{1,2}$, and the second $\A_2$ is given by the vertices $v_4,v_5,v_6$ and the edge $e_{4,5}$. The corresponding adapted complex structure $J_1$ on $\n_\B$ satisfies $J_1v_1=v_2$, $J_1v_4=v_5$, $J_1v_3=e_{1,2}$ and $J_1v_6=e_{4,5}$. On the other hand, $\B$ can also be seen as $\A_3\cup \A_1$, that is, $\B=(1,0,1)$, where $\A_3$ is given by the vertices $v_i$, $1\leq i\leq 4$, and the edges $e_{1,2}, e_{4,5}$, and $\A_1$ consists only of the vertices $v_3,v_6$. The corresponding adapted complex structure $J_2$ on $\n_\B$ satisfies $J_2 v_1=v_2$, $J_2v_4=v_5$, $J_2v_3=v_6$ and $J_2e_{1,2}=e_{4,5}$. Clearly, the complex structures $J_1$ and $J_2$ are not equivalent since the commutator ideal of $\n_\B$ (generated by the two edges) is not invariant by $J_1$ but is invariant by $J_2$.
\end{remark}

\subsection{An exhaustive method to generate all graphs with adapted complex structures}

In this section, we show that in any graph $\G$ equipped with an adapted complex structure $J$ there is a unique $J$-invariant spanning basic subgraph $\G_b$. After that, we show conversely that any graph admitting an adapted complex structure can be obtained from a basic graph by adding certain edges in a precise way. 

We begin this section with the following important result, establishing the existence of a basic subgraph associated to the adapted complex structure.

\begin{theorem}\label{thm: existe basico}
Let $J$ be an adapted complex structure on the graph $\G$. Then there exists a unique $J$-invariant spanning basic subgraph $\G_b$ of $\G$. 
\end{theorem}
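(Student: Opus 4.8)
\textbf{Proof plan for Theorem \ref{thm: existe basico}.}

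The plan is to construct $\G_b$ by repeatedly stripping off complex wedges, and then establish uniqueness separately. For existence, I would argue by induction on the number of edges of $\G$. If every edge of $\G$ is distinguished, then by Proposition \ref{prop: basic} the graph $\G$ is already basic and $J$-invariant, so $\G_b=\G$. Otherwise there is a non-distinguished edge $a=\seg{uv}$; by Proposition \ref{prop: endpoint}, $Ja$ is (up to sign) another non-distinguished edge adjacent to $a$, say $Ja=\pm\,\seg{uw}$ with $w=\pm Jv$ a non-isolated vertex. The first key claim is that the relation $[u,v]+J[u,w]=0$ forced by integrability exhibits $(u;v,w)$ as a \emph{complex wedge}: this follows by unwinding which of the alternatives (P1)--(P3) occurs when we write out $N_J(u,v)=0$, exactly as in the bracket bookkeeping used throughout Section \ref{sec:integrability}. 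Having identified a complex wedge, Lemma \ref{lem: remove} lets me delete the two edges $\seg{uv}$ and $\seg{uw}$ to obtain $\G'$ on the same vertex set, with $J$ restricting to an adapted complex structure $J'$ on $\G'$. Since $\G'$ has fewer edges, by induction it contains a unique $J'$-invariant spanning basic subgraph $\G'_b$; I then set $\G_b:=\G'_b$, which is a spanning basic subgraph of $\G$ that is $J$-invariant (its $J$-invariance is the same as $J'$-invariance since $J$ and $J'$ agree and the edge set of $\G_b$ lies in $\G'$).

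For uniqueness, I would show that \emph{any} $J$-invariant spanning basic subgraph $\G_1$ of $\G$ must have exactly the edge set obtained by the stripping procedure, namely the set of distinguished edges of $\G$ together with — wait, more carefully: a basic subgraph has only distinguished edges (Proposition \ref{prop: basic}(i)), so $E(\G_1)$ is contained in the set $E_d$ of distinguished edges of $\G$. Conversely I must check that every distinguished edge of $\G$ lies in $E(\G_1)$. Suppose $a=\seg{v,Jv}$ is a distinguished edge of $\G$ not in $\G_1$. Since $\G_1$ is spanning, $v$ and $Jv$ are vertices of $\G_1$, and since $\G_1$ is basic and $J$-invariant, $v$ sits in one of the $\A_i$-blocks of $\G_1$; but then the $J$-orbit of $v$ inside $\G_1$ — being a block of type $\A_1,\A_2,\A_3$ — already prescribes whether $v$ is matched to a vertex, to an edge, or to $Jv$ via an edge \emph{of} $\G_1$. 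Comparing with the fact that $[v,Jv]\neq 0$ in $\n_\G$ and that $J$ is a fixed map on all of $\n_\G$, I would derive that $v$ and $Jv$ must be joined by an edge inside the block, i.e.\ $a\in E(\G_1)$, a contradiction. Hence $E(\G_1)=E_d$ is forced, and since the decomposition of a basic graph with a given adapted $J$ into $\A_i$-blocks is determined by the $J$-action on $W_0\oplus W_1$, the subgraph $\G_1$ is uniquely determined.

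The main obstacle I anticipate is the uniqueness half, specifically pinning down that the edge set of \emph{every} $J$-invariant spanning basic subgraph is precisely the set of distinguished edges of the ambient graph $\G$ — one must be careful because a priori a basic block could be ``$\A_1$'' (two vertices $v,Jv$ with no edge between them) even when $v$ and $Jv$ happen to be adjacent in $\G$, so one really needs the Lie-algebra structure, i.e.\ that $[v,Jv]\neq 0$ holds in $\n_\G$ independently of which subgraph we look at, to rule this out. A secondary but routine point is verifying that the stripping process terminates and that at each stage a complex wedge genuinely exists whenever a non-distinguished edge is present; both are handled by the finiteness of $E$ and by Proposition \ref{prop: endpoint} together with Corollary \ref{cor: distinguished}. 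I would also remark that the induction simultaneously yields that the $\G_b$ produced is independent of the order in which complex wedges are removed, which is an alternative route to uniqueness if the block-comparison argument turns out to be delicate.
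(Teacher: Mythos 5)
Your existence half is essentially the paper's proof: Proposition \ref{prop: endpoint} (together with the (P1)--(P3) bookkeeping behind Corollary \ref{cor: distinguished}) places every non-distinguished edge in a complex wedge, Lemma \ref{lem: remove} deletes its two edges while keeping the restricted structure adapted on the same vertex set, finiteness of $E$ terminates the process, and Proposition \ref{prop: basic} identifies the terminal graph as basic; phrasing this as induction on $|E|$ instead of as an iterative stripping changes nothing of substance. Within that half, the inclusion you use later, $E(\G_1)\subseteq E_d$ (edges of any $J$-invariant spanning basic subgraph are distinguished in $\G$), is also fine, and can be proved without worrying whether $J$ restricted to $\n_{\G_1}$ is integrable: if $a\in E(\G_1)$ were non-distinguished, Proposition \ref{prop: endpoint} plus $J$-invariance of $\operatorname{span}\bigl(V\cup E(\G_1)\bigr)$ would force the adjacent edge $\pm Ja$ into $E(\G_1)$ as well, giving a vertex of degree at least $2$ in $\G_1$, which a basic graph cannot have.

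The genuine gap is the reverse inclusion $E_d\subseteq E(\G_1)$, exactly the point you flagged and then asserted could be "derived from the Lie-algebra structure". It cannot: $J$-invariance of $\operatorname{span}\bigl(V\cup E(\G_1)\bigr)$ imposes no constraint that forces a distinguished edge into $\G_1$, because $J$ sends the vertex $v$ to the vertex $Jv$ whether or not the edge $\seg{vJv}$ is included, and the bracket $[v,Jv]\neq 0$ of the ambient algebra never enters the invariance condition. Concretely, take $\G=\A_3$ with its standard structure ($Jv_1=v_2$, $Jv_3=v_4$, $Je_{1,2}=e_{3,4}$): the edgeless spanning subgraph on $\{v_1,v_2,v_3,v_4\}$ is basic (two copies of $\A_1$) and spans a $J$-invariant subspace, yet contains neither distinguished edge. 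So the block-comparison argument you sketch cannot be completed under that reading of "$J$-invariant spanning basic subgraph", and uniqueness must be understood the way the paper proves it — which is precisely your fallback remark: the output of the stripping process is canonical because only non-distinguished edges are ever removed, every non-distinguished edge eventually is removed (it always lies in a complex wedge of the current subgraph), and distinguished edges remain distinguished throughout; hence $\G_b$ is the spanning subgraph whose edge set is exactly the distinguished edges of $\G$, independently of the order of removal. Replace the block-comparison paragraph by this characterization and the proof aligns with the paper's.
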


\begin{proof}
Let $E'$ denote the set of non-distinguished edges in $(\G,J)$, and let $\h$ denote the central ideal of $\n_\G$ generated by $E'$. Note that, according to Proposition \ref{prop: endpoint}, $\h$ is $J$-invariant, hence the quotient Lie algebra $\n_\G/\h$ carries an induced complex structure $J'$.

On the other hand, let $\G_b$ be the subgraph  of $\G$ with the same set of vertices $V$ and set of edges $E-E'$. That is, it is the spanning subgraph of $\G$ whose edges are precisely all the distinguished edges of $\G$. Note that $V\cup (E-E')$ is $J$-invariant, so that the Lie algebra $\n_{\G_b}$ admits an adapted almost complex structure $J_b$, induced by $J$. Now, let us define $f\colon \n_\G \to \n_{\G_b}$ in the following way:
\begin{itemize}
\item if $v\in V$ then $f(v)=v$,
\item if $e\in E-E'$ then $f(e)=e$,
\item if $e\in E'$ then $f(e)=0$. 
\end{itemize}
Then $f$ is a Lie epimorphism with $\ker f=\h$, thus it induces a Lie isomorphism $\overline{f}\colon \n_\G/\h \to \n_{\G_b}$. Moreover, $\overline{f}\circ J'=J_b\circ \overline{f}$. This implies that $J_b$ is integrable on $\n_{\G_b}$, hence $J_b$ is an adapted complex structure on $\G_b$. Since all the edges in $(\G_b,J_b)$ are distinguished, it follows from Proposition \ref{prop: basic} that $\G_b$ is basic. 

The uniqueness is clear.
\end{proof}

\begin{definition}
    The subgraph $\G_b$ from Theorem \ref{thm: existe basico} will be called the basic subgraph associated with $(\G,J)$. 
\end{definition}

\begin{example}\label{ex: A1}
In the graphs with adapted complex structures given in Proposition \ref{prop: bipartite} and Example \ref{ex: peine} there are no distinguished edges. Therefore, the associated basic subgraphs $\G_b$ are of the form $(p_1,0,0)$, that is, an empty graph obtained as a union of copies of $\A_1$.
\end{example}

\begin{example}
In this example we determine the basic subgraphs associated with the graphs in Example \ref{ex:G1G2}. 

\noindent (i) For $(\G_1,J_1)$, the associated basic subgraph is $\G_b=(0,0,1)$, shown in the figure below:
\begin{center}
\begin{tikzpicture}[scale=1]\footnotesize
    \node (1) at (0,0) [draw,circle,inner sep=1pt,fill=black!100,label=270:{$v_4$}] {};
    \node (2) at (2,0) [draw,circle,inner sep=1pt,fill=black!100,label=270:{$v_3$}] {};
    \node (3) at (2,1) [draw,circle,inner sep=1pt,fill=black!100,label=90:{$v_2$}] {};
    \node (4) at (0,1) [draw,circle,inner sep=1pt,fill=black!100,label=90:{$v_1$}] {};

    \path
    (1) edge  (2)
    (3) edge  (4);
\end{tikzpicture}
\end{center}

\noindent (ii) For $(\G_2,J_2)$, the associated basic subgraph is $\G_b=(1,1,0)$, shown in the figure below:
\begin{center}
\begin{tikzpicture}[scale=1]\footnotesize
    \node (1) at (0,0) [draw,circle,inner sep=1pt,fill=black!100,label=270:{$v_3$}] {};
    \node (2) at (2,0) [draw,circle,inner sep=1pt,fill=black!100,label=270:{$v_5$}] {};
    \node (3) at (2,1) [draw,circle,inner sep=1pt,fill=black!100,label=90:{$v_4$}] {};
    \node (4) at (0,1) [draw,circle,inner sep=1pt,fill=black!100,label=90:{$v_2$}] {};
    \node (5) at (-1,0.5) [draw,circle,inner sep=1pt,fill=black!120,label=180:{$v_1$}] {};
    \path
    (4) edge  (1);
\end{tikzpicture}
\end{center}
The subgraph $\A_1$ is the empty graph with vertices $v_4,v_5$, while $\A_2$ is the subgraph that has vertices $v_1,v_2,v_3$ and the only edge $e_{2,3}$.
\end{example}

\medskip

Conversely, we show next that any graph admitting an adapted complex structure can be obtained from a basic graph by adding certain edges in a precise way. 

We will need the following definition.

\begin{definition}
Let $\G$ be a graph equipped with an adapted complex structure $J$. A \textit{complex wedge} in $\G$ is a wedge $(u;v,w)$ of $\G$ such that $Jv=w$ and $J(\overline{uv})=\pm \, \overline{uw}$, in such a way that $[u,v]+J[u,w]=0$. 
\end{definition}

Graphically, a complex wedge is
\[
\begin{tikzpicture}
\tikzset{every loop/.style={looseness=30}}
\node (1) at (-3,0) [draw,circle,inner sep=1pt,fill=black,label=180:{$u$}] {};
\node (2) at (-0.5,0.75) [draw,circle,inner sep=1pt,fill=black,label=0:{$v$}] {};
\node (3) at (-0.5,-0.75) [draw,circle,inner sep=1pt,fill=black,label=0:{$w=Jv$}] {};

\path
(1) edge [-, black] (2)
(1) edge [-, black] (3);
\end{tikzpicture}
\] 
where $J$ sends one of these edges to the other. We point out that the vertices $v$ and $Jv$ may or may not be adjacent. 

\begin{remark}
Let $(\G,J)$ be a graph with an adapted complex structure. Assume that the vertices in $\G$ are labeled $v_1,\ldots,v_n$ and that the Lie brackets of $\n_\G$ are given by $[v_i,v_j]=e_{i,j}$ if $i<j$, whenever $v_i$ and $v_j$ are connected by an edge. If $(v_k;v_i,v_j)$ is a wedge in $\G$ with $Jv_i=v_j$ then it is easy to verify that it is a complex wedge if and only if:
\begin{equation*}
J e_{i,k}= \begin{cases}
    \phantom{-} e_{j,k}, \quad \text{if } k<i,j \text{ or } k>i,j; \\ 
    -e_{j,k}, \quad \text{if } i<k<j \text{ or } j<k<i.
\end{cases} 
\end{equation*}
\end{remark}

\begin{example}
Consider the 4-cycle $C_4$ as in the picture, with the adapted complex structure $J$ given by $Jv_1=v_3$, $Jv_2=v_4$, $J e_{1,2}=e_{1,4}$ and $Je_{2,3}=-e_{3,4}$.
\[
\begin{tikzpicture}\footnotesize
\tikzset{every loop/.style={looseness=30}}
\node (1) at (0,0) [draw,circle,inner sep=1pt,fill=black!100,label=180:{$v_4$}] {};
\node (2) at (1.5,0)  [draw,circle,inner sep=1pt,fill=black!100,label=0:{$v_3$}] {};
\node (3) at (1.5,1.5) [draw,circle,inner sep=1pt,fill=black!100,label=0:{$v_2$}] {};
\node (4) at (0,1.5) [draw,circle,inner sep=1pt,fill=black!100,label=180:{$v_1$}] {};

\path
(1) edge [-, black] (2)
(1) edge [-, black] (4)
(2) edge [-, black] (3)
(3) edge [-, black] (4);
\end{tikzpicture} 
\] 
Then there are two complex wedges in $C_4$, namely $(v_1;v_2,v_4)$ and $(v_3;v_2,v_4)$. On the other hand, if we had chosen the adapted complex structure $J'$ given by $J'v_1=v_3$, $J'v_2=v_4$, $J'e_{1,2}=-e_{2,3}$ and $J'e_{1,4}=e_{3,4}$, the complex wedges would be $(v_2;v_1,v_3)$ and $(v_4;v_1,v_3)$.
\end{example}

With this new notion at hand, we may reinterpret Proposition \ref{prop: endpoint} as follows:

\begin{proposition}\label{prop:wedge}
Let $J$ be an adapted complex structure on $\G$. Then any non-distinguished edge is in a unique complex wedge in $\G$.
\end{proposition}

Furthermore, we observe that the proof of Theorem \ref{thm: existe basico} could have been carried out in stages, removing one complex wedge at a time. Using this idea, we will show how to build graphs with adapted complex structures beginning with a basic graph and adding several complex wedges.

\medskip

Let $\B$ be a basic graph equipped with an adapted complex structure $J$ (which is necessarily abelian according to Proposition \ref{prop: basic}). Let us add pairs of edges to $\B$ in the following way: 
\begin{itemize}
\renewcommand{\labelitemi}{$\diamond$}
    \item choose a vertex $u$ and a pair of vertices $v,w$ with $w=Jv$ (and $u\neq v,\, u\neq w$),
    \item add the edges $\overline{uv}$ and $\overline{uw}$,
    \item repeat this process several times; note that the vertices $u,v,w$ can be used again, as long as they are used to add edges different from $\overline{uv}$ and $\overline{uw}$. 
\end{itemize}

Let us denote by $\widetilde{\B}$ the graph obtained after adding several of these wedges; we will call $\widetilde{\B}$ an \textit{expansion} of the basic graph $\B$. Next, we extend the adapted complex structure $J$ on $\B$ to an almost complex structure $\widetilde{J}$ on $\widetilde{\B}$. We only need to specify the action of $\widetilde{J}$ on the new edges, and we do this in the following way:
\begin{itemize}
\renewcommand{\labelitemi}{$\diamond$}
    \item label arbitrarily the vertices of $\B$ (or $\widetilde{\B}$) as $v_1,\ldots, v_n$,
    \item consider a pair of new edges $e_{i,k}=\overline{v_iv_k}$ and $e_{j,k}=\overline{v_jv_k}$ with $Jv_i=v_j$, and then set:
\begin{equation}\label{eq: Jtilde} 
\widetilde{J} e_{i,k}= \begin{cases}
    \phantom{-} e_{j,k}, \quad \text{if } k<i,j \text{ or } k>i,j; \\ 
    -e_{j,k}, \quad \text{if } i<k<j \text{ or } j<k<i,
\end{cases} \quad \text{and} \quad (\widetilde{J})^2=-\operatorname{Id}.
\end{equation}
\end{itemize}

\begin{theorem}\label{thm: expanded graph}
The almost complex structure $\widetilde{J}$ defined above is integrable; hence, it defines an adapted complex structure on the expansion $\widetilde{\B}$ of $\B$ which extends the adapted complex structure $J$ on $\B$.
\end{theorem}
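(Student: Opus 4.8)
The plan is to reduce the theorem to adding a single wedge at a time and then to argue by induction. Write $\widetilde{\B}$ as the last term of a chain $\B=\G_0\subseteq\G_1\subseteq\cdots\subseteq\G_m=\widetilde{\B}$, where $\G_\ell$ is obtained from $\G_{\ell-1}$ by adding one pair of edges $\seg{u_\ell v_\ell},\seg{u_\ell w_\ell}$ with $w_\ell=Jv_\ell$ and $u_\ell\notin\{v_\ell,w_\ell\}$, and let $J_\ell:=\widetilde{J}|_{\n_{\G_\ell}}$; this is well defined because the formula \eqref{eq: Jtilde} depends only on the fixed labelling of the vertices, not on which other edges are present. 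It then suffices to prove the following local statement and induct on $\ell$, the base case $\G_0=\B$ being precisely the hypothesis that $J$ is an adapted (hence, by Proposition \ref{prop: basic}, abelian) complex structure on the basic graph $\B$: \emph{if $(\G,J)$ is a graph with an adapted complex structure, $u,v,w$ are distinct vertices with $w=Jv$ and $\seg{uv},\seg{uw}\notin E(\G)$, and $\G^+,J^+$ are obtained by adding these two edges and extending $J$ via the rule \eqref{eq: Jtilde}, then $(\G^+,J^+)$ is again a graph with an adapted complex structure.} This is the operation inverse to the edge-removal of Lemma \ref{lem: remove}.

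Set $a:=\seg{uv}$ and $b:=\seg{uw}$, so that $\n_{\G^+}=\n_\G\oplus\operatorname{span}\{a,b\}$ as vector spaces, $J^+$ is block diagonal with respect to this decomposition and restricts to $J$ on $\n_\G$ (by adaptedness of $J$), and $a,b,J^+a,J^+b$ are all central in $\n_{\G^+}$. The whole point of the sign in \eqref{eq: Jtilde} is that it makes $(u;v,w)$ a complex wedge in $(\G^+,J^+)$, i.e. $[u,v]^+ + J^+[u,w]^+ = 0$, which here reads $a+J^+b=0$, equivalently $J^+a=b$, $J^+b=-a$. That $J^+$ is adapted is clear. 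Since $N_{J^+}$ is a tensor, integrability follows from checking $N_{J^+}(x,y)=0$ for $x,y$ in the basis of $\n_{\G^+}$ made of the vertices and edges. If one of $x,y$ lies in $\{a,b\}$ this is immediate from centrality. For $x,y\in\n_\G$, the only Lie brackets of basis elements that change from $\n_\G$ to $\n_{\G^+}$ are $[u,v]^+=a$ and $[u,w]^+=b$ (both zero in $\n_\G$), so writing $[p,q]^+=[p,q]_{\n_\G}+\beta(p,q)$, with $\beta$ the $\operatorname{span}\{a,b\}$-valued skew form supported on $\{u,v\}$ and $\{u,w\}$, one computes
\[ N_{J^+}(x,y)=N_J(x,y)+\big(\beta(x,y)+J^+\beta(Jx,y)+J^+\beta(x,Jy)-\beta(Jx,Jy)\big). \]

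The first summand is zero by the inductive hypothesis, so it remains to show that the bracketed expression $R(x,y)$ vanishes on all basis vectors of $\n_\G$. As $\beta$ is supported only on $\{u,v\}$ and $\{u,w\}$, $R(x,y)$ is automatically zero unless one of $x,y$ lies in $\{u,v,w,Ju\}$ (up to sign), and by skewness of $R$ this leaves only the pairs $\{u,v\},\{u,w\},\{v,w\},\{u,Ju\},\{v,Ju\},\{w,Ju\}$. Since $u\neq v$ and $u\neq w$, none of $\{Ju,v\},\{Ju,w\},\{v,w\},\{u,Ju\}$ equals $\{u,v\}$ or $\{u,w\}$, so on all those pairs every $\beta$-term is zero and $R$ vanishes; the only surviving cases are $\{u,v\}$ and $\{u,w\}$, where $R$ collapses exactly to the complex-wedge relation $a+J^+b=0$. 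Hence $R\equiv 0$, $N_{J^+}$ vanishes, and the induction goes through.

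I expect the genuine difficulty to lie in the bookkeeping, not in any single computation: one must be sure the two newly added edges are really new, so that $[u_\ell,v_\ell]=[u_\ell,w_\ell]=0$ in $\n_{\G_{\ell-1}}$ — this holds because every edge of the basic graph $\B$ is distinguished whereas $\seg{uv}$ is not (since $u\neq Jv=w$), and because the expansion procedure never adds the same edge twice — and one must verify that no coincidence among $u,v,w,Ju$ disrupts the finitely many cases of $R$. With these distinctness facts in hand, the vanishing of $R$ is a short finite check.
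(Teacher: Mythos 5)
Your overall strategy is sound and genuinely different from the paper's. The paper verifies $N_{\widetilde J}=0$ directly on the whole expansion, in two cases (a pair of vertices joined by a new edge, and a pair of non-adjacent vertices), using structural facts about which new edges can coexist in an expansion and then an extensive sign check based on \eqref{eq: Jtilde}. You instead add one complex wedge at a time and observe that the change in the Nijenhuis tensor is the local perturbation $R(x,y)=\beta(x,y)+J^+\beta(Jx,y)+J^+\beta(x,Jy)-\beta(Jx,Jy)$ supported near $u,v,w$; this is precisely the converse of the edge-removal Lemma \ref{lem: remove}, and, once carried out correctly, it is cleaner and more local than the paper's argument. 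Your bookkeeping remarks (the added edges are genuinely new, and the intermediate restrictions $J_\ell$ are well defined because \eqref{eq: Jtilde} only depends on the labelling) are fine.

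However, one step is wrong as stated: you claim that on the pairs $\{Ju,v\}$ and $\{Ju,w\}$ ``every $\beta$-term is zero.'' It is not, because $J(Ju)=-u$, $Jv=w$, $Jw=-v$ throw these pairs back onto the support of $\beta$. Explicitly,
\[
R(Ju,v)=J^+\beta(-u,v)-\beta(-u,w)=-J^+[u,v]^+ +[u,w]^+,\qquad
R(Ju,w)=J^+\beta(-u,w)-\beta(-u,-v)=-J^+[u,w]^+ -[u,v]^+,
\]
so these two cases vanish only through the same cancellation coming from the complex-wedge relation $[u,v]^+ +J^+[u,w]^+=0$ encoded in the sign of \eqref{eq: Jtilde}, not term by term; there are four surviving pairs, not two. (Equivalently, once $N_{J^+}(u,v)=N_{J^+}(u,w)=0$ are established, the pairs involving $Ju$ follow at once from the symmetries \eqref{eq: simetrias}, so the repair is immediate.) A smaller slip: for $R(x,y)\neq 0$ one in fact needs \emph{both} $x$ and $y$ to lie, up to sign, in $\{u,v,w,Ju\}$ -- ``one of $x,y$'' together with skew-symmetry does not by itself reduce the check to your six pairs, although the list you end with is the correct one. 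With these corrections the induction closes and your proof is complete.
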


The proof of this theorem is an easy consequence of the following result.

\begin{lemma}
Let $\G=\G(V,E)$ be a graph equipped with an adapted complex structure $J$, and let $\widetilde{\G}$ be the graph obtained from $\G$ after adding a complex wedge $(v_k;v_i,v_j)$ for some $v_k,v_i,v_j\in V$ with $Jv_i=v_j$. Then the almost complex structure $\widetilde{J}$ on $\widetilde{\G}$ defined as in \eqref{eq: Jtilde} is integrable. 
\end{lemma}

\begin{proof}
Let us denote $\n:=\n_\G$ and $\widetilde{\n}=\n_{\widetilde{\G}}$. For simplicity, denote by $x:=e_{i,k}=\overline{v_iv_k}$ and $y:=e_{j,k}=\overline{v_jv_k}$ the edges in the new complex wedge. Then $\widetilde{\n}=\n\oplus \a$ as vector spaces, where $\a$ is the central ideal generated by $x$ and $y$, and the Lie bracket on $\widetilde{\n}$ is the same as the one on $\n$, extended by $[v_i,v_k]=\pm x$, $[v_j,v_k]=\pm y$, the signs depending on the enumeration of the vertices. This implies that the Chevalley-Eilenberg differential $d$ for $\alt^*\n^*$ is the restriction of the Chevalley-Eilenberg differential $d$ for $\alt^*\widetilde{\n}^*$.

Moreover, we have that $\widetilde{J}v_i=v_j$ and $\widetilde{J}x=\pm y$, according to \eqref{eq: Jtilde}, so that the decomposition $\widetilde{\n}=\n\oplus \a$ is $J$-invariant. We will verify the integrability of $\widetilde{J}$ on $\widetilde{\n}$ by showing that \eqref{eq: diff} holds for $\widetilde{\n}$. We point out that if $\alpha$ is a $(1,0)$-form on $(\n^\C)^*$ with respect to $J$ then $\alpha$ is still a $(1,0)$-form on $(\widetilde{\n}^\C)^*$ with respect to $\widetilde{J}$. Therefore, the component $(d\alpha)^{0,2}=0$, since $J$ is integrable on $\n$. 

Hence, we only need to check that if $\gamma$ is a generator of $\a^{*1,0}$, then the $(d\gamma)^{0,2}=0$. Let $V\cup E\cup \{x,y\}$ be the basis of $\widetilde{\n}$ and let $V^*\cup E^* \cup \{x^*,y^*\}$ be the dual basis  of $\widetilde{\n}^*$, with $x^*,y^*$ corresponding to $x,y$. Then, we have to consider some cases:
\begin{enumerate}
\item[$\ri$] $k<i,j$: in this case we have $[v_k,v_i]=x,\, [v_k,v_j]=y$ and $Jx=y$, hence we may choose $\gamma=x^*+\sqrt{-1} y^*$ as the generator of $\a^{*1,0}$. Then
\begin{align*}
d\gamma & = dx^*+\sqrt{-1}\, dy^* \\
        & = -v^k\wedge v^i- \sqrt{-1} v^k\wedge v^j \\
        & = -v^k \wedge (v^i+\sqrt{-1} v^j). 
\end{align*}
Since $v^i+\sqrt{-1} v^j$ is $(1,0)$, we have that $(d\gamma)^{0,2}=0$.

\item[$\rii$] $k>i,j$: the proof is similar to the one above, changing only $[v_k,v_i]=-x,\, [v_k,v_j]=-y$.

\item[$\riii$] $i<k<j$: in this case we have $[v_k,v_i]=-x,\, [v_k,v_j]=y$ and $Jx=-y$, hence we may choose $\gamma=y^*+\sqrt{-1} x^*$ as the generator of $\a^{*1,0}$. Then
\begin{align*}
d\gamma & = dy^*+\sqrt{-1}\, dx^* \\
        & = -v^k\wedge v^j+ \sqrt{-1} v^k\wedge v^i \\
        & = \sqrt{-1}\, v^k \wedge (v^i+\sqrt{-1} v^j).
\end{align*}
Since $v^i+\sqrt{-1} v^j$ is $(1,0)$, we have that $(d\gamma)^{0,2}=0$.

\item[$\riv$] $j<k<i$: the proof is similar to the one above.
\end{enumerate}
\end{proof}

\begin{example}
Consider the basic graph $\B=(1,1,1)$, where the subgraphs $\A_1$, $\A_2$ and $\A_3$ have vertex sets $\{v_1,v_2\}$, $\{v_3,v_4,v_5\}$ and $\{v_6,v_7,v_8,v_9\}$, respectively, as shown in the figure below.
\begin{center}
\begin{tikzpicture}[scale=1]\footnotesize
    \node (1) at (-2.5,-0.5) [draw,circle,inner sep=1pt,fill=black!100,label=180:{$v_1$}] {};
    \node (2) at (-2.5,1) [draw,circle,inner sep=1pt,fill=black!100,label=180:{$v_2$}] {};
    \node (3) at (-1,2) [draw,circle,inner sep=1pt,fill=black!100,label=90:{$v_3$}] {};
    \node (4) at (1,2) [draw,circle,inner sep=1pt,fill=black!100,label=90:{$v_4$}] {};
     \node (5) at (0,0.7) [draw,circle,inner sep=1pt,fill=black!100,label=270:{$v_5$}] {};
      \node (6) at (2.5,-0.5) [draw,circle,inner sep=1pt,fill=black!100,label=270:{$v_6$}] {};
       \node (7) at (2.5,1) [draw,circle,inner sep=1pt,fill=black!100,label=90:{$v_7$}] {};
        \node (8) at (4.5,-0.5) [draw,circle,inner sep=1pt,fill=black!100,label=270:{$v_8$}] {};
     \node (9) at (4.5,1) [draw,circle,inner sep=1pt,fill=black!100,label=90:{$v_9$}] {};
     
    \path
    (3) edge (4)
    (6) edge (7)
    (8) edge (9);
 \end{tikzpicture}
 \end{center}
There is an (abelian) adapted complex structure $J$ on $\B$, obtained as in Example \ref{ex: first examples}. By adding various complex wedges (in dashed lines in the figure below), we obtain the following connected expansion $\widetilde{\B}$ of $\B$, which has an adapted complex structure $\widetilde{J}$.
\begin{center}
 \begin{tikzpicture}[scale=1]\footnotesize
    \node (1) at (-2,-0.5) [draw,circle,inner sep=1pt,fill=black!100,label=180:{$v_1$}] {};
    \node (2) at (-2,1) [draw,circle,inner sep=1pt,fill=black!100,label=180:{$v_2$}] {};
    \node (3) at (-1,2) [draw,circle,inner sep=1pt,fill=black!100,label=90:{$v_3$}] {};
    \node (4) at (1,2) [draw,circle,inner sep=1pt,fill=black!100,label=90:{$v_4$}] {};
     \node (5) at (0,0.7) [draw,circle,inner sep=1pt,fill=black!100,label=90:{$v_5$}] {};
      \node (6) at (2.5,-0.5) [draw,circle,inner sep=1pt,fill=black!100,label=270:{$v_6$}] {};
       \node (7) at (2.5,1) [draw,circle,inner sep=1pt,fill=black!100,label=90:{$v_7$}] {};
        \node (8) at (4.5,-0.5) [draw,circle,inner sep=1pt,fill=black!100,label=270:{$v_8$}] {};
     \node (9) at (4.5,1) [draw,circle,inner sep=1pt,fill=black!100,label=90:{$v_9$}] {};

    \path
    (3) edge  (4)
    (6) edge  (7)
    (8) edge  (9)
    (2) edge [dashed] (3)
    (2) edge [dashed] (4)
    (1) edge [dashed] (5)
    (2) edge [dashed] (5)
    (5) edge [dashed] (6)
    (5) edge [dashed] (7)
    (7) edge [dashed] (8)
    (7) edge [dashed] (9);  
 \end{tikzpicture}
 \end{center}
According to \eqref{eq: Jtilde}, the complex structure $\widetilde{J}$ is given by:
\begin{multicols}{3}
\noindent $\widetilde{J}v_1=v_2$,\\
$\widetilde{J}v_3=v_4$,\\
$\widetilde{J}v_5=e_{3,4}$,\\
$\widetilde{J}v_6=v_7$,\\
$\widetilde{J}v_8=v_9$,\\
$\widetilde{J}e_{6,7}=e_{8,9}$,\\
$\widetilde{J}e_{1,5}=e_{2,5}$,\\
$\widetilde{J}e_{2,3}=e_{2,4}$,\\
$\widetilde{J}e_{5,6}=e_{5,7}$,\\
$\widetilde{J}e_{7,8}=e_{7,9}$.\\
 \end{multicols}
\end{example}

\medskip

It follows from Theorems \ref{thm: existe basico} and \ref{thm: expanded graph} (together with Proposition \ref{prop:wedge}) that:
\begin{enumerate}[(a)]
    \item given a graph $\G$ with an adapted complex structure $J$, if $\G_b$ is the basic subgraph associated with $(\G,J)$ and $J_b$ is the restriction of $J$ to $\G_b$, then an appropriate expansion of $(\G_b,J_b)$ recovers $(\G,J)$;
    \item given a basic graph $\B$ with an adapted complex structure $J$, if $(\widetilde{\B}, \widetilde{J})$ is any expansion of $(\B,J)$ then the basic subgraph associated with $(\widetilde{\B}, \widetilde{J})$ is $\B$.
\end{enumerate}

\medskip

As a consequence, we may state the main result of this section:

\begin{theorem}\label{thm:characterization}
Any graph equipped with an adapted complex structure is an expansion of a basic graph.
\end{theorem}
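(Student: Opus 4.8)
The plan is to deduce this directly from Theorems \ref{thm: existe basico} and \ref{thm: expanded graph}, so the argument is mostly a matter of assembling those two results carefully. Let $(\G,J)$ be a graph equipped with an adapted complex structure. First I would apply Theorem \ref{thm: existe basico} to obtain the basic subgraph $\G_b$ associated with $(\G,J)$: it is the $J$-invariant spanning subgraph of $\G$ whose edge set is exactly the set of distinguished edges of $\G$, and the restriction $J_b$ of $J$ to $\n_{\G_b}$ is an adapted complex structure (necessarily abelian, by Proposition \ref{prop: basic}). Fix once and for all an arbitrary labeling $v_1,\ldots,v_n$ of $V(\G)=V(\G_b)$.

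Next I would exhibit $\G$ as an expansion of $\G_b$. Every edge of $\G$ that does not belong to $\G_b$ is non-distinguished, hence by Proposition \ref{prop: endpoint} it lies in a complex wedge of $\G$, and this complex wedge is unique (as noted in the proof of Theorem \ref{thm: existe basico}). Consequently the non-distinguished edges of $\G$ are partitioned into pairs $\{\seg{uv},\seg{uw}\}$ with $w=Jv$, each such pair forming a complex wedge $(u;v,w)$ whose three vertices all lie in $V(\G)=V(\G_b)$. Adjoining these pairs to $\G_b$, one complex wedge at a time, is precisely the expansion procedure described in this section (choose $u$ and a pair $v,w=Jv$, add $\seg{uv}$ and $\seg{uw}$), and the possibility of reusing vertices poses no problem since the pairs of non-distinguished edges are pairwise distinct; thus, as a graph, $\G$ is an expansion of $\G_b$.

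Finally I would check that the adapted complex structure $\widetilde{J}$ produced by Theorem \ref{thm: expanded graph} from $(\G_b,J_b)$, with respect to the labeling fixed above, coincides with $J$. On $\n_{\G_b}$ this is immediate, since by Theorem \ref{thm: expanded graph} the structure $\widetilde{J}$ extends $J_b$. On each adjoined pair of edges $e_{i,k},e_{j,k}$ (with $Jv_i=v_j$), $\widetilde{J}$ is defined by the sign rule \eqref{eq: Jtilde}; but since $(v_k;v_i,v_j)$ is a complex wedge of $\G$ with respect to the given $J$, the equivalent description of complex wedges recorded earlier forces $J$ to satisfy that very same sign rule on $e_{i,k}$. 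Hence $\widetilde{J}=J$ on all of $\n_\G$, so the expansion of $(\G_b,J_b)$ just constructed recovers $(\G,J)$, which is the assertion, with $\G_b$ the promised basic graph.

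The whole argument is routine bookkeeping once Theorems \ref{thm: existe basico} and \ref{thm: expanded graph} are in hand; the one point that deserves care is the last one, namely reconciling the orientation conventions in \eqref{eq: Jtilde} with the actual action of $J$ on the non-distinguished edges. That reconciliation is exactly what the ``if and only if'' characterization of complex wedges via \eqref{eq: Jtilde} provides, so no genuine obstacle remains.
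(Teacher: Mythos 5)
Your proposal is correct and follows essentially the same route as the paper, which derives Theorem \ref{thm:characterization} directly from Theorems \ref{thm: existe basico} and \ref{thm: expanded graph} by noting that the non-distinguished edges decompose into complex wedges and that the expansion of $(\G_b,J_b)$ recovers $(\G,J)$. Your extra care in checking that \eqref{eq: Jtilde} reproduces the original $J$ on the added edges (via the wedge characterization) is exactly the point the paper leaves implicit in item (i) preceding the theorem.
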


Thus, we have obtained a simple method with which we can produce \textit{all} graphs that admit an adapted complex structure. In this way, we are able to easily obtain many examples of 2-step nilpotent Lie algebras endowed with complex structures. 

However, it is not clear yet how to determine whether a given graph admits an adapted complex structure or not. In certain particular cases, as in Example \ref{ex:moser} below, this is an easy task, but in more general and sophisticated graphs we still cannot tell in a simple way if such a complex structure exists.

\begin{example}\label{ex:moser}
The graph $M$ in Figure \ref{fig:moser} below is known as the \textit{Moser spindle}, and has had several applications in graph theory. Its graphic depiction allows us to easily see that it admits an adapted complex structure. Indeed, we can choose the vertices $v_1,v_6,v_7$ and the edge $e_{6,7}$ as the vertices and edges of a subgraph $\A_2$, and the vertices $v_2,v_3,v_4,v_5$ and the edges $e_{2,3}, e_{4,5}$ as the vertices and edges of a subgraph $\A_3$. We can consider the other edges in $M$ as added in the expansion process. We define a complex structure on these subgraphs as in Example \ref{ex: first examples}, and then expand it to the whole of $M$ using \eqref{eq: Jtilde}. Thus, an adapted complex structure on $M$ is given by
\begin{gather*} 
Jv_1=e_{6,7}, \quad Jv_6=v_7, \quad Jv_2=v_3, \quad Jv_4=v_5, \quad Je_{2,3}=e_{4,5},   \\
Je_{1,2}=e_{1,3}, \quad Je_{6,2}=e_{6,3}, \quad Je_{1,4}=e_{1,5}, \quad Je_{7,4}=e_{7,5}.
\end{gather*}

\begin{figure}[h]
\begin{tikzpicture}[scale=2]\footnotesize

\coordinate (A) at (0,0);
\coordinate (B) at (-0.92,-0.38);
\coordinate (C) at (-0.38,-0.92);
\coordinate (D) at (0.38,-0.92);
\coordinate (E) at (0.92,-0.38); 
\coordinate (F) at (-1.3,-1.3);
\coordinate (G) at (1.3,-1.3);


\draw (A)--(B)--(C)--cycle;
\draw (B)--(C)--(F)--cycle;
\draw (A)--(E)--(D)--cycle;
\draw (D)--(G)--(E)--cycle;
\draw (F)--(G);

\foreach \p in {A,B,C,D,E,F,G}
  \fill (\p) circle (1pt);

\node[above] at (A) {$v_1$};
\node[left] at (B) {$v_2$};
\node[below] at (C) {$v_3$};
\node[below] at (D) {$v_4$};
\node[right] at (E) {$v_5$};
\node[left] at (F) {$v_6$};
\node[right] at (G) {$v_7$};

\end{tikzpicture}
\caption{Moser spindle}
    \label{fig:moser}
\end{figure}
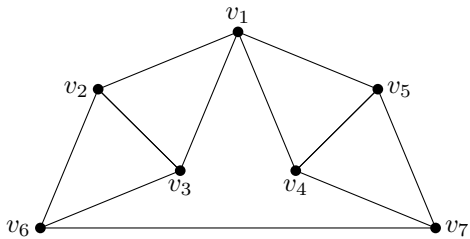
\end{example}

\begin{example}
In this example, we show that, beginning with a basic graph endowed with an adapted complex structure, we can expand it in two different ways so that the resulting graphs are isomorphic, but the expanded complex structures are not equivalent. 
Consider the basic graph $\B=(3,0,0)$, that is, the empty graph with vertices $V=\{v_1,\ldots,v_6\}$ with $Jv_{2i-1}=v_{2i}$, $1\leq i \leq 3$. For the first expansion, we add the complex wedges $(v_1;v_3,v_4)$ and $(v_5;v_3,v_4)$, obtaining in this way the graph $\G_1$ with adapted complex structure $J_1$. For the second expansion, we add the complex wedges $(v_1;v_3,v_4)$ and $(v_2;v_3,v_4)$, obtaining in this way the graph $\G_2$ with adapted complex structure $J_2$. See Figure \ref{fig: 2expansiones}.
\begin{figure}[h]
 \begin{tikzpicture}[scale=1.0]\footnotesize
    \node (1) at (0,1) [draw,circle,inner sep=1pt,fill=black!100,label=90:{$v_1$}] {};
    \node (2) at (0,0) [draw,circle,inner sep=1pt,fill=black!100,label=270:{$v_2$}] {};
    \node (3) at (1.5,1) [draw,circle,inner sep=1pt,fill=black!100,label=90:{$v_3$}] {};
    \node (4) at (1.5,0) [draw,circle,inner sep=1pt,fill=black!100,label=270:{$v_4$}] {};
    \node (5) at (3,1) [draw,circle,inner sep=1pt,fill=black!100,label=90:{$v_5$}] {};
    \node (6) at (3,0) [draw,circle,inner sep=1pt,fill=black!100,label=270:{$v_6$}] {};

    \path
    (1) edge (3)
    (1) edge (4)
    (3) edge (5)
    (5) edge (4);
 \end{tikzpicture}
 \hspace{3cm}
\begin{tikzpicture}[scale=1.0]\footnotesize
    \node (1) at (0,1) [draw,circle,inner sep=1pt,fill=black!100,label=90:{$v_1$}] {};
    \node (2) at (0,0) [draw,circle,inner sep=1pt,fill=black!100,label=270:{$v_2$}] {};
    \node (3) at (1.5,1) [draw,circle,inner sep=1pt,fill=black!100,label=90:{$v_3$}] {};
    \node (4) at (1.5,0) [draw,circle,inner sep=1pt,fill=black!100,label=270:{$v_4$}] {};
    \node (5) at (3,1) [draw,circle,inner sep=1pt,fill=black!100,label=90:{$v_5$}] {};
    \node (6) at (3,0) [draw,circle,inner sep=1pt,fill=black!100,label=270:{$v_6$}] {};

    \path
    (1) edge (3)
    (1) edge (4)
    (3) edge (2)
    (2) edge (4);
 \end{tikzpicture}
 \caption{Expanded graph $\G_1$ (left) and expanded graph $\G_2$ (right)}
    \label{fig: 2expansiones}
 \end{figure}
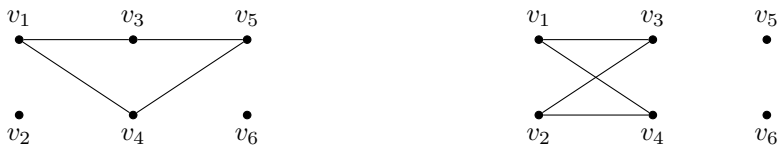
 Clearly, the graphs $\G_1$ and $\G_2$ are isomorphic, and therefore, the Lie algebras $\n_{\G_1}$ and $\n_{\G_2}$ are isomorphic. However, the complex structures $J_1$ and $J_2$ are not equivalent, since in $\n_{\G_1}$ the center is not $J_1$-invariant but in $\n_{\G_2}$ the center is $J_2$-invariant.
\end{example}

We end this section showing that, among a very well known family of graphs, only two of them admit adapted complex structures.

\begin{corollary}\label{cor:cycle}
The $n$-cycle $C_n$ admits an adapted complex structure if and only if $n=3$ or $n=4$. 
\end{corollary}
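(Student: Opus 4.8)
The plan is to derive the statement from the structural characterization of graphs carrying adapted complex structures (Theorem~\ref{thm:characterization}), together with the elementary fact that every vertex of $C_n$ has degree exactly $2$. The ``if'' direction is immediate: $C_4=K_{2,2}$ admits an adapted complex structure by Proposition~\ref{prop: bipartite} (case $n=m=1$), while $C_3$ is the expansion of the basic graph $\A_2$ obtained by adding the single wedge $(v_3;v_1,v_2)$ with $v_2=Jv_1$ --- this attaches the edges $\seg{v_3v_1}$ and $\seg{v_3v_2}$ to the edge $\seg{v_1v_2}$, producing the triangle --- so Theorem~\ref{thm: expanded graph} supplies an adapted complex structure on $C_3$.

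For the ``only if'' direction, suppose $n\geq 5$ and that $C_n$ admits an adapted complex structure $J$. By Theorem~\ref{thm:characterization} I may view $C_n$ as an expansion of a basic graph, so its edges split into distinguished edges (those of the underlying basic subgraph) and non-distinguished ones, each of the latter lying in exactly one of the wedges added in the expansion. A preliminary remark I would record: since the center $u$ of an added wedge $(u;v,w)$ receives precisely the two new edges $\seg{uv},\seg{uw}$ and $\deg_{C_n}(u)=2$, these are the only edges at $u$, so the two edges of a wedge are consecutive along the cycle.

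First I would show $C_n$ has no distinguished edge. If $\seg{pq}$ were distinguished with $q=Jp$, the second edge at $p$ (unique, since $\deg p=2$) cannot be distinguished, as two distinguished edges are never adjacent; hence it lies in a wedge $W$. Since the two edges of $W$ both meet its center, $p$ is not the center of $W$ (otherwise $\seg{pq}$ would be an edge of $W$, hence non-distinguished); so $p$ is an endpoint of $W$ and, the remaining neighbour $r$ of $p$ being forced to be the center, $W=(r;p,q)$ (here I use that wedges have the form $(u;v,Jv)$ and that $q=Jp$). Then $p,q,r$ carry the three edges $\seg{pq},\seg{pr},\seg{qr}$ and, all of them having degree $2$, they form a whole connected component of $C_n$ isomorphic to $C_3$ --- contradicting connectedness of $C_n$ and $n\geq 5$. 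Consequently every edge of $C_n$ lies in a wedge, so the wedges partition the cyclic sequence of $n$ edges into consecutive pairs, which already forces $n$ to be even.

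The last step is a short parity computation. Label the vertices $x_0,\dots,x_{n-1}$ cyclically (indices mod $n$) and relabel so that the wedges are $\{\seg{x_{2i}x_{2i+1}},\seg{x_{2i+1}x_{2i+2}}\}$; then the $i$-th wedge has center $x_{2i+1}$ and endpoints $x_{2i},x_{2i+2}$, whence $Jx_{2i}=\varepsilon_i\,x_{2i+2}$ with $\varepsilon_i\in\{\pm1\}$ for every $i$. Applying $J$ again and using $J^2=-\operatorname{Id}$ gives $-x_{2i}=\varepsilon_i\varepsilon_{i+1}\,x_{2i+4}$; since $x_0,\dots,x_{n-1}$ are linearly independent in $\n_{C_n}$, this forces $x_{2i+4}=\pm x_{2i}$, i.e. $2i+4\equiv 2i\pmod n$, hence $n\mid 4$ --- impossible for $n\geq 5$. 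I expect the delicate point to be the ``no distinguished edge'' step: one must argue precisely that the extra neighbour $r$ of $p$ is exactly the center of the wedge through the non-distinguished edge at $p$, that the other edge of that wedge is $\seg{qr}$ (using the normal form $(u;v,Jv)$ together with $q=Jp$), and that the degree-$2$ condition then seals $\{p,q,r\}$ off as a full $C_3$-component; everything else is routine once the expansion picture is in place.
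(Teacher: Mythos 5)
Your proof is correct and follows essentially the same route as the paper: the existence part uses the same expansion/bipartite constructions, and for $n\geq 5$ you, like the paper, invoke the basic-subgraph/expansion characterization together with the degree-$2$ constraint to rule out distinguished edges via a forbidden triangle, then force a contradiction from the wedge structure on the remaining $\A_1$ pairs. The only difference is cosmetic at the finish: the paper produces a forbidden $4$-cycle from two complex wedges sharing the same $\A_1$ pair, whereas you iterate $Jx_{2i}=\pm x_{2i+2}$ and use $J^2=-\operatorname{Id}$ to force $n\mid 4$, which amounts to the same obstruction.
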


\begin{proof}
We label $v_1,\ldots,v_n$ the vertices of $C_n$ in a cyclic way and denote the edges of $C_n$ by $a_{i}=\overline{v_iv_{i+1}}$ ($i=1,\ldots,n-1$) and $a_n=\overline{v_1v_n}$. As usual, we set the Lie bracket on $\n_{C_n}$ as $[v_i,v_{i+1}]=a_i$, $i=1,\ldots, n-1$, and $[v_1,v_n]=a_n$.

If $n=3$, it can easily be verified that the almost complex structure $J$ on $\n_{C_3}$ given by $Jv_1=a_2, \, Jv_2=v_3, \, Ja_1=a_3$ is indeed integrable, so that $J$ is an adapted complex structure. 

If $n=4$, we define an almost complex structure $J$ on $\n_{C_4}$ by
\[ Jv_1=v_3, \, Jv_2=v_4, \, Ja_2=a_1, \, Ja_4=a_3.  \]
We easily see that $J$ is integrable, so it is an adapted complex structure on $\n_{C_4}$. 

We will show next that there are no adapted complex structures on $\n_{C_n}$ when $n\geq 5$. Assume by contradiction that $C_n$, with $n\geq 5$, admits an adapted complex structure, and let $G_b=(p_1,p_2,p_3)$ be the corresponding basic subgraph from Theorem \ref{thm: existe basico}. We first prove that $p_2=p_3=0$. Indeed, if $p_2\neq 0$ and $v_i, v_j$ were the non-isolated vertices in an $\A_2$ subgraph, then $v_i$ cannot be the center of a complex wedge (since in this case we would have that $\deg(v_i)>2$). Now, as $\deg(v_i)=2$, there is a complex wedge with center $v_k$ ($k\neq i,j)$ that has $v_i$ and $v_j$ as endpoints, but then $v_k,v_i,v_j$ are the vertices of a 3-cycle in $\G$, which is not possible. Thus $p_2=0$. In an analogous way, we prove that $p_3=0$.

Therefore, $\G_b=(p_1,0,0)$ and then $n=2p_1$ is even, and clearly $p_1\geq 2$. Let $(v_k;v_i,v_j)$ be a complex wedge in $\G$, with $v_i,v_j$ the vertices of an $\A_1$ subgraph. Then, $v_i$ and $v_j$ cannot be the center of any other complex wedge in $\G$, because if $v_i$ were the center of a complex wedge then $\deg(v_i)$ would be greater than 2, which is impossible. Now, since $\deg(v_i)=\deg(v_j)=2$, there must be another complex wedge $(v_l;v_i,v_j)$, but now $v_k,v_i,v_l,v_j$ are the vertices of a 4-cycle in $\G$, which is not possible. 
\end{proof}

\section{Further results on adapted complex structures}\label{sec:further}

In this section, we establish some consequences of the characterization of graphs with adapted complex structures in terms of basic graphs given earlier.

\begin{lemma}\label{lem:misc}
Let $\G$ be a graph equipped with an adapted complex structure $J$, and let $\G_b=(p_1,p_2,p_3)$ the associated basic subgraph.
\begin{enumerate}
    \item[$\ri$] If $v$ is a vertex of $\G$ that in $\G_b$ is the isolated vertex of a subgraph $\A_2$ then $\deg (v)$ is even.
    \item[$\rii$] $p_2$ and $|V|$ have the same parity. In particular, if $|V|$ is odd then $p_2\geq 1$.
\end{enumerate}
\end{lemma}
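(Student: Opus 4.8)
The plan is to deduce both parts from results already established. For part $\ri$, I would first recall that, by Theorem~\ref{thm: existe basico}, the basic subgraph $\G_b$ is $J$-invariant and the restriction $J_b=J|_{\G_b}$ is an adapted complex structure on $\G_b$; by Proposition~\ref{prop: basic}, each connected component of $\G_b$ is a copy of $\A_1$, $\A_2$ or $\A_3$ equipped with the structure of Example~\ref{ex: first examples}. Consequently, if $v$ is the isolated vertex of an $\A_2$-component of $\G_b$, then $Jv$ equals, up to sign, the unique edge of that component. Since the commutator ideal of $\n_\G$ is spanned by the edges of $\G$, that edge — and hence $Jv$ — is a central element of $\n_\G$. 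Now Lemma~\ref{lem:central}, applied to the vertex $v$ in $\G$, yields that $N_v$ is $J$-invariant and therefore $\deg(v)$ is even.

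For part $\rii$, the plan is a short parity count. Because $\G_b$ is a \emph{spanning} subgraph of $\G$, the two graphs have the same vertex set, so $|V|=|V(\G_b)|$; and by the remark following Definition~\ref{def: basic} we have $|V(\G_b)|=2p_1+3p_2+4p_3$. Reducing this identity modulo $2$ gives $|V|\equiv p_2 \pmod 2$, which is the first claim. The ``in particular'' statement follows at once: if $|V|$ is odd then $p_2$ must be odd, hence $p_2\geq 1$.

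I do not expect a genuine obstacle in either part, since each is essentially a corollary of earlier material. The only subtlety is making the phrase ``the isolated vertex of a subgraph $\A_2$'' precise in part $\ri$: it refers to the vertex sent by $J_b$ (equivalently, by $J$) to a scalar multiple of the edge of that $\A_2$-component, which is legitimate precisely because Proposition~\ref{prop: basic} pins down the adapted complex structure on a basic graph to be the one exhibited in Example~\ref{ex: first examples}. Once that identification is in place, the appeal to Lemma~\ref{lem:central} is immediate.
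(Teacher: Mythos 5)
Your proof is correct. Part $\rii$ is exactly the paper's argument: $\G_b$ is spanning, so $|V|=2p_1+3p_2+4p_3$, and reducing mod $2$ gives the claim. For part $\ri$ you take a slightly different route from the paper. You make explicit that, in the $J$-determined decomposition of $\G_b$, the isolated vertex $v$ of an $\A_2$-copy satisfies $Jv=\pm e$ for the distinguished edge $e$ of that copy, hence $Jv$ lies in the commutator ideal and is central in $\n_\G$; you then quote Lemma \ref{lem:central} to conclude that $N_v$ is $J$-invariant and $\deg(v)$ is even. The paper instead argues directly with the expansion picture: since $v$ carries no distinguished edge, every edge $\seg{vw}$ at $v$ sits in a complex wedge $(v;w,Jw)$ (by Proposition \ref{prop: endpoint}, $v$ must be the center, as $Jv$ is not a vertex), so the edges at $v$ pair off and $\deg(v)$ is even. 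The two arguments rest on the same mechanism — the neighbors of $v$ pair up under $J$ — but yours reuses the Section 4 lemma (whose proof is precisely that pairing) while the paper re-derives it via complex wedges; your version is marginally more economical and also spells out the point, left implicit in the statement of the lemma, that ``isolated vertex of an $\A_2$'' means the vertex mapped by $J$ to $\pm$ the edge of that copy.
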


\begin{proof}
(i) If $v$ is still isolated in $\G$ then $\deg(v)=0$, which is even. 
If $v$ is not isolated in $\G$, then all edges that have $v$ as an endpoint come in pairs, since if $v$ is connected to the vertex $w$ then this edge is part of a complex wedge of the form $(v;w,Jw)$, that is, there is also an edge connecting $v$ with $Jw$. Thus, $\deg (v)$ is even.

(ii) Since $\G_b$ is a spanning subgraph of $\G$ then we have that
\[ |V|=2p_1+3p_2+4p_3, \]
and the result follows.
\end{proof}

As a consequence of Lemma \ref{lem:misc}(i), if all the vertices of a graph $\G$ equipped with an adapted complex structure are odd, then in the corresponding basic subgraph $\G_b$ there is no subgraph $\A_2$, that is, $p_2=0$. In the following example, we exhibit a graph with an adapted complex structure whose vertices are all odd.

\begin{example}\label{ex:cubic}
We show a graph $\G$ equipped with an adapted complex structure such that all vertices are odd; more precisely, all have degree equal to 3 (that is, $\G$ is a 3-regular graph).
Consider the basic graph $\B=(4,0,0)$, where each copy of $\A_1$ is given by vertices $v_{2i-1},v_{2i}$, with $Jv_{2i-1}=v_{2i}$, for $1\leq i\leq 4$. We consider the graph $\G=\widetilde{\B}$ obtained by expanding $\B$ with the following complex wedges:
\[ (v_6;v_1,v_2), \, (v_8;v_3,v_4), \, (v_2;v_3,v_4), \, (v_5;v_3,v_4), \, (v_7;v_5,v_6), \, (v_1;v_7,v_8).  \]
\begin{center}
 \begin{tikzpicture}\footnotesize
    \node (1) at (-1,1) [draw,circle,inner sep=1pt,fill=black!100,label=90:{$v_2$}] {};
    \node (2) at (0,1) [draw,circle,inner sep=1pt,fill=black!100,label=90:{$v_3$}] {};
    \node (3) at (1,0) [draw,circle,inner sep=1pt,fill=black!100,label=0:{$v_4$}] {};
    \node (4) at (1,-1) [draw,circle,inner sep=1pt,fill=black!100,label=0:{$v_5$}] {};
    \node (5) at (0,-2) [draw,circle,inner sep=1pt,fill=black!100,label=270:{$v_6$}] {};
    \node (6) at (-1,-2) [draw,circle,inner sep=1pt,fill=black!100,label=270:{$v_7$}] {};
    \node (7) at (-2,-1) [draw,circle,inner sep=1pt,fill=black!100,label=180:{$v_8$}] {};
    \node (8) at (-2,0) [draw,circle,inner sep=1pt,fill=black!100,label=180:{$v_1$}] {};
    \path
    (1) edge [-, black] (2)
    (1) edge [-, black] (3)
    (1) edge [-, black] (5)
    (2) edge [-, black] (7)
    (2) edge [-, black] (4)
    (3) edge [-, black] (4)
    (3) edge [-, black] (7)
    (4) edge [-, black] (6)
    (5) edge [-, black] (6)
    (5) edge [-, black] (8)
    (8) edge [-, black] (7)
    (8) edge [-, black] (6);
    \end{tikzpicture}   
\end{center}
The action of the complex structure $\widetilde{J}$ on the edges of $\G$ is easily established using \eqref{eq: Jtilde}.
\end{example}

\begin{remark}
The graph appearing in the previous example is a 3-regular (or cubic) graph with 8 vertices. Connected cubic graphs with up to 12 vertices were classified in \cite{BCCS}, and in particular there are 5 with 8 vertices. The graph in Example \ref{ex:cubic} is isomorphic to the following one (appearing in that classification), where the labeling of the vertices specifies the isomorphism between the two graphs: 
\begin{center}
\begin{tikzpicture}[scale=2.2]\footnotesize

\foreach \i in {0,...,7}{
  \coordinate (P\i) at (90+45*\i:3/4); 
}

\draw (P0)--(P1)--(P2)--(P3)--(P4)--(P5)--(P6)--(P7)--cycle;

  \filldraw[draw=black] (P1) circle (0.5pt); 
  \node[font=\small,label=180:{$v_3$}] at (P1) {};
  \filldraw[draw=black] (P2) circle (0.5pt); 
  \node[font=\small,label=180:{$v_8$}] at (P2) {};
  \filldraw[draw=black] (P3) circle (0.5pt); 
  \node[font=\small,label=180:{$v_1$}] at (P3) {};
  \filldraw[draw=black] (P4) circle (0.5pt); 
  \node[font=\small,label=270:{$v_7$}] at (P4) {};
  \filldraw[draw=black] (P5) circle (0.5pt); 
  \node[font=\small,label=0:{$v_6$}] at (P5) {};
  \filldraw[draw=black] (P6) circle (0.5pt); 
  \node[font=\small,label=0:{$v_2$}] at (P6) {};
  \filldraw[draw=black] (P7) circle (0.5pt); 
  \node[font=\small,label=0:{$v_4$}] at (P7) {};
  \filldraw[draw=black] (P0) circle (0.5pt); 
  \node[font=\small,label=90:{$v_5$}] at (P0) {};

\draw[black] (P0)--(P4);
\draw[black] (P3)--(P5);
\draw[black] (P2)--(P7);
\draw[black] (P1)--(P6);

\end{tikzpicture}
\end{center}
\end{remark}

As a consequence of Lemma \ref{lem:misc}, we show that the graphs in a well-known family do not admit adapted complex structures.  

\begin{example}
We exhibit an infinite family of connected graphs that do not admit adapted complex structures. For $n\geq 2$, let $W_{2n}$ denote the wheel with $2n$ vertices, that is, $V=\{v_0,\ldots, v_{2n-1}\}$ where $v_1,\ldots, v_{2n-1}$ are the vertices of a $(2n-1)$-cycle $C_{2n-1}$, and $v_0$ is adjacent to $v_i$ for $1\leq i\leq 2n-1$ (see Figure \ref{fig:W8} for $n=4$). The edges in the cycle are $e_{1,2}, e_{2,3},\ldots, e_{2n-1,1}$, and the other edges are $e_{0,i}$ for $1\leq i\leq 2n-1$. 
Therefore, $|V|=2n$ and $|E|=4n-2$, so that $\dim \n_{W_{2n}}=6n-2$. Note that $\deg(v_0)=2n-1$ and $\deg(v_i)=3$ for $1\leq i\leq 2n-1$. Thus, all vertices are odd. 

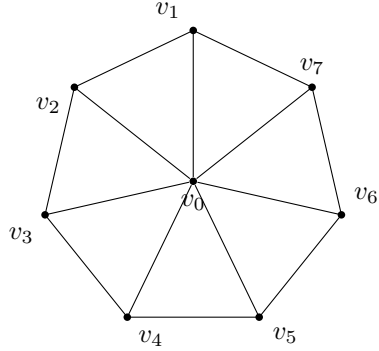
\begin{figure}[h]
\begin{tikzpicture}[scale=2]\footnotesize

\def\n{7}
\def\r{1} 

\node[fill=black,circle,inner sep=1pt,label=below:$v_0$] (v0) at (0,0) {};

\foreach \i in {1,...,\n} {
  \coordinate (v\i) at ({\r*cos(90+360/\n*(\i-1))}, {\r*sin(90+360/\n*(\i-1))});
  \node[fill=black,circle,inner sep=1pt,label={90+360/\n*\i:$v_{\i}$}] at (v\i) {};
  \draw (v0) -- (v\i);
}
\draw (v1) -- (v2)--(v3)--(v4)--(v5)--(v6)--(v7)--cycle;
\end{tikzpicture}
\caption{The graph $W_8$}\label{fig:W8}
\end{figure}

For $n\geq 3$, there is no adapted complex structure on $W_{2n}$. Indeed, assume by contradiction that $W_{2n}$ admits an adapted complex structure $J$, and let $\G_b=(p_1,p_2,p_3)$ be the associated basic graph. It follows from Lemma \ref{lem:misc}(i) that $p_2=0$. Hence, we have $2p_1+4p_3=|V|=2n$, that is, $p_1+2p_3=n$. Taking into account Corollary \ref{cor:odd}, we see that $Jv_0$ is another vertex of $W_{2n}$, and we may assume $Jv_0=\pm v_1$. In particular, the edge $e_{0,1}$ is a distinguished edge of $W_{2n}$; therefore, $p_3\geq 1$ and there is at least one more distinguished edge in $W_{2n}$. This other distinguished edge is different from $e_{1,2}$, $e_{1,2n-1}$ or $e_{0,i}$ with $i\neq 1$, since they are adjacent to the distinguished edge $e_{0,1}$. Suppose then that $e_{j,j+1}$ is another distinguished edge with $Je_{0,1}=\pm e_{j,j+1}$, with $2\leq j\leq 2n-2$ (these edges form an $\A_3$ subgraph). Then $(v_0;v_j,v_{j+1})$ is a complex wedge in $W_{2n}$. The edge $e_{j-1,j}$ is not distinguished (because it is adjacent to the distinguished edge $e_{j,j+1}$), and therefore it is in a complex wedge with center $v_{j-1}$ (it cannot have center $v_j$ since the edge $e_{0,j}$ is already part of a complex wedge and $\deg(v_j)=3$). Then, the other edge in this complex wedge has to be $e_{j-1,j+1}$. Since $n\geq 3$, we would have $\deg(v_{j+1})\geq 4$, which is absurd.  Hence, there is no adapted complex structure on $W_{2n}$.

On the other hand, it is easily verified that $W_4$ does admit an adapted complex structure (it is isomorphic to the complete graph $K_4$, see Proposition \ref{prop:complete} below).
\end{example}

\medskip

\subsection{Nilpotency of adapted complex structures}
Let $\g$ be a nilpotent Lie algebra equipped with a complex structure $J$. In \cite{CFGU} a natural ascending series related to $J$ was introduced. We recall this definition.

\begin{definition}
The $J$-compatible ascending series $\{\a_k \mid k \ge 0\}$ of $\g$ is defined inductively by
\[
 \a_0 = 0,\  \a_k = \{x\in \g \mid [x,\g] \subset \a_{k-1} \ \  \text{and} \ \ [Jx,\g] \subset
 \a_{k-1}\}, \ k\ge 1.  
\]
The complex structure $J$ is said to be \emph{nilpotent} if there exists $k \in \N$ such that $\a_k=\g$. The smallest such $k$ is called the \emph{nilpotency step of $J$}. 
\end{definition}

\par
If $\g$ is $2$-step nilpotent, then any complex structure $J$ is nilpotent \cite[Proposition 3.3]{Rol}, and the nilpotency step of $J$ is either $2$ or $3$ \cite[Theorem 1.3]{GZZ}. More precisely, this nilpotency step can be easily characterized according to the next result from \cite{B}.

\begin{lemma}[{\cite[Lemma 3.3]{B}}] \label{lem:nilpotencystep}
Let $\g$ be a $2$-step nilpotent Lie algebra endowed with a complex structure $J$. If $\g':=[\g,\g]$ denotes the commutator ideal of $\g$ (which is contained in the center $\z$ of $\g$), then
\begin{enumerate} 
    \item [$\ri$] $J$ is $2$-step nilpotent if and only if $J \g' \subset \z$,
    \item [$\rii$] $J$ is $3$-step nilpotent if and only if there exists $x \in \g'$ such that $Jx \notin \z$. 
\end{enumerate}
\end{lemma}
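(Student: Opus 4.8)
The plan is to compute the first terms $\a_1$ and $\a_2$ of the $J$-compatible ascending series explicitly, using that $\g$ is $2$-step nilpotent, and to read off both statements from that computation. First I would note that $[x,\g]=0$ exactly when $x\in\z$, so
$\a_1=\{x\in\g\mid x\in\z,\ Jx\in\z\}=\z\cap J^{-1}\z=\z\cap J\z$,
where the last equality uses $J^{-1}=-J$ together with the fact that $\z$ is a linear subspace. Thus $\a_1$ is the largest $J$-invariant subspace of $\g$ contained in $\z$; in particular it is $J$-invariant.

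Next I would simplify the condition defining $\a_2=\{x\in\g\mid [x,\g]\subseteq\a_1,\ [Jx,\g]\subseteq\a_1\}$. Since $\g$ is $2$-step nilpotent, for every $x$ the brackets $[x,\g]$ and $[Jx,\g]$ already lie in $\g'\subseteq\z$, so the only extra content of ``$[x,\g]\subseteq\a_1$'' is that $[x,\g]\subseteq J\z$; applying $J^{-1}=-J$, this is equivalent to $J[x,\g]\subseteq\z$. Hence $x\in\a_2$ if and only if $J[x,\g]\subseteq\z$ and $J[Jx,\g]\subseteq\z$. Part $\ri$ now follows directly in both directions. If $J\g'\subseteq\z$, then for every $x\in\g$ we have $J[x,\g]\subseteq J\g'\subseteq\z$ and $J[Jx,\g]\subseteq J\g'\subseteq\z$, so $\a_2=\g$, i.e. $J$ is $2$-step nilpotent. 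Conversely, if $\a_2=\g$, then $[x,\g]\subseteq\a_1\subseteq J\z$ for every $x$, hence $\g'=[\g,\g]\subseteq J\z$ (a subspace), and applying $J$ yields $J\g'\subseteq J(J\z)=-\z=\z$.

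For part $\rii$ I would use the known fact that the nilpotency step of a complex structure on a $2$-step nilpotent Lie algebra is always $2$ or $3$ (\cite[Theorem 1.3]{GZZ}; alternatively one can recover $\a_3=\g$ here by a short computation: for $z\in\g'$ one has $[z,\cdot]=0$, so the Nijenhuis identity $N_J(Jz,y)=0$ collapses to $[Jz,y]+J[Jz,Jy]=0$, whence $J[Jz,y]=[Jz,Jy]\in\g'\subseteq\z$; this gives $\g'\subseteq\a_2$ and therefore $\a_3=\g$). Granting that the step is at most $3$, the structure $J$ is $3$-step nilpotent precisely when it fails to be $2$-step nilpotent, and by part $\ri$ this happens exactly when $J\g'\not\subseteq\z$, that is, when there exists $x\in\g'$ with $Jx\notin\z$.

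All the computations involved are short, so there is no substantial obstacle; the only points requiring care are the bookkeeping with $J^{-1}=-J$ when rewriting ``$[\,\cdot\,,\g]\subseteq\a_1$'' as ``$J[\,\cdot\,,\g]\subseteq\z$'', and, if one wants a self-contained proof of part $\rii$ rather than quoting \cite{GZZ}, the single use of the integrability condition needed to establish $\g'\subseteq\a_2$.
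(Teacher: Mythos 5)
Your argument is correct, but note that the paper contains no proof of this lemma to compare against: it is imported with attribution from \cite[Lemma 3.3]{B}, so the only question is whether your blind proof stands on its own, and it does. Your computation of the series — $\a_1=\z\cap J\z$, and $x\in\a_2$ if and only if $J[x,\g]\subseteq\z$ and $J[Jx,\g]\subseteq\z$ (using that $[x,\g]\subseteq\g'\subseteq\z$ automatically) — is the natural direct verification, and part $\ri$ follows correctly in both directions. For part $\rii$, your optional self-contained step is also right: for $z\in\g'$ central, $N_J(Jz,y)=0$ reduces to $[Jz,y]+J[Jz,Jy]=0$, hence $J[Jz,y]=[Jz,Jy]\in\g'\subseteq\z$, which gives $[Jz,\g]\subseteq\z\cap J\z=\a_1$ and so $\g'\subseteq\a_2$ and $\a_3=\g$; this removes the need to quote \cite[Theorem 1.3]{GZZ}, which is a small bonus over simply citing the step-$\leq 3$ result. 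The one point worth flagging is the implicit convention that ``$2$-step nilpotent'' excludes the abelian case: if $\g$ were abelian, then $\a_1=\g$ and $J$ would be $1$-step nilpotent, so your inference ``$\a_2=\g$, i.e.\ $J$ is $2$-step nilpotent'' and the dichotomy ``$3$-step iff not $2$-step'' both tacitly use $\z\neq\g$. With that standard reading (which is also how the source \cite{B} and the present paper use the term), the proof is complete.
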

Note that any abelian complex structure $J$ on a 2-step nilpotent Lie algebra is 2-step nilpotent (since the center is $J$-invariant).

We now consider our case of interest, that is, an adapted complex structure $J$ on a 2-step nilpotent Lie algebra associated with a graph $\G$. As a consequence of Lemma \ref{lem:nilpotencystep}, we can characterize the step of nilpotency of any adapted complex structure, in terms of the corresponding basic subgraph $\G_b$ from Theorem \ref{thm: existe basico}.

\begin{proposition}\label{prop:3-step}
Let $J$ be an adapted complex structure on the graph $\G$, and let $\G_b$ be the associated basic subgraph, with $\G_b=(p_1,p_2,p_3)$. Then the complex structure $J$ on $\n_\G$ is 3-step nilpotent if and only 
\begin{enumerate}
    \item[$\ri$] $p_2\geq 1$, and
    \item[$\rii$] the isolated vertex of one of the copies of $\A_2$ is not isolated when considered as a vertex of $\G$.
\end{enumerate}
\end{proposition}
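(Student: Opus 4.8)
The plan is to translate the conditions (i) and (ii) into the algebraic criterion of Lemma \ref{lem:nilpotencystep}, namely that $J$ is $3$-step nilpotent if and only if there is some $x$ in the commutator ideal $\n_\G' = W_1$ (spanned by the edges $E$) with $Jx \notin \z$, where $\z$ is the center of $\n_\G$. Recall that $\z$ is spanned by the edges together with the isolated vertices of $\G$. So the whole statement reduces to: \emph{there exists an edge $a \in E$ with $Ja$ a non-isolated vertex of $\G$, up to sign}. The task is therefore to show that such an edge exists precisely when conditions (i) and (ii) hold.

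First I would prove the ``if'' direction. Assume $p_2 \geq 1$ and that in one copy of $\A_2$ inside $\G_b$, with non-isolated vertices $v, Jv$ joined by the distinguished edge $a = \seg{v\,Jv}$ and isolated (in $\G_b$) vertex $w$ with $Jw = a$, the vertex $w$ is \emph{not} isolated in $\G$. Then $Ja = -w$ (since $J^2 = -\operatorname{Id}$ and $Jw = a$), and $w$ is a non-isolated vertex of $\G$, hence not in the center $\z$ of $\n_\G$. Thus $a$ is an edge with $Ja \notin \z$, and Lemma \ref{lem:nilpotencystep}(ii) gives that $J$ is $3$-step nilpotent.

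For the ``only if'' direction, suppose $J$ is $3$-step nilpotent, so by Lemma \ref{lem:nilpotencystep} there is an edge $a \in E$ with $\pm Ja$ a non-isolated vertex $w$ of $\G$. The key point is to control which edges can be sent to vertices by $J$. If $a$ were non-distinguished, Proposition \ref{prop: endpoint} says $Ja$ is, up to sign, another edge of $\G$ — contradiction. So $a$ must be distinguished; say $a = \seg{x\,Jx}$. Now $a$ belongs to the basic subgraph $\G_b$, and in $\G_b$ the edge $a$ sits inside a copy of $\A_2$ or of $\A_3$. If $a$ lay in a copy of $\A_3$, then by the description of the adapted complex structure on $\A_3$ (Example \ref{ex: first examples}(iii)), $J$ maps $a$ to the other edge of that $\A_3$ — but that behavior is preserved in $\G$ since the basic subgraph is $J$-invariant and $J|_{\G_b}$ is the restriction of $J$ — so $Ja$ would be an edge, not a vertex, a contradiction. (Here one uses Theorem \ref{thm: existe basico}: $\G_b$ is $J$-invariant, so $Ja \in \n_{\G_b}$ is determined by the abelian structure on the basic pieces; and in $\A_1$ there are no edges.) Hence $a$ lies in a copy of $\A_2$, which forces $p_2 \geq 1$, giving (i); and $Ja = -w$ where $w$ is the third vertex of that $\A_2$ (isolated in $\G_b$). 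Since we assumed $w$ is non-isolated in $\G$, condition (ii) holds for that copy of $\A_2$.

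\textbf{Main obstacle.} The subtle step is the ``only if'' argument that an edge mapped to a vertex must live in an $\A_2$ block rather than an $\A_3$ block — this requires carefully invoking the $J$-invariance of $\G_b$ (Theorem \ref{thm: existe basico}) together with the explicit form of $J$ on the basic pieces (Example \ref{ex: first examples}), to conclude that $J$ restricted to the edge $a$ is computed entirely inside its basic block and hence lands on an edge when that block is $\A_3$. One must be a little careful that $J$ need not literally be the ``standard'' adapted complex structure on each $\A_3$ block of $\G_b$, but it is abelian on $\n_{\G_b}$ and the commutator ideal structure still forces $Ja$ to be $\pm$ the other edge of the $\A_3$ (using property \eqref{eq:property-graph} and that distinguished edges are non-adjacent): if $a = \seg{x\,Jx}$ is an edge in $\G_b$ then $Ja = J[x,Jx] = [Jx, J^2x] = -[Jx,x] = [x,Jx] = a$ would follow from abelianness if we naively applied \eqref{eq: abelian} — but of course that computes $Ja$ wrongly; rather one notes $a \in \n_{\G_b}'$, and since $J|_{\G_b}$ is $2$-step nilpotent (abelian), $J a \in \z(\n_{\G_b})$, so $Ja$ is $\pm$ an edge or $\pm$ an isolated vertex \emph{of $\G_b$}; edges of $\G_b$ stay edges of $\G$, and an isolated vertex of $\G_b$ inside an $\A_3$ copy is impossible since $\A_3$ has no isolated vertices. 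This pins down that $a$ is in an $\A_2$, completing the argument.
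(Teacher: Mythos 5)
Your proof is correct and takes essentially the same route as the paper: both directions reduce to Lemma \ref{lem:nilpotencystep}, your ``if'' argument is identical, and your ``only if'' argument likewise shows that an edge $a$ with $Ja$ a non-central element must be distinguished and then locates it in an $\A_2$ copy of $\G_b$. The only cosmetic differences are that you deduce distinguishedness of $a$ from Proposition \ref{prop: endpoint} rather than from Lemma \ref{lem:central}, and you rule out the $\A_3$ blocks via the $J$-invariant block decomposition of $\G_b$, whereas the paper simply observes that the $J$-invariant subgraph formed by $a$, its endpoints and the vertex $\pm Ja$ is the required copy of $\A_2$; both are equally valid.
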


\begin{proof}
Assume first that $J$ is 3-step nilpotent. According to Lemma \ref{lem:nilpotencystep} there exists an edge $a$ such that $Ja$ is not in the center of $\n_\G$, that is, $v:=Ja$ is, up to sign, a non-isolated vertex of $\G$. It follows from Lemma \ref{lem:central}  that $a$ is a distinguished edge of $\G$, and therefore it is also an edge in $\G_b$. Then the $J$-invariant subgraph consisting of the edge $a$, the endpoints of $a$ and the vertex $v$ is a graph isomorphic to $\A_2$. Thus, both $\ri$ and $\rii$ hold.

The converse follows readily, since the image by $J$ of the distinguished edge $a$ in the given copy of $\A_2$ is, up to sign, a non-isolated vertex of $\G$. Hence, $a$ is in the commutator ideal of $\n_\G$ but $Ja$ is not in the center of $\n_\G$, so $J$ is $3$-step nilpotent.
\end{proof}

\begin{corollary}
    With the same notation as in Proposition \ref{prop:3-step}, if $p_2\geq 1$ and $\G$ is connected then the adapted complex structure $J$ on $\G$ is $3$-step nilpotent.
\end{corollary}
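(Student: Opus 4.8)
The plan is to deduce this immediately from Proposition \ref{prop:3-step} by verifying that its condition $\rii$ is automatically fulfilled once $\G$ is connected. Since $p_2\geq 1$, the associated basic subgraph $\G_b=(p_1,p_2,p_3)$ contains at least one copy of $\A_2$; such a copy has three vertices, one of which is isolated \emph{within} $\G_b$. Because $\G_b$ is a spanning subgraph of $\G$, it follows in particular that $|V|\geq 3$, so $\G$ is a connected graph with at least two vertices. I would then invoke the elementary observation that a connected graph on at least two vertices has no isolated vertices: every vertex has a neighbour.

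Applying this to the vertex $v$ that is the isolated vertex of the chosen copy of $\A_2\subset \G_b$, we conclude that although $\deg_{\G_b}(v)=0$, we have $\deg_\G(v)\geq 1$; that is, $v$ is not isolated when regarded as a vertex of $\G$. This is precisely condition $\rii$ of Proposition \ref{prop:3-step}, and condition $\ri$ there is exactly the hypothesis $p_2\geq 1$. Hence Proposition \ref{prop:3-step} applies and gives that the adapted complex structure $J$ on $\n_\G$ is $3$-step nilpotent. There is no genuine obstacle in the argument; the only subtlety is to notice that the degenerate situation $|V|\leq 1$ cannot arise, which is guaranteed by $p_2\geq 1$.
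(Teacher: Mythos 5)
Your argument is correct and is exactly the intended deduction: the paper states this corollary without proof as an immediate consequence of Proposition \ref{prop:3-step}, and your verification that connectedness (together with $p_2\geq 1$ forcing $|V|\geq 3$) rules out isolated vertices, so that condition $\rii$ holds automatically, is precisely the missing step. Nothing to add.
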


\begin{example}
Consider the graph $\A_2$ from Example \ref{ex: first examples}, and expand it by adding the edges $e_{1,3}:=\overline{v_1v_3}$ and $e_{2,3}:=\overline{v_2v_3}$. This expansion is clearly the $3$-cycle graph $C_3$. Extending the original adapted complex structure $J$ on $\A_2$ by setting $Je_{1,3}=e_{2,3}$ we obtain an adapted complex structure on $C_3$. It follows from the previous corollary that this complex structure on $\n_{C_3}$ is $3$-step nilpotent.
\end{example}

\subsection{Complex structures on complete graphs}
The free 2-step nilpotent Lie algebra $\mathfrak{f}_m$ on $m$ generators is defined as $\mathfrak{f}_m=\R^m\oplus \alt^2 \R^m$, where the Lie bracket is defined as follows: $\alt^2 \R^m$ is the center of $\mathfrak{f}_m$ and for $x,y\in \R^m$, 
\[ [x,y]=x\wedge y \in \alt^2 \R^m. \]
The existence of complex structures on free 2-step nilpotent Lie algebras was settled in \cite[Proposition 4.9]{B}; indeed, it was proved that:
\begin{itemize}
    \item[$\diamond$] $\mathfrak{f}_m$ admits a complex structure when $m\equiv 0$ or $3$ (mod 4), 
    \item[$\diamond$] $\mathfrak{f}_m\times \R$ admits a complex structure when $m\equiv 1$ or $2$ (mod 4).
\end{itemize} 
We will obtain another proof of this result, exploiting the fact that the Lie algebra $\mathfrak{f}_m$ is isomorphic to the Lie algebra $\n_{K_m}$ associated with the complete graph $K_m$. 

\begin{proposition}\label{prop:complete}
Let $K_m$ denote the complete graph with $m$ vertices. Then, for any $n\in\N$, the graphs $K_{4n}$, $K_{4n+3}$, $(K_{4n+1})_*$ and $(K_{4n+2})_*$ admit adapted complex structures.
\end{proposition}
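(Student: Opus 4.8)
The plan is to realize each complete graph $K_m$ (or $(K_m)_*$) as an expansion of a suitable basic graph, and then invoke Theorem \ref{thm: expanded graph}. The key observation is that $K_m$ has an edge between \emph{every} pair of distinct vertices, so once we fix a basic spanning subgraph $\G_b$ on the correct number of vertices, \emph{all} remaining edges of $K_m$ are forced, and we must check that they can be organized into complex wedges compatible with the abelian complex structure on $\G_b$. Concretely, given the pairing $v \leftrightarrow Jv$ on the vertices of $\G_b$, an edge $\seg{uw}$ of $K_m$ not already in $\G_b$ automatically comes with its partner $\seg{u(Jw)}$ (also an edge of $K_m$, since $K_m$ is complete), and the pair $(u; w, Jw)$ is a wedge that we declare to be a complex wedge via \eqref{eq: Jtilde}. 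So the real content is purely combinatorial: we must exhibit, for each residue of $m$ modulo $4$, a basic graph on $m$ vertices (resp.\ $m+1$ vertices) whose vertex-pairing is such that every non-basic edge of the complete graph fits into the expansion scheme.

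First I would handle the vertex count. A basic graph $\B=(p_1,p_2,p_3)$ has $2p_1+3p_2+4p_3$ vertices, so:
\begin{itemize}
\renewcommand{\labelitemi}{$\diamond$}
    \item for $K_{4n}$ take $\B=(0,0,n)$, which has $4n$ vertices;
    \item for $K_{4n+3}$ take $\B=(0,1,n)$, which has $4n+3$ vertices;
    \item for $(K_{4n+1})_*$ note the graph has $4n+2$ vertices, so take $\B=(2n+1,0,0)$ (an empty graph on $4n+2$ vertices), or equivalently $\B=(1,0,n)$ — either works since the only edges of $(K_{4n+1})_*$ lie among the first $4n+1$ vertices and none touches the added isolated vertex, so the pairing just needs the isolated vertex matched to some other vertex;
    \item for $(K_{4n+2})_*$ the graph has $4n+3$ vertices, so take $\B=(0,1,n)$.
\end{itemize}
In each case one fixes an involution $v\mapsto Jv$ on the vertex set (pairing up vertices, and in the $\A_2$-cases sending the non-isolated vertices to the isolated one in the appropriate way, so that the distinguished edge maps to a vertex).

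Next I would verify that $K_m$ (or $(K_m)_*$) really is the expansion of this $\B$. Since the complete graph contains every possible edge, it suffices to check that the non-basic edges partition into valid pairs $\{\seg{uw},\seg{u(Jw)}\}$ with $u\notin\{w,Jw\}$: given any such edge $\seg{uw}$ not in $\G_b$, its mate $\seg{u(Jw)}$ is again present (completeness) and distinct from $\seg{uw}$ (since $Jw\ne w$), and $u\ne Jw$ because otherwise $\seg{uw}=\seg{(Jw)w}$ would be the distinguished edge already in $\G_b$. Running over all non-basic edges produces exactly the list of complex wedges adjoined in the expansion. Theorem \ref{thm: expanded graph} then guarantees that the extension $\widetilde J$ of the abelian adapted complex structure $J$ on $\B$, defined by \eqref{eq: Jtilde}, is integrable, hence an adapted complex structure on $K_m$ (resp.\ $(K_m)_*$).

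The main obstacle is purely bookkeeping: one must confirm that the assignment of signs in \eqref{eq: Jtilde} is globally consistent when a single vertex $u$ serves as the center of many complex wedges at once (which certainly happens in a complete graph, where each vertex has degree $m-1$), and that no edge gets assigned to two different wedges. The first point is automatic because \eqref{eq: Jtilde} defines $\widetilde J$ edge-by-edge and Theorem \ref{thm: expanded graph} already covers the case of arbitrarily many adjoined wedges sharing vertices; the second point follows from property \eqref{eq:property-graph} of graph Lie algebras together with the fact that in $K_m$ each unordered pair appears exactly once as an edge. So once the four vertex-count choices above are recorded, the proof reduces to a one-line appeal to Theorem \ref{thm: expanded graph}, and I would present it that way, perhaps illustrating the pairing explicitly for small $n$ (e.g.\ $K_4$, $K_7$) to make the construction transparent.
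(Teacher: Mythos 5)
Your overall strategy is exactly the paper's: realize each complete graph as an expansion of a basic graph and invoke Theorem \ref{thm: expanded graph}, and your choices for $K_{4n}$, $K_{4n+3}$ and $(K_{4n+2})_*$ coincide with the paper's. However, your treatment of $(K_{4n+1})_*$ contains a genuine error: the choice $\B=(2n+1,0,0)$ does \emph{not} work, and your justification (``the pairing just needs the isolated vertex matched to some other vertex'') misses the real constraint. The expansion procedure only ever adds edges $\seg{uv}$, $\seg{u(Jv)}$ with $u\notin\{v,Jv\}$, so an expansion never creates an edge joining a vertex to its $J$-image; equivalently, the basic subgraph must contain \emph{all} distinguished edges. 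If the basic graph is the empty graph $(2n+1,0,0)$, every vertex is $J$-paired with another vertex, and at most one pair involves the added isolated vertex; the remaining $2n\geq 2$ pairs lie inside $K_{4n+1}$ and are therefore adjacent there, so those $2n$ edges are distinguished and cannot be produced by the expansion (nor are they in the empty basic graph). Hence the expansion of $(2n+1,0,0)$ can never equal $(K_{4n+1})_*$; the choice $\B=(1,0,n)$ (the paper's, which you also list) is essentially forced, with the isolated vertex $v_0$ paired to a vertex $v_1$ of $K_{4n+1}$ and the $n$ copies of $\A_3$ accounting for the $2n$ distinguished edges inside $K_{4n+1}$.

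Two smaller points in your general verification also need repair. First, your uniform recipe ``the mate of $\seg{uw}$ is $\seg{u(Jw)}$'' fails for edges incident to $v_1$ in the $(K_{4n+1})_*$ case, since $Jv_1=v_0$ is isolated and $\seg{uv_0}$ is not an edge; those edges must be organized as wedges \emph{centered} at $v_1$, i.e.\ $(v_1;x,Jx)$. The same adjustment is needed for edges incident to the isolated $\A_2$-vertex $v_0$ in the $K_{4n+3}$ and $(K_{4n+2})_*$ cases, where $Jv_0$ is an edge, so all such edges must be centered at $v_0$. Second, your claim that no edge gets assigned to two wedges ``follows from \eqref{eq:property-graph}'' is not the right argument: a non-distinguished edge $\seg{uw}$ sits in two candidate wedges, $(u;w,Jw)$ and $(w;u,Ju)$, and \eqref{eq: Jtilde} gives conflicting values of $\widetilde J$ on it for the two choices (already visible in $K_4$, where $e_{1,3}$ lies in both $(v_1;v_3,v_4)$ and $(v_3;v_1,v_2)$). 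One must actually fix a partition of the non-distinguished edges into wedge pairs, e.g.\ by centering each edge at the endpoint belonging to the lower-indexed $J$-pair (and at $v_1$, resp.\ $v_0$, in the exceptional situations above); once such a partition is fixed, Theorem \ref{thm: expanded graph} applies and the argument closes as in the paper, whose own proof is equally brief on this bookkeeping.
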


\begin{proof}
Let $n\in\N$ be fixed.\par
For $K_{4n}$, consider the basic graph $\B=(0,0,n)$. Then it is easily seen that expanding $\B$ by adding \textit{all} possible complex wedges, the expanded graph $\widetilde{\B}$ is precisely $K_{4n}$, so it admits an adapted complex structure, due to Theorem \ref{thm: expanded graph}.\par 
In a similar way, for $K_{4n+3}$ we begin with the basic graph $\B=(0,1,n)$. Then, again expanding $\B$ by adding \textit{all} possible complex wedges, the expanded graph $\widetilde{\B}$ is precisely $K_{4n+3}$, so it admits an adapted complex structure.\par 
For $K_{4n+1}$, we consider the basic graph $\B=(1,0,n)$. If the vertices of $\A_1$ are $v_0$ and $v_1$, we expand $\B$ by adding all possible pairs of edges between $v_1$ and the vertices in $n\A_3$, and then all possible pairs of edges between the vertices in $n\A_3$. Then the expanded graph $\widetilde{\B}$ is $(K_{4n+1})_*=K_{4n+1}\cup \{v_0\}$, so it admits an adapted complex structure. \par
Finally, for $K_{4n+2}$ we begin with the basic graph $\B=(0,1,n)$. In $\A_2$, let $v_0$ denote the isolated vertex and $v_1$, $v_2$ the non-isolated vertices. As before, expand $\B$ by adding all possible pairs of edges between $v_1$, $v_2$ and the vertices in $n\A_3$, and then all possible pairs of edges between the vertices in $n\A_3$. Then the expanded graph $\widetilde{\B}$ is $(K_{4n+2})_*=K_{4n+2}\cup \{v_0\}$, so it admits an adapted complex structure.
\end{proof}


Recall that the chromatic number of a graph $\G$ is the smallest number of colors needed to paint the vertices of $\G$ so that no two adjacent vertices share the same color. For instance, the chromatic number of a bipartite graph is 2. Since the chromatic number of $K_m$ is $m$, the following result is an immediate consequence of Proposition \ref{prop:complete}.

\begin{corollary}
The chromatic number of a graph admitting an adapted complex structure can be arbitrarily large.
\end{corollary}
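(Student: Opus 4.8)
The plan is to deduce this immediately from Proposition \ref{prop:complete} together with the elementary fact that $\chi(K_m)=m$. First I would recall that the chromatic number is monotone under taking subgraphs, and that adjoining an isolated vertex to a graph does not change its chromatic number (the new vertex may reuse any color already present, provided the graph is nonempty). Hence $\chi\bigl((K_m)_\ast\bigr)=\chi(K_m)=m$ for every $m\geq 1$.

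Next I would invoke Proposition \ref{prop:complete}: for every $n\in\N$ the graph $K_{4n}$ admits an adapted complex structure, and $\chi(K_{4n})=4n$. Thus the family $\{K_{4n}\}_{n\in\N}$ consists of graphs admitting adapted complex structures whose chromatic numbers $4n$ are unbounded. (Alternatively, one could cite all four families $K_{4n}$, $K_{4n+3}$, $(K_{4n+1})_\ast$, $(K_{4n+2})_\ast$ from the proposition to note that in fact every sufficiently large value of the chromatic number is realized, but a single family already yields the statement.) This completes the argument.

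There is essentially no obstacle here: the only points requiring a word of justification are the value $\chi(K_m)=m$, which is standard, and the stability of the chromatic number under adjoining an isolated vertex, which is trivial; the substantive content lies entirely in the already-established Proposition \ref{prop:complete}.
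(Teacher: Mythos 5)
Your argument is correct and coincides with the paper's: the corollary is stated as an immediate consequence of Proposition \ref{prop:complete} together with the fact that $\chi(K_m)=m$, which is exactly your reasoning (the remark about isolated vertices not affecting the chromatic number is a harmless extra observation). Nothing further is needed.
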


\subsection{Girth}

The girth of a graph is the length of a shortest cycle contained in the graph. If the graph is acyclic, that is, a forest, then its girth is defined as infinite. 

With respect to the girth of a graph that admits an adapted complex structure, we have the following result.

\begin{proposition}\label{prop:girth}
Let $\G$ be a graph that admits an adapted complex structure $J$, and let $\G_b=(p_1,p_2,p_3)$ be the corresponding basic subgraph. Then:
\begin{enumerate}
    \item[$\ri$] if $p_1=0$ and $\G$ is not basic then the girth of $\G$ is 3;
    \item[$\rii$] for each $n\in \N$, $n\geq 3$, we can choose $\G$ in such a way that it is connected, $p_2=p_3=0$ and its girth is $n$. 
\end{enumerate}
\end{proposition}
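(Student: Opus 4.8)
I would prove the two parts separately, both via Theorem~\ref{thm:characterization}, which realizes $\G$ as an expansion of its basic subgraph $\G_b=(p_1,p_2,p_3)$, together with the explicit adapted complex structures on $\A_1,\A_2,\A_3$ from Example~\ref{ex: first examples}.

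\emph{Part $\ri$.} The key observation is that, when $p_1=0$, every $J$-pair of vertices of $\G_b$ is already joined by an edge of $\G_b$. Since $\G_b$ is a disjoint union of copies of $\A_1,\A_2,\A_3$ each carrying its standard structure (Theorem~\ref{thm: existe basico}), and this structure is block-diagonal with respect to the copies, a pair $\{v,Jv\}$ with $v,Jv$ both vertices lies inside a single copy of $\A_2$ or $\A_3$; and in a copy of $\A_2$ with vertices $\{a,b,c\}$ and edge $\seg{ab}$ one has $Ja=b$ while $Jc$ is an edge, so the only $J$-pair of vertices is $\{a,b\}=\seg{ab}$, while in a copy of $\A_3$ with vertices $\{a,b,c,d\}$ and edges $\seg{ab},\seg{cd}$ the $J$-pairs of vertices are exactly $\{a,b\}$ and $\{c,d\}$. (This is where $p_1=0$ is essential, since in $\A_1$ the pair $\{v,Jv\}$ bears no edge.) Now, $\G$ not basic means, by Proposition~\ref{prop: basic}, that $\G$ has a non-distinguished edge, and by Proposition~\ref{prop: endpoint} this edge sits in a complex wedge $(u;v,w)$ with $w=Jv$; hence $\seg{uv},\seg{uw}\in E(\G)$, and by the observation $\seg{vw}=\seg{v\,Jv}\in E(\G_b)\subseteq E(\G)$, so $u,v,w$ span a triangle. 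Therefore the girth of $\G$ is at most $3$, and since $\G$ is a simple graph it is exactly $3$.

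\emph{Part $\rii$.} Here I would exhibit one uniform family; note the $n$-cycle itself cannot be used, since $C_n$ has no adapted complex structure for $n\geq 5$ by Corollary~\ref{cor:cycle}, so the cycle must be enlarged slightly. Fix $n\geq 3$, take vertices $v_0,\dots,v_{n-1},w_0,\dots,w_{n-1}$, let $\B=(n,0,0)$ with $Jv_i=w_i$, and expand $\B$ by the $n$ complex wedges $(v_i;v_{i+1},w_{i+1})$, $i=0,1,\dots,n-1$ (indices modulo $n$). By Theorem~\ref{thm: expanded graph} the resulting almost complex structure $\widetilde J$, given on the new edges by \eqref{eq: Jtilde}, is an adapted complex structure on the expansion $\widetilde\B$; and since $\B$ has no edges and each added edge (a leg of a complex wedge) is non-distinguished, the basic subgraph associated with $(\widetilde\B,\widetilde J)$ again has no edges, so $p_2=p_3=0$. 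The wedge $(v_i;v_{i+1},w_{i+1})$ contributes exactly the edges $\seg{v_iv_{i+1}}$ and $\seg{v_iw_{i+1}}$, so $\widetilde\B$ is the $n$-cycle $v_0v_1\cdots v_{n-1}v_0$ with a single pendant vertex $w_{i+1}$ attached at each $v_i$. This graph is connected, and since every pendant vertex has degree $1$ its only cycle is the displayed $n$-cycle; hence its girth equals $n$.

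\emph{What needs care.} I do not expect a genuine obstacle. In part $\rii$ one must check that the expansion is legitimate, i.e.\ that the $2n$ edges $\{\seg{v_iv_{i+1}}\}\cup\{\seg{v_iw_{i+1}}\}$ are pairwise distinct so that each vertex is reused only to create genuinely new edges — which is immediate — and that no cycle shorter than $n$ appears, which follows since the pendant vertices have degree $1$. In part $\ri$ the only nontrivial point is the short finite inspection showing that every $J$-pair of vertices of $\A_2$ and of $\A_3$ is an edge. The one real design choice is the family in part $\rii$: attaching the pendants is precisely what allows the cyclic pattern of wedges to assemble into a valid expansion of an empty basic graph.
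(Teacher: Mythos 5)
Your proposal is correct and follows essentially the same route as the paper: in (i) you use that $p_1=0$ forces the endpoints $v,Jv$ of any complex wedge to be joined by a distinguished edge, producing a triangle, and in (ii) you build exactly the paper's example — the expansion of $(n,0,0)$ by a cyclic pattern of complex wedges, giving an $n$-cycle with one pendant vertex at each cycle vertex. Your extra checks (the finite inspection of $J$-pairs in $\A_2,\A_3$ and the verification that the basic subgraph of the expansion is still $(n,0,0)$) are just slightly more explicit versions of steps the paper leaves implicit.
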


\begin{proof}
$\ri$ Since there are no copies of $\A_1$ in $\G_b$, for any complex wedge $(u;v,Jv)$ in $\G$ the endpoints $v$ and $Jv$ are connected by a distinguished edge. Thus, in $\G$ there is a 3-cycle with vertices $u,v, Jv$. Thus, the girth is $3$.

$\rii$ Given $n\geq 3$, consider the basic graph $\B=(n,0,0)$, where the vertices in the $i$-th copy of $\A_1$ are $v_i,v_{n+i}$ with $Jv_i=v_{n+i}$, $1\leq i\leq n$. We expand $\B$ by adding the following complex wedges: $(v_i;v_{i+1},v_{n+i+1})$ for $1\leq i \leq n-1$, and $(v_n; v_1,v_{n+1})$. Then $\G=\widetilde{\B}$ thus obtained carries an adapted complex structure, and there is exactly one $n$-cycle in $\G$, with vertices $v_1,v_2,\ldots,v_n$. Therefore, the girth of $\G$ is $n$. 
\end{proof}

\begin{example}
The graph below is obtained from the construction given in Proposition \ref{prop:girth}$\rii$ for $n=6$:
\begin{center}
\begin{tikzpicture}[scale=1]\footnotesize
\tikzset{every loop/.style={looseness=10}}
\node (1) at (0,1) [draw,circle,inner sep=1pt,fill=black,label=0:{$v_1$}] {};
\node (2) at (-1,0.5) [draw,circle,inner sep=1pt,fill=black,label=90:{$v_2$}] {};
\node (3) at (-1,-0.5) [draw,circle,inner sep=1pt,fill=black,label=180:{$v_3$}] {};
\node (4) at (0,-1) [draw,circle,inner sep=1pt,fill=black,label=180:{$v_4$}] {};
\node (5) at (1,-0.5) [draw,circle,inner sep=1pt,fill=black,label=270:{$v_5$}] {};
\node (6) at (1,0.5) [draw,circle,inner sep=1pt,fill=black,label=0:{$v_6$}] {};
\node (7) at (0,1.7) [draw,circle,inner sep=1pt,fill=black,label=90:{$v_8$}] {};
\node (8) at (-1.7,0.9) [draw,circle,inner sep=1pt,fill=black,label=135:{$v_9$}] {};
\node (9) at (-1.7,-0.9) [draw,circle,inner sep=1pt,fill=black,label=225:{$v_{10}$}] {};
\node (10) at (0,-1.7) [draw,circle,inner sep=1pt,fill=black,label=270:{$v_{11}$}] {};
\node (11) at (1.7,-0.9) [draw,circle,inner sep=1pt,fill=black,label=-15:{$v_{12}$}] {};
\node (12) at (1.7,0.9) [draw,circle,inner sep=1pt,fill=black,label=45:{$v_{7}$}] {};
\path
(1) edge [-,black] node[above=-13] {} (2)
(2) edge [-,black] node[above=-15] {} (3)
(3) edge [-,black] node[above=-1] {} (4)
(4) edge [-,black] node[above=-2] {} (5)
(5) edge [-,black] node[above=-2] {} (6)
(6) edge [-,black] node[above=-13] {} (1)
(7) edge [-,black] node[above=-2] {} (1)
(8) edge [-,black] node[above=-0] {} (2)
(9) edge [-,black] node[above=-2] {} (3)
(10) edge [-,black] node[above=-2] {} (4)
(11) edge [-,black] node[above=-15] {} (5)
(12) edge [-,black] node[above=0] {} (6);
\end{tikzpicture} 
\end{center}
Accordingly, it has a unique cycle, which has length 6. An adapted complex structure on this graph is given by:
\[ Jv_i=v_{6+i} \quad (1\leq i \leq 6), \quad  J e_{i,i+1}=e_{i,7+i} \quad (1\leq i\leq 5), \quad J e_{1,6}=-e_{6,7}. \]
\end{example}

\subsection{Perfect matchings}

Let us recall the following notion from graph theory. Given a graph $\G=\G(V,E)$, a \textit{perfect matching} in $\G$ is a subset $M$ of $E$, such that every vertex in $V$ is adjacent to exactly one edge in $M$. If $\G$ admits a perfect matching $M$ then $|V|$ is even and the number of edges in $M$ is $|V|/2$. 

In the setting of graphs with an adapted complex structure, we have the following result, which establishes the existence of a perfect matching in certain families:

\begin{proposition}\label{prop:matching}
    Let $\G$ be a graph \ with an adapted complex structure $J$, and let $\G_b$ denote the associated basic subgraph. If $\G_b=(0,0,n)$ for some $n\in \N$ then the set of distinguished edges is a perfect matching in $\G$.
\end{proposition}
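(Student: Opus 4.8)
The plan is to read off everything from the structural description of the basic subgraph $\G_b$ established in Theorem \ref{thm: existe basico}. Recall from the proof of that theorem that $\G_b$ is the spanning subgraph of $\G$ whose edge set consists \emph{exactly} of the distinguished edges of $\G$; in particular $V(\G_b)=V(\G)$. Hence proving that the set of distinguished edges is a perfect matching of $\G$ is the same as proving that the edge set of $\G_b$ is incident to every vertex of $\G$ exactly once.

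First I would recall the shape of a single copy of $\A_3$: it has four vertices, say $u_1,u_2,u_3,u_4$, and two edges $\seg{u_1u_2}$ and $\seg{u_3u_4}$ (with the adapted complex structure $Ju_1=u_2$, $Ju_3=u_4$, $Je_{1,2}=e_{3,4}$ from Example \ref{ex: first examples}). The point is simply that these two edges partition the four vertices into two pairs, so inside an $\A_3$ every vertex is incident with exactly one of its edges. Since $\G_b=(0,0,n)$ is the disjoint union of $n$ copies of $\A_3$, its $2n$ edges partition $V(\G_b)$ into $2n$ unordered pairs; and since $\G_b$ is spanning, $V(\G_b)=V(\G)$, so every vertex of $\G$ is an endpoint of exactly one edge of $\G_b$, i.e.\ of exactly one distinguished edge of $\G$. (Incidentally, the ``at most one'' part also follows from the Remark after the definition of adapted complex structure, which notes that two distinguished edges are never adjacent, but the block decomposition above already gives it.) Combining these observations, the set of distinguished edges meets every vertex of $\G$ exactly once, which is precisely the definition of a perfect matching.

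I do not expect a real obstacle here: all the content is contained in Theorem \ref{thm: existe basico}, and the only point deserving a sentence of care is that the decomposition $\G_b=(0,0,n)$ into copies of $\A_3$ need not be canonical when $\G_b$ admits several basic decompositions; but since $p_1=p_2=0$ forces $V(\G_b)$ to be a disjoint union of four-element blocks each carrying two edges, the argument goes through regardless of which $n$ blocks are chosen, and in particular the distinguished edges are well defined and cover $V(\G)$ exactly once.
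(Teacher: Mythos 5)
Your argument is correct and is essentially the paper's: the paper likewise deduces the result from the fact that $\G_b$ (whose edges are exactly the distinguished edges) is a spanning subgraph and that two distinguished edges are never incident, with the observation that when $p_1=p_2=0$ every vertex of $\G_b$ lies on a distinguished edge. Your write-up just spells out the block structure of the copies of $\A_3$ in a bit more detail.
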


\begin{proof}
    This is a consequence of the fact that $\G_b$ is a spanning subgraph of $\G$ and that two distinguished edges in $\G$ are never incident. 
\end{proof}

\begin{example}
Consider the basic graph given by $\B=(0,0,2)$, where one copy of $\A_3$ is given by the $v_1,v_2,v_3,v_4$ and edges $e_{1,2}, e_{3,4}$, and the other copy of $\A_3$ is given by the vertices $v_5,v_6,v_7,v_8$ and wedges $e_{5,8}, e_{6,7}$. We expand this graph by adding the complex edges $(v_1; v_3,v_4)$, $(v_2; v_3,v_4)$, $(v_3; v_6,v_7)$, $(v_4; v_6,v_7)$, $(v_4; v_5,v_8)$ and $(v_7; v_5,v_8)$ (see the figure below). The resulting graph $\G=\widetilde{\B}$ admits a complex structure that can be obtained by \eqref{eq: Jtilde}. Note that the distinguished edges are $e_{1,2}, e_{3,4}, e_{5,8}, e_{6,7}$, and they form a perfect matching in $\G$.
\begin{center}
 \begin{tikzpicture}\footnotesize
\tikzset{every loop/.style={looseness=30}}
\node (1) at (0,0) [draw,circle,inner sep=1pt,fill=black,label=180:{$v_1$}] {};
\node (2) at (1.5,0) [draw,circle,inner sep=1pt,fill=black,label=-0:{$v_2$}] {};
\node (3) at (0,1.5) [draw,circle,inner sep=1pt,fill=black,label=180:{$v_3$}] {};
\node (4) at (1.5,1.5) [draw,circle,inner sep=1pt,fill=black,label=315:{$v_4$}] {};
\node (5) at (3,1.5) [draw,circle,inner sep=1pt,fill=black,label=0:{$v_5$}] {};
\node (6) at (0,3) [draw,circle,inner sep=1pt,fill=black,label=180:{$v_6$}] {};
\node (7) at (1.5,3) [draw,circle,inner sep=1pt,fill=black,label=90:{$v_7$}] {};
\node (8) at (3,3) [draw,circle,inner sep=1pt,fill=black,label=0:{$v_8$}] {};
\path
(1) edge [-, line width=2pt, black] node[above=-13] {} (2)
(1) edge [-,black] node[above=-15] {} (3)
(1) edge [-,black] node[above=-1] {} (4)
(2) edge [-,black] node[above=-2] {} (3)
(2) edge [-,black] node[above=-2] {} (4)
(3) edge [-, line width=2pt, black] node[above=-13] {} (4)
(3) edge [-,black] node[above=-2] {} (6)
(3) edge [-,black] node[above=-0] {} (7)
(4) edge [-,black] node[above=-2] {} (5)
(4) edge [-,black] node[above=-2] {} (6)
(4) edge [-,black] node[above=-2] {} (7)
(4) edge [-,black] node[above=-15] {} (8)
(5) edge [-,black] node[above=0] {} (7)
(5) edge [-, line width=2pt, black] node[above=-2] {} (8)
(6) edge [-, line width=2pt, black] node[above=0] {} (7)
(6) edge [-,black] node[above=-15] {} (8);
\end{tikzpicture} 
\end{center}
\end{example}

\subsection{Blow-ups of graphs}
Here, we will show that given any non-empty graph we can construct another graph that admits adapted complex structures. We use the notion of blow-up of a graph, which we recall next.

Given a non-empty graph $\G=\G(V,E)$, with $V=\{v_1,\ldots,v_n\}$, consider an integer vector $\mathbf{k}=(k_1,\ldots,k_n)\in \N^n$. The $\mathbf{k}$-blow-up of $\G$, denoted by $\G^{(\mathbf{k})}$, is the graph obtained by replacing every vertex $v_i$ of $\G$ with $k_i$ different vertices $v_i^{(1)},\ldots, v_i^{(k_i)}$, where a vertex $v_i^{(r)}$, $1\leq r\leq k_i$, is adjacent to a vertex $v_j^{(l)}$, $1\leq l\leq k_j$, in the blow-up graph if and only if $i\neq j$ and $v_i$ is adjacent to $v_j$ in $\G$. 

For instance, consider the complete graph $K_2$ and $\mathbf{k}=(k_1,k_2)$. Then, $K_2^{(\mathbf{k})}$ is isomorphic to the complete bipartite graph $K_{k_1,k_2}$.

Regarding the existence of adapted complex structures on blow-up graphs, we have the following result.

\begin{proposition}\label{prop: blow}
Let $\G$ be any graph with vertices $v_1,\ldots, v_n$, and consider an integer vector $\mathbf{k}=(k_1,\ldots,k_n)\in \N^n$ such that $k_i$ is even for all $i$. Then, the $\mathbf{k}$-blow-up $\G^{(\mathbf{k})}$ admits an adapted complex structure such that the corresponding basic subgraph is the union of copies of $\A_1$.
\end{proposition}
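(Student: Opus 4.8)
The plan is to exhibit $\G^{(\mathbf{k})}$ as an expansion, in the sense of Section~\ref{sec:construction}, of an empty basic graph, and then to invoke Theorem~\ref{thm: expanded graph}. Write $k_i=2m_i$ for each $i$ and label the vertices of $\G^{(\mathbf{k})}$ as $v_i^{(r)}$ with $1\le i\le n$ and $1\le r\le 2m_i$. Let $\B$ be the empty graph on these $N:=\sum_i 2m_i$ vertices, equipped with the adapted (indeed abelian, by Proposition~\ref{prop: basic}) complex structure $J$ determined by $Jv_i^{(2r-1)}=v_i^{(2r)}$ for all $i$ and all $1\le r\le m_i$; thus $\B=(N/2,0,0)$, a union of $N/2$ copies of $\A_1$.

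Next I would specify the expansion. Fix a linear order $v_1,\dots,v_n$ on the vertices of $\G$. For every edge $\seg{v_iv_j}$ of $\G$ with $i<j$, every $s\in\{1,\dots,2m_i\}$ and every $r\in\{1,\dots,m_j\}$, add to $\B$ the pair of edges of the wedge $(v_i^{(s)};\,v_j^{(2r-1)},\,v_j^{(2r)})$. Since $Jv_j^{(2r-1)}=v_j^{(2r)}$ and $v_i^{(s)}\notin\{v_j^{(2r-1)},v_j^{(2r)}\}$ (because $i\neq j$), each such move is a legitimate step of the expansion procedure. The key bookkeeping observation is that an unordered pair $\{v_i^{(s)},v_j^{(t)}\}$ with $i<j$ and $v_i$ adjacent to $v_j$ in $\G$ is produced by exactly one triple $(i<j,\,s,\,r)$, namely the one with $r=\lceil t/2\rceil$, and that no two vertices of the same blow-up class $\{v_i^{(1)},\dots,v_i^{(2m_i)}\}$ are ever joined. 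Hence no edge is added twice, the construction is a bona fide expansion $\widetilde{\B}$ of $\B$, and its underlying graph has vertex set $V(\B)=V(\G^{(\mathbf{k})})$ and edge set precisely $\{\seg{v_i^{(s)}v_j^{(t)}}:\, i\neq j,\ v_i\sim v_j \text{ in }\G\}$; that is, $\widetilde{\B}=\G^{(\mathbf{k})}$.

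By Theorem~\ref{thm: expanded graph}, the extension $\widetilde{J}$ of $J$ given by \eqref{eq: Jtilde} is an adapted complex structure on $\widetilde{\B}=\G^{(\mathbf{k})}$. Finally, $\widetilde{J}$ acts on vertices exactly as $J$ does, so the only pairs of vertices interchanged (up to sign) by $\widetilde{J}$ are the $(v_i^{(2r-1)},v_i^{(2r)})$, which lie in a common blow-up class and hence are non-adjacent; thus $\G^{(\mathbf{k})}$ has no distinguished edges, and by Theorem~\ref{thm: existe basico} the basic subgraph associated with $(\G^{(\mathbf{k})},\widetilde{J})$ is the empty spanning subgraph $\B=(N/2,0,0)$, a union of copies of $\A_1$, as required.

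The only subtle point is the combinatorial bookkeeping of the second paragraph: one must check that the prescribed family of wedges generates each edge of $\G^{(\mathbf{k})}$ exactly once (so that the process is a genuine expansion, and so that $\widetilde{J}$ is well defined on the new edges) and introduces no edge inside a blow-up class; fixing the order $i<j$ on $\G$ and pairing the copies of the larger endpoint makes this transparent. Everything else is a direct application of Theorems~\ref{thm: existe basico} and~\ref{thm: expanded graph}.
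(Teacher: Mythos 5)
Your proposal is correct. It differs mildly but genuinely from the paper's argument in how the global structure is justified: the paper replaces each edge $\seg{v_iv_j}$ of $\G$ by the complete bipartite block $K_{k_i,k_j}$, invokes Proposition~\ref{prop: bipartite} (and Example~\ref{ex: A1}) on each block, and then asserts that ``repeating this for any edge'' yields an adapted complex structure on all of $\G^{(\mathbf{k})}$, leaving the compatibility and integrability of the blockwise structures on the whole blow-up implicit; you instead realize $\G^{(\mathbf{k})}$ in one stroke as an expansion of the empty basic graph $(N/2,0,0)$, with the pairing inside each blow-up class, and let Theorem~\ref{thm: expanded graph} deliver integrability globally, the only work being the (correct) bookkeeping that each blow-up edge lies in exactly one of your prescribed wedges and that no edge is created inside a class. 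The resulting complex structure restricted to each block is exactly the bipartite one of Proposition~\ref{prop: bipartite} (up to the choice of which side's copies are paired and the signs forced by \eqref{eq: Jtilde}), so the two proofs produce essentially the same structure; what your route buys is an explicit verification of the gluing step that the paper compresses into one sentence, at the cost of carrying the wedge-counting argument, while the paper's route is shorter because it leans on the already-proved bipartite case. Your identification of the associated basic subgraph via Theorem~\ref{thm: existe basico} (no distinguished edges since $\widetilde{J}$-paired vertices lie in a common, hence non-adjacent, blow-up class) matches the paper's conclusion.
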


\begin{proof}
If $\overline{v_iv_j}$ is an edge in $\G$, then, in $\G^{(\mathbf{k})}$, the vertices $v_i$, $v_j$ and the edge $\overline{v_iv_j}$ are replaced by the complete bipartite graph $K_{k_i,k_j}$. Since $k_i$ and $k_j$ are even, this complete bipartite graph admits an adapted complex structure, thanks to Proposition \ref{prop: bipartite}, such that the corresponding basic subgraph consists only of copies of $\A_1$ (see Example \ref{ex: A1}). Repeating this for any edge in $\G$, we obtain an adapted complex structure on $\G^{(\mathbf{k})}$ such that the associated basic subgraph is the union of copies of $\A_1$.
\end{proof}

\begin{example}
In the figure below, we see the complete graph $K_3$ on the left, and its $\mathbf{k}$-blow-up on the right, where $\mathbf{k}=(2,2,2)$. According to Proposition \ref{prop: blow}, this blow-up graph admits an adapted complex structure $J$ such that the associated basic subgraph is $\G_b=(3,0,0)$. The vertices of each $\A_1$ subgraph are $v_i^{(1)}$ and $v_i^{(2)}$ for $1\leq i\leq 3$.
\begin{center}
\begin{tikzpicture}[scale=1]\footnotesize
\tikzset{every loop/.style={looseness=10}}
\node (1) at (0,1) [draw,circle,inner sep=1pt,fill=black,label=90:{$v_1$}] {};
\node (2) at ({cos(210)},{sin(210)}) [draw,circle,inner sep=1pt,fill=black,label=180:{$v_2$}] {};
\node (3) at ({cos(330)},{sin(330)}) [draw,circle,inner sep=1pt,fill=black,label=0:{$v_3$}] {};
\node (4) at ({5+2*cos(310)},{2*sin(310)}) [draw,circle,inner sep=1pt,fill=black,label=0:{$v_3^{(1)}$}] {};
\node (5) at ({5+2*cos(350)},{2*sin(350)}) [draw,circle,inner sep=1pt,fill=black,label=0:{$v_3^{(2)}$}] {};
\node (6) at ({5+2*cos(70)},{2*sin(70)}) [draw,circle,inner sep=1pt,fill=black,label=90:{$v_1^{(1)}$}] {};
\node (7) at ({5+2*cos(110)},{2*sin(110)}) [draw,circle,inner sep=1pt,fill=black,label=90:{$v_1^{(2)}$}] {};
\node (8) at ({5+2*cos(190)},{2*sin(190)}) [draw,circle,inner sep=1pt,fill=black,label=180:{$v_2^{(1)}$}] {};
\node (9) at ({5+2*cos(230)},{2*sin(230)}) [draw,circle,inner sep=1pt,fill=black,label=180:{$v_2^{(2)}$}] {};

\path
(1) edge [-,black] node {} (2)
(1) edge [-,black] node {} (3)
(2) edge [-,black] node {} (3)
(4) edge [-,black] node {} (6)
(4) edge [-,black] node {} (7)
(4) edge [-,black] node {} (8)
(4) edge [-,black] node {} (9)
(5) edge [-,black] node {} (6)
(5) edge [-,black] node {} (7)
(5) edge [-,black] node {} (8)
(5) edge [-,black] node {} (9)
(6) edge [-,black] node {} (8)
(6) edge [-,black] node {} (9)
(7) edge [-,black] node {} (8)
(7) edge [-,black] node {} (9);
\end{tikzpicture} 
\end{center}

\end{example}

\section{Trees and adapted complex structures} \label{sec:trees}

In this section, we use the results from previous sections in order to characterize the forests, namely, graphs with no cycles, that admit an adapted complex structure.

First, we point out that:
\begin{itemize}
    \item[$\diamond$] if $T=T(V,E)$ is a tree then $|V|=|E|+1$, hence the associated Lie algebra $\n_T$ has odd dimension,
    \item[$\diamond$] more generally, if the forest $\G=\G(V,E)$ has $k$ connected components then $|V|=|E|+k$.
\end{itemize}

\begin{theorem}\label{thm:tree}
Let $\G$ be a forest, that is, a graph whose connected components are trees. Then the following conditions are equivalent:
\begin{enumerate}
    \item[$\ri$] $\G$ admits an adapted complex structure,
    \item[$\rii$] $\G$ has an even number of connected components, and each connected component has at most one even vertex.
\end{enumerate}
\end{theorem}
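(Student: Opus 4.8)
The plan is to prove both implications using the structural characterization from Section~\ref{sec:construction}, together with the parity constraints already established.

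\textbf{Strategy for $\rii \Rightarrow \ri$.} Suppose $\G$ is a forest with $2k$ connected components $T_1,\ldots,T_{2k}$, each having at most one even vertex. First I would handle a single pair of trees. Given two trees $T,T'$ each with at most one even vertex, I want to build an adapted complex structure on $\n_{T\sqcup T'}$. The natural candidate basic subgraph should be $(p_1,0,0)$, i.e.\ an empty graph, because Example~\ref{ex: A1} and Proposition~\ref{prop: blow} suggest forests coming from expansions of edgeless basic graphs; note that $|V(T)|+|V(T')| = (|E(T)|+1)+(|E(T')|+1)$ is even, so $(p_1,0,0)$ with $2p_1$ vertices is dimensionally possible. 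The idea is to pair up $T$ and $T'$ "root to root": if $r\in T$ and $r'\in T'$ are the (unique, or arbitrarily chosen among leaves if all vertices are odd — but in a tree with $\geq 2$ vertices there are always leaves, which are odd, and the even-vertex count forces the structure) even vertices, root $T$ at $r$ and $T'$ at $r'$, and I would set $J$ to match each vertex of $T$ at depth $d$ with the corresponding vertex of $T'$ at depth $d$ under a chosen tree isomorphism of the rooted shapes — but the shapes need not match, so instead the cleaner route is to realize $T\sqcup T'$ directly as an expansion of $(p_1,0,0)$ by exhibiting a complex-wedge decomposition of its edge set. Concretely: orient each tree away from its root; each non-root vertex $v$ has a unique parent edge; I would pair each parent edge $\seg{vp}$ in $T$ with the parent edge $\seg{Jv\, Jp}$ in $T'$ to form a complex wedge $(p; v, Jv)$ — wait, this requires $Jp$ to be the parent of $Jv$, i.e.\ $J$ must be a rooted-forest isomorphism $T\to T'$. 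That only works when $T\cong T'$. So the correct construction must be more flexible: I expect one builds $J$ by induction on the total number of edges, at each step identifying a leaf $\ell$ of some component and a complex wedge through its unique edge. The key local move: a pendant vertex $\ell$ attached to $u$, together with its "partner" $J\ell$ attached to $Ju$, gives a complex wedge $(u;\ell,J\ell)$ if and only if $u$ and $Ju$ are already matched and $\ell,J\ell$ are fresh isolated-in-the-current-subgraph vertices. This suggests the real induction is on pairs of components simultaneously, peeling matched leaf-pairs. I would make this precise by rooting each $T_i$ at its even vertex (or, if $T_i$ has no even vertex — impossible for $|V(T_i)|\geq 2$ since leaves are odd and handshaking forces... actually a path $P_2$ has two odd vertices and no even vertex, that's fine), pairing components $T_1\leftrightarrow T_2$, etc., choosing for each pair a common "canonical rooted tree" onto which both surject, and building $J$ through a sequence of complex wedges; Theorem~\ref{thm: expanded graph} then gives integrability for free.

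\textbf{Strategy for $\ri \Rightarrow \rii$.} This direction uses the already-proven consequences. Suppose $\G$ is a forest admitting an adapted complex structure $J$, with associated basic subgraph $\G_b=(p_1,p_2,p_3)$. Since $\G$ is acyclic and $\G_b$ is a spanning subgraph, $\G_b$ is also acyclic; but $\A_3$ and every $\A_2$-with-expansion tends to create triangles — more precisely, by Proposition~\ref{prop:girth}$\ri$, if $p_1=0$ and $\G$ is not basic, the girth is $3$, contradicting acyclicity; and if $\G$ is basic, $\G$ is a disjoint union of $\A_1,\A_2,\A_3$, which are forests, so that case is allowed. In the non-basic case we need $p_1\geq 1$. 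Moreover I claim $p_2=p_3=0$: any $\A_3$ in $\G_b$ has two distinguished (hence present) edges $e_{1,2},e_{3,4}$; if any complex wedge is attached through, say, $v_1$ with endpoints $v_3,v_4$, that produces a triangle $v_1v_3v_4$; if no wedge is ever attached to the $\A_3$'s vertices then this $\A_3$ is itself a connected component $\cong \A_3$ which has $4$ odd... no, $\A_3$ has four vertices all of degree $1$, so all odd — but wait $\A_3$ as a component is a forest with two components itself ($K_2\sqcup K_2$), contradiction with it being a single component. Hmm — actually $\A_3 = K_2 \sqcup K_2$ is disconnected, so as pieces of $\G$ its two edges lie in (possibly different) tree components. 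I would argue instead: an $\A_3$ contributes edges whose $J$-images are edges; if such an edge's wedge-partners appear we get a triangle, else $J$ maps the edge $e_{1,2}$ to the edge $e_{3,4}$ and both are "terminal" — then $\{v_1,v_2,v_3,v_4\}$ with exactly the edges $e_{1,2},e_{3,4}$ forms a union of components isomorphic to $K_2$, which is fine as a forest. So $p_3$ need not vanish a priori; I must use the even-vertex condition more carefully. The cleanest path: I would use Corollary~\ref{cor:odd} (odd vertices are $J$-invariant), Corollary~\ref{cor: vertices impares lejos}/Proposition~\ref{prop: vertices impares lejos} (if $v,Jv$ both odd then $d(v,Jv)\leq 2$, so they lie in the same component), and Lemma~\ref{lem:misc}(i) (the isolated vertex of an $\A_2$ becomes even in $\G$). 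Counting: in a tree $T_i$ the number of odd vertices is even; $J$ permutes the odd vertices, and since $\G$ is a forest, odd vertices that are $J$-paired must lie in the same component (distance $\leq 2$). If $Jv$ is an even vertex while $v$ is odd — impossible by Corollary~\ref{cor:odd}. So $J$ restricted to odd vertices is a fixed-point-free involution preserving components; and on even vertices $J$ may pair a vertex with an even vertex of another component, or with an edge. The parity statements about $p_2$ and $|V|$ (Lemma~\ref{lem:misc}(ii)) plus the relation $|V| = |E| + (\#\text{components})$ should combine to force the number of components to be even. For "at most one even vertex per component": within a tree component $T_i$, every even vertex either is matched by $J$ with an edge (then by Lemma~\ref{lem:central}/\ref{lem:misc} it is the center-type vertex of an $\A_2$, and we bound how many such can appear) or with an even vertex; I would show a second even vertex in $T_i$ forces a cycle, via the complex-wedge structure of the expansion constraining the tree too rigidly.

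\textbf{Main obstacle.} The hard part will be the careful bookkeeping in $\ri\Rightarrow\rii$ showing that a second even vertex in one tree component is incompatible with acyclicity — the complex-wedge decomposition of the edge set, combined with the fact that distinguished edges are never adjacent and every non-distinguished edge sits in a unique complex wedge (Proposition~\ref{prop: endpoint}), must be leveraged to produce a cycle. I expect one proceeds by: taking the basic subgraph $\G_b$, noting each tree component $T_i$ decomposes as (distinguished edges from $\G_b$) $\cup$ (complex wedges), and then a connectivity/Euler-characteristic count — each added complex wedge contributes $2$ edges and at most $1$ new vertex to a connected piece (since the center $u$ and at least... hmm, actually it may contribute $0$, $1$, or $2$ new vertices) — showing that to keep $T_i$ a tree, the number of complex wedges is tightly constrained, and that constraint caps the number of even vertices at one. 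Making the Euler-characteristic argument airtight across all cases (wedges sharing centers, wedges sharing endpoints, distinguished edges feeding into wedges) is where the real work lies; everything else follows from results already in the paper.
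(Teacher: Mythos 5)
There is a genuine gap, and it is structural rather than a matter of bookkeeping. In your plan for $\rii\Rightarrow\ri$ you try to build $J$ by matching vertices of one tree with vertices of the paired tree (depth-matching under a rooted isomorphism, then ``peeling matched leaf-pairs'' with $\ell$ attached to $u$ and $J\ell$ attached to $Ju$, then a ``common canonical rooted tree onto which both surject''). None of these variants can work. A complex wedge $(u;\ell,J\ell)$ requires \emph{both} $\ell$ and $J\ell$ to be adjacent to the \emph{same} center $u$, so your ``key local move'' with $J\ell$ attached to $Ju\neq u$ is not a complex wedge at all; worse, if $J$ sends a vertex of $T$ to a vertex of a different component, Remark \ref{rem:different-components} forces both vertices to have even degree, which is impossible for the leaves that every tree with an edge has. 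So for trees with edges $J$ simply cannot pair the vertex sets of the two components, independently of whether $T\cong T'$. The construction that actually works (and is the one in the paper) pairs vertices \emph{within} each tree: one roots each component at its even vertex (or at a chosen edge when all vertices are odd), pairs the children of each vertex among themselves so that each such sibling pair together with their common parent is a complex wedge, and lets the only cross-component data be the ``root level'' object — the two even vertices forming an $\A_1$, an even vertex plus an edge of the other tree forming an $\A_2$, or one edge from each tree forming an $\A_3$ — after which Theorem \ref{thm: expanded graph} gives integrability. Your sketch never reaches this picture, and the parity hypothesis (each non-root vertex is odd, hence has an even number of children) is exactly what makes the sibling pairing possible; that is the point of condition $\rii$.

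For $\ri\Rightarrow\rii$ you explicitly defer the decisive step (``where the real work lies''), so this direction is not proved either. The missing ingredients are two forest-specific observations from which everything follows by reading off degrees, with no Euler-characteristic count needed: (a) in a forest the two endpoints $v,Jv$ of a complex wedge are never adjacent (a triangle would appear), so every wedge's endpoint pair is an $\A_1$ of the basic subgraph, and consequently vertices lying in an $\A_2$ or $\A_3$ can only occur as wedge \emph{centers}; (b) an $\A_1$ vertex can be the endpoint of at most one wedge (two wedges on the same endpoint pair create a $4$-cycle), though it may be the center of several. Hence $\deg(v)=\varepsilon(v)+2\cdot\#\{\text{wedges centered at }v\}$, where $\varepsilon(v)\in\{0,1\}$ records whether $v$ carries a distinguished edge or is the endpoint of its unique wedge; tracing a component outward from its distinguished edge, its isolated-$\A_2$ vertex, or its unique $\A_1$ vertex that is not a wedge endpoint, one sees that all other vertices are odd, giving at most one even vertex per component. (The evenness of the number of components is immediate from $\dim\n_\G=|V|+|E|$ being even together with $|V|=|E|+k$; your detour through Lemma \ref{lem:misc}$\rii$ can be made to work but is unnecessary.) Your side remarks on $p_2,p_3$ — first claiming they vanish, then retracting — reflect the same missing observations (a) and (b).
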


\begin{proof}
$\ri \Rightarrow \rii$ Since $\G$ has an adapted complex structure $J$, the associated Lie algebra $\n_\G$ has even dimension, therefore, the number $k$ of connected components of $\G$ is even. 

If $\G$ is a  basic graph, then it is a forest whose connected components are either isolated vertices or trees isomorphic to the graph $K_2$; therefore, it satisfies condition (ii). 

Assume now that $\G$ is not basic, and let $\G_b$ be the basic subgraph associated with $(\G,J)$, so that $\G$ is an expansion of $\G_b$. The main observation is that if $(u;v,Jv)$ is a complex wedge in $\G$ then there is no edge connecting the vertices $v$ and $Jv$, since otherwise there would be a 3-cycle in $\G$, which is not possible. Thus, the endpoints $v$ and $Jv$ of the complex wedge determine an $\A_1$ subgraph. In particular, this implies that the vertices of $\G$ which are in an $\A_2$ or $\A_3$ subgraph cannot be an endpoint of any complex wedge, they can only be the center of a complex wedge.

Next, we point out that the vertices in an $\A_1$ subgraph can be the endpoints of a complex wedge, but they cannot be the endpoints of two or more complex wedges, since otherwise there would be cycles in $\G$, a contradiction. Thus, a vertex in an $\A_1$ subgraph can be an endpoint of at most one complex wedge, but it could be the center of several complex wedges. In particular, the non-distinguished edges of $\G$ always have at least one endpoint in an $\A_1$ subgraph. 

Consider now an edge $a$ in an $\A_2$ or an $\A_3$ subgraph with vertices $v,\,w$, with $w=Jv$. If $\deg(v)=\deg(w)=1$ then $a$ is a connected component of $\G$ that satisfies condition $\rii$. If, say, $\deg(v)>1$, then $v$ can only be the center of several complex wedges (hence, $\deg(v)$ is odd), and the endpoints of each of these complex wedges determine an $\A_1$ subgraph. Moreover, these endpoints either have degree one or are again centers of other complex wedges whose endpoints are in an $\A_1$ subgraph. This process continues, but eventually it will end. Thus, each of the vertices in the $\A_1$ subgraphs has odd degree. The same happens with the vertex $w$, and therefore the connected component of $\G$ containing the edge $a$ has all its vertices of odd degree, satisfying condition $\rii$. 

Consider now a vertex $v$ of $\G$ which, when considered as a vertex in the basic subgraph $\G_b$, is the isolated vertex of an $\A_2$ subgraph. If $v$ is isolated in $\G$ then it is a connected component of $\G$ satisfying condition $\rii$. If $v$ is not isolated in $\G$ then we know that it can only be the center of several complex wedges (hence, $\deg(v)$ is odd), and the endpoints of each of these complex wedges are in an $\A_1$ subgraph. Moreover, these endpoints either have degree one or are again centers of other complex wedges whose endpoints are in an $\A_1$ subgraph. This process continues, but eventually it will end. Thus, $\deg(v)$ is even (as predicted by Lemma \ref{lem:misc}) 
and each of the vertices in the $\A_1$ subgraphs has odd degree. Therefore, the connected component of $\G$ containing $v$ satisfies condition $\rii$, since $v$ has even degree and all the other vertices (which are in some $\A_1$ subgraph) have odd degree. 

Finally, consider a connected component $T$ of $\G$ which does not contain an edge of an $\A_2$ or $\A_3$ subgraph, and does not contain an isolated vertex of an $\A_2$ subgraph. That is, all the vertices in $T$ are in an $\A_1$ subgraph. We may assume also that $T$ is not an isolated vertex. Let $v_1$ a vertex in $T$ with $\deg(v_1)=1$. Therefore, $v_1$ is an endpoint of a complex wedge $(v_2;v_1,\pm Jv_1)$. 
\begin{itemize}
    \item[$\diamond$] If $v_2$ is not the endpoint of a complex wedge then it could be the center of some other complex wedges, and this implies that $\deg(v_2)$ is even. Moreover, any vertex $w$ connected to $v_2$ has odd degree, since there is the edge connecting $w$ with $v_2$, and $w$ could be the center of several complex wedges. Now, any vertex $x$ connected to $w$ (different from $v_2$) again has odd degree, since there is the edge connecting $x$ with $w$ and $x$ could be the center of several complex wedges. This shows that, in this case, $v_2$ has even degree and all the other vertices have odd degree. Hence, $T$ satisfies condition $\rii$.
    \item[$\diamond$] If $v_2$ is the endpoint of a complex wedge $(v_3;v_2,\pm Jv_2)$ then there are again two possibilities: $v_3$ is not the endpoint of any complex wedge, or it is the endpoint of a complex wedge. 
\end{itemize}
Following this procedure, and since there are no cycles in $T$, we will eventually get a vertex $v_j$ which is not the endpoint of any complex wedge. Reasoning as above, we may see that $v_j$ has even degree, and all the other vertices in $T$ have odd degree. Hence, condition $\rii$ also holds in this case. 

\medskip

$\rii\Rightarrow\ri$ Now assume that the forest $\G$ has an even number of connected components, and that each component has at most one even vertex. Group these components arbitrarily into pairs. Each pair necessarily falls into exactly one of the following cases:
\begin{enumerate}[(a)]
    \item both trees have exactly one even vertex,
    \item one tree has exactly one even vertex while all the vertices in the other tree are odd,
    \item all vertices in both trees of the pair are odd.
\end{enumerate}
We will show, in each case, that the graph formed by each pair of connected components of $\G$ has an spanning subgraph isomorphic to a basic graph, and the other edges in this graph are part of a wedge, so that it can be considered as an expansion of the basic subgraph. In particular, this graph with a pair of connected components has an adapted complex structure, and hence, $\G$ has an adapted complex structure.

For case (a), let $T_1$ and $T_2$ denote the trees in the pair, and let $v_i$ be the only even vertex in $T_i$, $i=1,2$. Consider the empty graph formed by the vertices $v_1$ and $v_2$, which is isomorphic to $\A_1$. If $\deg(v_1)=\deg(v_2)=0$, then we are done, since $T_1\cup T_2=\A_1$ has an adapted complex structure. Assume now that at least one of these vertices has positive degree, say $\deg(v_1)>0$. Then there are vertices $v_{1,1},\ldots,v_{1,2m_1}$ that are adjacent to $v_1$. We can group them in pairs $\{v_{1,2i-1},v_{1,2i}\}$, and each pair determines an $\A_1$ subgraph (there is no edge between the vertices in each pair since otherwise the graph would have a cycle, which is not possible), and $(v_1;v_{1,2i-1},v_{1,2i})$ is a wedge. Each vertex $v_{1,j}$ has odd degree and is already connected to $v_1$, so that it is connected to an even number of vertices different from $v_1$ (it could be zero). If, for instance, $v_{1,j}$ is connected to $v_{1,j,1},\ldots,v_{1,j,2m_2}$, we repeat what we did before: we group them in pairs $\{v_{1,j,2i-1},v_{1,j,2i}\}$, each of these pairs determines an $\A_1$ subgraph and $(v_{1,j};v_{1,j,2i-1},v_{1,j,2i})$ is a wedge. Each vertex $v_{1,j,k}$ has odd degree and is already connected to $v_{1,j}$, so that it is connected to an even number of vertices different from $v_{1,j}$ (it could be zero). We can repeat the process until we arrive at vertices that have all degree equal to one. Thus, we can consider $T_1\cup T_2$ as the expansion of a basic subgraph consisting of several copies of $\A_1$. Re-enumerating arbitrarily the vertices in $T_1\cup T_2$ and using \eqref{eq: Jtilde} we see that $T_1\cup T_2$ admits an adapted complex structure.

The other cases are treated similarly. For case (b), suppose that $T_1$ has a unique even vertex, say $v_1$, and, on the other hand, $T_2$ has only odd vertices. Choose any edge $a$ in $T_2$, with endpoints $x$ and $y$, and consider the subgraph with vertices $v_1,x,y$ and only edge $a=\overline{xy}$. This subgraph is isomorphic to $\A_2$; if this subgraph is the whole graph $T_1\cup T_2$, we are done. Otherwise, if $\deg(x)>0$ or $\deg(y)>0$, then since $x$ and $y$ have odd degree but we have already considered the edge $a=\overline{xy}$, each of these two vertices is connected to an even number of vertices different from $x$ and $y$. Moreover, $v_1$ is also connected to an even number of vertices. Thus, if $\deg(v_1)>0$ we may apply the same procedure as before and see that $T_1\cup T_2$ is obtained from the expansion of the basic subgraph formed by this $\A_2$ and several copies of $\A_1$. Hence, $T_1\cup T_2$ admits an adapted complex structure.

Finally, for case (c), choose any edge $a_1=\overline{x_1y_1}$ in $T_1$ and any edge $a_2=\overline{x_2y_2}$ in $T_2$. The subgraph formed by the vertices $x_1,y_1,x_2,y_2$ and the edges $a_1,\, a_2$ is isomorphic to $\A_3$. If this subgraph is the whole graph $T_1\cup T_2$ then we are done. Otherwise, $x_1,y_1,x_2,y_2$ have odd degree, but since they are already connected with the edges $a_1$ and $a_2$, it follows that each of them is adjacent to an even number of other vertices, and the set of vertices connected to $x_i$ is disjoint with the set of vertices connected to $y_i$ (since there are no cycles in $T_i$), for $i=1,2$. Therefore, we may apply the same procedure as before and see that $T_1\cup T_2$ can be seen as the expansion of the basic subgraph formed by this $\A_3$ and several copies of $\A_1$. Hence, $T_1\cup T_2$ admits an adapted complex structure.
\end{proof}

\begin{remark}
For a forest $\G$ consisting of two connected components $T_1$ and $T_2$, we note that:
\begin{itemize}
\renewcommand{\labelitemi}{$\diamond$}
    \item if both $T_1$ and $T_2$ have only one even vertex, then any adapted complex structure on $\G$ sends, up to sign, the even vertex in $T_1$ to the even vertex in $T_2$;
    \item if $T_1$ has only one even vertex and all the vertices in $T_2$ are odd, then the image of the even vertex by an adapted complex structure may be any edge in $T_2$, which is the only distinguished edge in $\G$;
    \item if all vertices in $T_1$ and $T_2$ are odd, then, for any adapted complex structure $J$ on $\G$, there is exactly one distinguished edge in $T_1$ and one distinguished edge in $T_2$ such that $J$ sends one of them to the other one.
\end{itemize}
\end{remark}

\begin{example}\label{ex:trees}
We exhibit next examples of forests with two connected components $T_1$ and $T_2$ that admit an adapted complex structure. In the examples, the tree $T_1$ has vertices $\{v_i\}$ and edges $\{e_{i,j}\}$, while the tree $T_2$ has vertices $\{w_i\}$ and edges $\{f_{i,j}\}$ .

  \begin{enumerate}[(i)]
      \item Let $T_1$ and $T_2$ be two trees with only one even vertex, as shown below: 
\begin{center}
\begin{tikzpicture}\footnotesize
    \tikzset{every loop/.style={looseness=-10}}
    \node (1) at (0,1) [draw,circle,inner sep=1pt,fill=black,label=90:{$v_1$}] {};
    \node (2) at (-1,0) [draw,circle,inner sep=1pt,fill=black,label=180:{$v_2$}] {};
    \node (3) at (1,0) [draw,circle,inner sep=1pt,fill=black,label=0:{$v_3$}] {};
    \node (4) at (-1.5,-1) [draw,circle,inner sep=1pt,fill=black,label=270:{$v_4$}] {};
    \node (5) at (-0.5,-1) [draw,circle,inner sep=1pt,fill=black,label=270:{$v_5$}] {};
    \node (6) at (0.5,-1) [draw,circle,inner sep=1pt,fill=black,label=270:{$v_6$}] {};
    \node (7) at (1.5,-1) [draw,circle,inner sep=1pt,fill=black,label=270:{$v_7$}] {};
    \path
    (1) edge [-, black] (2)
    (1) edge [-, black] (3)
    (2) edge [-, black] (4)
    (2) edge [-, black] (5)
    (3) edge [-, black] (6)
    (3) edge [-, black] (7);
    \end{tikzpicture} \qquad \qquad \qquad
\begin{tikzpicture}\footnotesize
  \tikzset{every loop/.style={looseness=-10}}
    \node (1) at (0,1) [draw,circle,inner sep=1pt,fill=black,label=90:{$w_1$}] {};
    \node (2) at (-1,0) [draw,circle,inner sep=1pt,fill=black,label=180:{$w_2$}] {};
    \node (3) at (1,0) [draw,circle,inner sep=1pt,fill=black,label=0:{$w_3$}] {};
    \node (4) at (-1.5,-1) [draw,circle,inner sep=1pt,fill=black,label=270:{$w_4$}] {};
    \node (5) at (-0.5,-1) [draw,circle,inner sep=1pt,fill=black,label=270:{$w_5$}] {};
    \path
    (1) edge [-, black] (2)
    (1) edge [-, black] (3)
    (2) edge [-, black] (4)
    (2) edge [-, black] (5);
    \end{tikzpicture}
\end{center}

\noindent In this case, an adapted complex structure $J$ on $T_1\cup T_2$ is given by:
\begin{multicols}{4}
\noindent $Jv_1=w_1$,\\
$Jv_2=v_3$,\\
$Jv_4=v_5$,\\
$Jv_6=v_7$,\\
$Jw_2=w_3$,\\
$Jw_4=w_5$,\\
$Je_{1,2}=e_{1,3}$,\\
$Je_{2,4}=e_{2,5}$,\\
$Je_{3,6}=e_{3,7}$,\\
$Jf_{1,2}=f_{1,3}$,\\
$Jf_{2,4}=f_{2,5}$.\\
\end{multicols}

\item Let $T_1$ be a tree with only one even vertex and $T_2$ a tree whose vertices are all odd, as shown below: 

\begin{center}
\begin{tikzpicture}\footnotesize
    \tikzset{every loop/.style={looseness=-10}}
    \node (1) at (0,1) [draw,circle,inner sep=1pt,fill=black,label=90:{$v_1$}] {};
    \node (2) at (-1,0) [draw,circle,inner sep=1pt,fill=black,label=180:{$v_2$}] {};
    \node (3) at (1,0) [draw,circle,inner sep=1pt,fill=black,label=0:{$v_3$}] {};
    \node (4) at (-1.5,-1) [draw,circle,inner sep=1pt,fill=black,label=270:{$v_4$}] {};
    \node (5) at (-0.5,-1) [draw,circle,inner sep=1pt,fill=black,label=270:{$v_5$}] {};
    \path
    (1) edge [-, black] (2)
    (1) edge [-, black] (3)
    (2) edge [-, black] (4)
    (2) edge [-, black] (5);
    \end{tikzpicture} \qquad \qquad \qquad
\begin{tikzpicture}\footnotesize
  \tikzset{every loop/.style={looseness=-10}}
    \node (1) at (-1,1) [draw,circle,inner sep=1pt,fill=black,label=90:{$w_3$}] {};
    \node (2) at (-1,-1) [draw,circle,inner sep=1pt,fill=black,label=180:{$w_4$}] {};
    \node (3) at (0,0) [draw,circle,inner sep=1pt,fill=black,label=180:{$w_1$}] {};
    \node (4) at (1,0) [draw,circle,inner sep=1pt,fill=black,label=0:{$w_2$}] {};
    \node (5) at (2,1) [draw,circle,inner sep=1pt,fill=black,label=45:{$w_5$}] {};
    \node (6) at (2,-1) [draw,circle,inner sep=1pt,fill=black,label=-45:{$w_6$}] {};
    \path
    (1) edge [-, black] (3)
    (2) edge [-, black] (3)
    (3) edge [-, black] (4)
    (4) edge [-, black] (5)
    (4) edge [-, black] (6);
    \end{tikzpicture}
\end{center}

\noindent An adapted complex structure $J$ on $T_1\cup T_2$ is given by:
\begin{multicols}{4}
\noindent $Jv_1=f_{1,2}$,\\
$Jv_2=v_3$,\\
$Jv_4=v_5$,\\
$Jw_1=w_2$,\\
$Jw_3=w_4$,\\
$Jw_5=w_6$,\\
$Je_{1,2}=e_{1,3}$,\\
$Je_{2,4}=e_{2,5}$,\\
$Jf_{1,3}=f_{1,4}$,\\
$Jf_{2,5}=f_{2,6}$.
\end{multicols}

\item Let $T_1$ and $T_2$ be trees in which all vertices are odd, as shown below:

\begin{center}
\begin{tikzpicture}\footnotesize
   \tikzset{every loop/.style={looseness=-10}}
    \node (1) at (-1,1) [draw,circle,inner sep=1pt,fill=black,label=90:{$v_3$}] {};
    \node (2) at (-1,-1) [draw,circle,inner sep=1pt,fill=black,label=180:{$v_4$}] {};
    \node (3) at (0,0) [draw,circle,inner sep=1pt,fill=black,label=180:{$v_1$}] {};
    \node (4) at (1,0) [draw,circle,inner sep=1pt,fill=black,label=0:{$v_2$}] {};
    \node (5) at (2,1) [draw,circle,inner sep=1pt,fill=black,label=45:{$v_5$}] {};
    \node (6) at (2,-1) [draw,circle,inner sep=1pt,fill=black,label=-45:{$v_6$}] {};
    \path
    (1) edge [-, black] (3)
    (2) edge [-, black] (3)
    (3) edge [-, black] (4)
    (4) edge [-, black] (5)
    (4) edge [-, black] (6);
    \end{tikzpicture} \qquad \qquad \qquad
\begin{tikzpicture}\footnotesize
  \tikzset{every loop/.style={looseness=-10}}
    \node (1) at (-1,1) [draw,circle,inner sep=1pt,fill=black,label=90:{$w_3$}] {};
    \node (2) at (-1,-1) [draw,circle,inner sep=1pt,fill=black,label=180:{$w_4$}] {};
    \node (3) at (0,0) [draw,circle,inner sep=1pt,fill=black,label=180:{$w_1$}] {};
    \node (4) at (1,0) [draw,circle,inner sep=1pt,fill=black,label=0:{$w_2$}] {};
    \node (5) at (2,1) [draw,circle,inner sep=1pt,fill=black,label=45:{$w_5$}] {};
    \node (6) at (2,-1) [draw,circle,inner sep=1pt,fill=black,label=-45:{$w_6$}] {};
    \path
    (1) edge [-, black] (3)
    (2) edge [-, black] (3)
    (3) edge [-, black] (4)
    (4) edge [-, black] (5)
    (4) edge [-, black] (6);
    \end{tikzpicture}
\end{center}    

\noindent An adapted complex structure $J$ on $T_1\cup T_2$ is given by:
\begin{multicols}{4}
\noindent $Jv_1=v_2$,\\
$Jv_3=v_4$,\\
$Jv_5=v_6$,\\
$Jw_1=w_2$,\\
$Jw_3=w_4$,\\
$Jw_5=w_6$,\\
$Je_{1,2}=f_{1,2}$,\\
$Je_{1,3}=e_{1,4}$,\\
$Je_{2,5}=e_{2,6}$,\\
$Jf_{1,3}=f_{1,4}$,\\
$Jf_{2,5}=f_{2,6}$.\\
\end{multicols}
  \end{enumerate}  
\end{example}

\begin{remark}\label{rem:2trees}
Consider as above a forest $\G$ that is the union of two trees $T_1$ and $T_2$, and assume that $\G$ is equipped with an adapted complex structure $J$. Next, we determine the possible associated basic subgraphs $\G_b$:
\begin{itemize}
\renewcommand{\labelitemi}{$\diamond$}
    \item if there is exactly one even vertex in both $T_1$ and $T_2$ then the associated basic subgraph is $\G_b=\left(\frac{|V|}{2},0,0\right)$, where one of the $\A_1$ subgraphs is formed by the two even vertices;
    \item if $T_1$ has exactly one even vertex and $T_2$ has all its vertices odd then the associated basic subgraph is $\G_b=\left(\frac{|V|-3}{2},1,0\right)$, where the even vertex in $T_1$ is the isolated vertex of the only copy of $\A_2$;
    \item if all vertices in both $T_1$ and $T_2$ are odd then the associated basic subgraph is $\G_b=\left(\frac{|V|-4}{2},0,1\right)$.
\end{itemize}
For the concrete examples appearing in Example \ref{ex:trees}, we have the following:
\begin{itemize}
\renewcommand{\labelitemi}{$\diamond$}
    \item In Case (i), the associated basic subgraph is $\G_b=(6,0,0)$, where the $\A_1$ subgraphs have vertices $\{v_1,w_1\}$, $\{v_2,v_3\}$, $\{v_4,v_5\}$, $\{v_6,v_7\}$, $\{w_2,w_3\}$, $\{w_4,w_5\}$.
    \item In Case (ii), the associated basic subgraph is $\G_b=(5,1,0)$, where the $\A_1$ subgraphs have vertices $\{v_2,v_3\}$, $\{v_4,v_5\}$, $\{v_6,v_7\}$, $\{w_3,w_4\}$, $\{w_5,w_6\}$, and the subgraph $\A_2$ has vertices $\{v_1,w_1,w_2\}$ and edge $f_{1,2}$.
    \item In Case (iii), the associated basic subgraph is $\G_b=(4,0,1)$, where the $\A_1$ subgraphs have vertices $\{v_3,v_4\}$, $\{v_5,v_6\}$, $\{w_3,w_4\}$, $\{w_5,w_6\}$, and the subgraph $\A_3$ has vertices $\{v_1,v_2,w_1,w_2\}$, and edges $\{e_{1,2}, f_{1,2}\}$.
\end{itemize}
\end{remark}

\begin{example}
    Let $P_n$ denote the path graph with $n$ vertices, that is, $V=\{v_1,\ldots,v_n\}$ and $E=\{e_{i,i+1}=\overline{v_iv_{i+1}} \mid i=1,\ldots, n-1\}$. Clearly, $P_n$ is a tree and $\deg(v_1)=\deg(v_n)=1$ and $\deg(v_i)=2$ for $i=2,\ldots, n-1$. Thus, according to Theorem \ref{thm:tree}, the graph $(P_n)_\ast$ admits an adapted complex structure if and only if $n=2$ or $n=3$. 
    
However, we show next that $(P_4)_\ast$ does admit complex structures, necessarily not adapted. Indeed, let $v_1,\ldots, v_5$ the vertices of $(P_4)_\ast$, with $v_5$ the isolated vertex, and let $e_{1,2}:=\overline{v_1v_2}$, $e_{2,3}:=\overline{v_2v_3}$, $e_{3,4}:=\overline{v_3v_4}$ be the edges. Consider the almost complex structure $J$ on $\n_{(P_4)_\ast}$ defined by
\begin{multicols}{4}
\noindent $Jv_1=v_3$, \\
$Jv_3=-v_1$,\\ 
$Jv_2=v_4$, \\
$Jv_4=-v_2$,\\
$Jv_5=e_{1,2}+e_{3,4}$, \\
$Je_{1,2}=-\frac12 (v_5+e_{2,3})$,\\
$Je_{2,3}=e_{1,2}-e_{3,4}$, \\
$Je_{3,4}=-\frac12 (v_5-e_{2,3})$.
\end{multicols}
\noindent Then it is easily verified that $J$ is indeed integrable.
\end{example}

\begin{example}
Assume that $\G$ is a forest with two connected components that admits an adapted complex structure. It follows from Proposition \ref{prop:3-step} that $\G$ carries a $3$-step nilpotent complex structure if and only if all the vertices of one of the trees are odd, and the other tree has exactly one even vertex, which is not isolated.
\end{example}

%
%
%
%
%

\section{Special Hermitian structures on Lie algebras from graphs}\label{sec:metrics}

A Hermitian structure on a smooth manifold $M$ is a pair $(J,g)$ where $J$ is a complex structure and $g$ is a Hermitian metric, that is, a Riemannian metric such that $J$ is $g$-orthogonal or equivalently, $g$-skew-symmetric. We say that $(M,J,g)$ is a Hermitian manifold. 

The fundamental 2-form associated with $(J,g)$ is $\omega(\cdot,\cdot)=g(J\cdot,\cdot)$. When $\omega$ is closed the Hermitian manifold (or the metric $g$) is called \textit{Kähler}. The topology of compact K\"ahler manifolds is well understood: for instance, the odd-indexed Betti numbers of such a manifold are even, and the even-indexed Betti numbers are always positive. The Kähler condition may be too restrictive, and as a consequence, other conditions on the fundamental 2-form which are weaker than being closed have been introduced. 
We recall next some of these non-K\"ahler Hermitian conditions. If $(M,J,g)$ is a Hermitian manifold with $\dim_\C M=n$, then the metric $g$ is called:
\begin{enumerate}
\item \textit{Balanced} if its fundamental form $\omega$ satisfies $d\omega^{n-1}=0$, or equivalently $\delta\omega=0$, where $\delta$ is the codifferential associated to $g$. The study of balanced metrics started with the work of Gauduchon (see \cite{gau-BSMF}, where the denomination \textit{semi-Kähler} was used) and Michelsohn (see \cite{Mic}). They are particularly interesting since they are invariant by modifications \cite{AB96}.

\item \textit{Strong Kähler with torsion} (SKT), also called \textit{pluriclosed}, if its fundamental form $\omega$ satisfies $\partial\bar{\partial}\omega=0$. This condition is equivalent to $dc=0$, where $c$ denotes the torsion $3$-form associated with the Bismut connection $\nabla^b$ determined by $(J,g)$. Recall that $\nabla^b$ is the unique connection on $M$ satisfying $\nabla^bJ=\nabla^bg=0$ and whose torsion $T^b$ is skew-symmetric, that is, $c(X,Y,Z):=g(T^b(X,Y),Z)$ is a $3$-form on $M$. SKT metrics have applications in type II string theory and in 2-dimensional supersymmetric $\sigma$-models \cite{IP,St} and have relations with generalized K\"ahler geometry (see for instance \cite{AG,Gua}).
\end{enumerate}

\medskip

Next, we will study the existence of balanced and SKT metrics on 2-step nilpotent Lie algebras arising from graphs equipped with an adapted complex structure. Such a Hermitian structure on the Lie algebra $\n$ gives rise to a left-invariant Hermitian structure on the corresponding simply connected nilpotent Lie group $N$, which, in turn, induces an invariant Hermitian structure on any associated nilmanifold $\Gamma \backslash N$ (that is, a quotient of $N$ by a co-compact discrete subgroup $\Gamma$); these structures on the nilmanifold are still balanced or SKT, respectively. We point out that there is no need to look for Kähler metrics on nilmanifolds, since it was proved in \cite{BG} that if a nilmanifold $\Gamma \backslash N$ admits a (not necessarily invariant) Kähler structure then $N$ is abelian and the nilmanifold is diffeomorphic to a torus. 

In the realm of nilmanifolds, balanced Hermitian structures have been studied for instance in \cite{AV, LUV-IJM, ST, UV}, while SKT Hermitian metrics have been considered in \cite{AN, EFV, EFV1, FPS}.

\medskip

\subsection{Balanced metrics compatible with adapted complex structures}

If $\g$ is a unimodular Lie algebra equipped with a Hermitian structure $(J,\pint)$ then it is well known that $\pint$ is balanced if and only if
\begin{equation} \label{eq: balanced}
\sum_{i=1}^{2n} [e_i, Je_i]=0, 
\end{equation}
where $\{e_1,\ldots, e_{2n}\}$ is an orthonormal basis of $\g$ (see, for example, \cite{AV}). In particular, this applies to nilpotent Lie algebras.

If $\G$ is a graph and $\n_\G$ is the associated 2-step nilpotent Lie algebra, then condition \eqref{eq: balanced} can be understood as follows:

\begin{proposition}\label{prop: balanced}
Let $\G$ be a graph with vertices $v_1,\ldots, v_n$ that admits an adapted complex structure $J$. If $\pint$ is a Hermitian metric on $\n_\G$ such that $\{v_1,\ldots, v_n\}$ is an orthogonal set, then $\pint$ is balanced if and only if there are no distinguished edges. Equivalently, the basic subgraph $\G_b$ associated with $(\G,J)$ is a union of copies of $\A_1$. In particular, if $\G$ is not the empty graph then the adapted complex structure is not abelian.
\end{proposition}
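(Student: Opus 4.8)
The plan is to run everything through the balanced criterion \eqref{eq: balanced}: since $\n_\G$ is nilpotent, hence unimodular, $\pint$ is balanced if and only if $S:=\sum_i[e_i,Je_i]=0$ for one (equivalently every) orthonormal basis $\{e_i\}$. First I would record the elementary fact that $S$ is independent of the orthonormal basis chosen (if $f_j=\sum_iA_{ij}e_i$ with $A$ orthogonal, then $\sum_j[f_j,Jf_j]=\sum_{i,k}(AA^{T})_{ik}[e_i,Je_k]=S$); by the same bilinearity one gets, for \emph{any} basis $\{f\}$ with $\pint$-dual basis $\{f^{\ast}\}$, the convenient formula $S=\sum_{f}[f,Jf^{\ast}]$. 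Once the equivalence ``$\pint$ balanced $\Leftrightarrow$ $\G$ has no distinguished edges'' is established, the ``equivalently'' clause and the final assertion are short consequences of Theorem \ref{thm: existe basico} and Proposition \ref{prop: basic}, respectively, which I address in the last paragraph.

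The heart of the argument is to evaluate $S$ using the basis $V\cup E$ of $\n_\G$. Since $E$ spans the central commutator ideal $W_1$, the summands coming from edges vanish, so $S=\sum_{i=1}^{n}[v_i,Jv_i^{\ast}]$. Next I would prove the key lemma that, because $J$ is adapted and $\pint$-skew, $Jv_i^{\ast}=\pm v_j^{\ast}$ (resp.\ $\pm e^{\ast}$) precisely when $Jv_i=\pm v_j$ (resp.\ $\pm e$) --- both vectors have the same inner products against every element of $V\cup E$. A short computation with the defining property of the dual basis and the very rigid bracket relation \eqref{eq:property-graph} then shows that the only contributions of $S$ to the coefficient of a given edge $\seg{v_av_b}$ come from $i=a$ and $i=b$, and that this coefficient equals $2\langle Jv_a^{\ast},v_b^{\ast}\rangle$ (the two contributions agreeing by skew-symmetry of $J$). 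If $\seg{v_av_b}$ is distinguished, then $Jv_a=\pm v_b$, hence $Jv_a^{\ast}=\pm v_b^{\ast}$ and this coefficient is $\pm2\|v_b^{\ast}\|^{2}\neq0$; therefore $S\neq0$ and $\pint$ is not balanced. (Concretely, normalizing to $u_i=v_i/\|v_i\|$, with $\|v_a\|=\|v_b\|$ since $J$ is orthogonal, the terms $i=a,b$ give $\pm2\,e_{ab}/\|v_a\|^{2}$, and distinct distinguished edges are linearly independent in $W_1$.)

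For the converse, assume $\G$ has no distinguished edges; I must show every edge-coefficient of $S$ vanishes, i.e.\ $\langle Jv_a^{\ast},v_b^{\ast}\rangle=0$ for every edge $\seg{v_av_b}$. Since such an edge is not distinguished, $Jv_a=\pm v_c$ with $c\neq b$, or $Jv_a=\pm e$ for an edge $e$; by the lemma $Jv_a^{\ast}$ is then $\pm v_c^{\ast}$ or $\pm e^{\ast}$, so the coefficient is $\pm\langle v_c^{\ast},v_b^{\ast}\rangle$ or $\pm\langle e^{\ast},v_b^{\ast}\rangle$. Here is where the hypothesis on $\pint$ is essential, and this is the step I expect to be the main obstacle: one has to use the orthogonality of $\{v_1,\dots,v_n\}$ to see that the vertex--vertex block of the Gram matrix of $V\cup E$ is diagonal, so that $v_b^{\ast}$ carries no vertex component along any $v_c$ with $\seg{v_cv_b}\in E$ and no spurious coupling to the edges, forcing these inner products to vanish; equivalently, working in the orthonormal basis built from $\{u_i\}$ and an orthonormal basis $\{w_k\}$ of $W_0^{\perp}$, one must check $\sum_k[w_k,Jw_k]=0$, which is transparent in the case $W_0\perp W_1$ (then $W_0^{\perp}=W_1$ is central) and requires care in general. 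Granting this, $S=0$ and $\pint$ is balanced.

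Finally, by Theorem \ref{thm: existe basico} the distinguished edges of $\G$ are exactly the edges of the associated basic subgraph $\G_b$, so ``$\G$ has no distinguished edges'' is the same as ``$\G_b$ is a union of copies of $\A_1$''; this gives the ``equivalently''. For the last statement, if $J$ were abelian then by Proposition \ref{prop: basic} the graph $\G$ would itself be basic with \emph{every} edge distinguished; combined with the equivalence just proved, the existence of a balanced metric $\pint$ with orthogonal vertices would then force $\G$ to have no edges at all, i.e.\ to be the empty graph. Hence whenever $\G$ is non-empty the adapted complex structure cannot be abelian.
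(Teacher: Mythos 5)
Your reduction of \eqref{eq: balanced} to the dual-basis identity $S=\sum_{x\in V\cup E}[x,Jx^{\ast}]$, the lemma that $J$ acts on the $\pint$-dual basis by the same signed involution by which it acts on $V\cup E$ (this indeed only needs $J^2=-\operatorname{Id}$ and the $\pint$-skewness of $J$), and the resulting formula that the coefficient of an edge $e_{a,b}$ in $S$ equals $2\langle Jv_a^{\ast},v_b^{\ast}\rangle$ are all correct. They yield a complete proof of ``distinguished edge $\Rightarrow$ not balanced'' (in fact without using the orthogonality of the vertices at all), and your deduction of the ``equivalently'' clause and of the non-abelianity statement from Theorem \ref{thm: existe basico} and Proposition \ref{prop: basic} is fine. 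Since the paper states the proposition without a written proof, essentially reading it off \eqref{eq: balanced} in a basis adapted to $V\cup E$, your dual-basis formalism is a legitimate substitute and, for this direction, even more general than needed.

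The genuine gap is the converse, and you flag it yourself (``granting this''). The mechanism you propose does not work: orthogonality of the vertices makes the vertex--vertex block of the Gram matrix $G$ of $V\cup E$ diagonal, but it does not make the vertex--vertex or vertex--edge blocks of $G^{-1}$ vanish; by the Schur complement, any coupling $G_{VE}\neq 0$ produces exactly the quantities $\langle v_c^{\ast},v_b^{\ast}\rangle$ and $\langle e^{\ast},v_b^{\ast}\rangle$ that you need to be zero. Worse, this step does not merely ``require care'': it can fail under the stated hypotheses. Take $C_4$ with the adapted structure used in the paper, $Jv_1=v_3$, $Jv_2=v_4$, $Je_{1,2}=e_{1,4}$, $Je_{2,3}=-e_{3,4}$ (no distinguished edges), and let $\pint$ be the inner product making $V\cup E$ orthonormal except for $\langle v_2,e_{1,2}\rangle=\langle v_4,e_{1,4}\rangle=s$ and $\langle v_3,e_{1,2}\rangle=-\langle v_1,e_{1,4}\rangle=r$. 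For $r^2+s^2<1$ this is positive definite and $J$-invariant, and the vertices are orthonormal; yet a direct computation (or your own coefficient formula, since $\langle v_3^{\ast},v_2^{\ast}\rangle=rs/(1-r^2-s^2)$) gives $\sum_i[f_i,Jf_i]=\tfrac{2rs}{1-r^2-s^2}\,(e_{1,2}+e_{3,4})\neq 0$, so this Hermitian metric is not balanced. Hence ``no distinguished edges $\Rightarrow$ balanced'' cannot be derived from vertex-orthogonality alone; it becomes immediate (by exactly your computation, or by evaluating \eqref{eq: balanced} in the orthonormal basis obtained by normalizing the vertices and completing inside $W_1$) once one additionally assumes the vertices orthogonal to the edges, i.e.\ $W_0\perp W_1$, which is evidently the situation the statement has in mind. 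As written, your converse half is not established, and no argument can close it without that stronger orthogonality assumption.
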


There are many graphs satisfying the conditions in Proposition \ref{prop: balanced}. Analyzing the ones already considered in this article, we obtain the following result.

\begin{corollary}
The following graphs give rise to 2-step nilpotent Lie algebras admitting an adapted complex structure and a compatible balanced Hermitian metric:
\begin{enumerate}
    \item[$\ri$] the complete bipartite graphs $K_{2n,2m}$ from Proposition \ref{prop: bipartite},
    \item[$\rii$] the graphs $F_n$ from Example \ref{ex: peine},
    \item[$\riii$] the graphs of girth $n$ given in Proposition \ref{prop:girth}(ii),
    \item[$\riv$] the blow-ups of any graph, according to Proposition \ref{prop: blow},
    \item[$\mathrm{(v)}$] a forest with an even number of connected components, where each connected component has exactly one even vertex.
\end{enumerate}
\end{corollary}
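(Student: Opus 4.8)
The plan is to derive the whole statement from Proposition~\ref{prop: balanced}. The key preliminary observation is that for \emph{any} graph $\G$ carrying an adapted complex structure $J$, the Lie algebra $\n_\G$ admits a canonical compatible Hermitian metric in which the vertices are orthogonal: simply declare the basis $V\cup E$ of $\n_\G$ to be orthonormal, obtaining a metric $\pint$. This $\pint$ is $J$-Hermitian because the adapted condition gives $Jx\in\pm(V\cup E)$ for every $x\in V\cup E$, so $J$ acts on $V\cup E$ as a signed permutation; since $J$ is invertible and no element of $V\cup E$ is the negative of another (they are distinct basis vectors), the Gram matrix is preserved, i.e. $\pint(Jx,Jy)=\pint(x,y)$ on basis elements and hence on all of $\n_\G$. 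In particular $V$ is an orthonormal, hence orthogonal, set, so Proposition~\ref{prop: balanced} applies and tells us that $\pint$ is balanced if and only if $\G$ has no distinguished edges, equivalently the associated basic subgraph $\G_b$ is a union of copies of $\A_1$. Thus the corollary reduces to checking, for each of the five families, that the adapted complex structure exhibited earlier has no distinguished edges.

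The five cases would then be dispatched by quoting the relevant earlier results. For $\ri$ and $\rii$ this is already recorded in Example~\ref{ex: A1}: the complex structures on $K_{2n,2m}$ from Proposition~\ref{prop: bipartite} and on $F_n$ from Example~\ref{ex: peine} have no distinguished edges. For $\riii$, the graphs of girth $n$ in Proposition~\ref{prop:girth}(ii) are constructed from a basic graph with $p_2=p_3=0$, so $\G_b$ is a union of copies of $\A_1$. For $\riv$, the last assertion of Proposition~\ref{prop: blow} states precisely that the adapted complex structure on the $\mathbf{k}$-blow-up has basic subgraph a union of copies of $\A_1$. For $\mathrm{(v)}$, Theorem~\ref{thm:tree} guarantees that the forest admits an adapted complex structure; grouping its (even number of) connected components arbitrarily into pairs, every pair falls into case (a) of the proof of Theorem~\ref{thm:tree}, whose associated basic subgraph is a union of copies of $\A_1$, and therefore so is the basic subgraph of the whole forest.

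The argument carries no serious obstacle: its substance is the uniform reduction via the canonical metric above, after which each of the five items is a citation. The only place meriting a line of care is $\mathrm{(v)}$, since Remark~\ref{rem:2trees} only records the shape of $\G_b$ for a forest with exactly two components; one must observe that the pairing argument in the proof of Theorem~\ref{thm:tree} applies verbatim to an arbitrary even number of components, each with exactly one even vertex, keeping $\G_b$ of type $(p_1,0,0)$. With that remark in place, a single application of Proposition~\ref{prop: balanced} to $\pint$ settles all five cases simultaneously.
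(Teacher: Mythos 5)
Your proposal is correct and follows essentially the same route the paper intends: the corollary is an application of Proposition~\ref{prop: balanced}, reduced to checking that each listed family carries an adapted complex structure whose basic subgraph consists only of copies of $\A_1$ (no distinguished edges), which you verify by the same citations the paper relies on. Your explicit check that the metric making $V\cup E$ orthonormal is $J$-Hermitian (since an adapted $J$ acts as a signed permutation of this basis), and your observation that the pairing argument of Theorem~\ref{thm:tree} extends beyond two components, are correct details that the paper leaves implicit.
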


\subsection{SKT metrics compatible with adapted complex structures}

We recall some facts about SKT metrics on nilpotent Lie algebras. We point out that if a nilpotent Lie algebra admits an SKT structure then the Lie algebra is $2$-step nilpotent, according to \cite{AN}. 

Other useful results are summarized in the following lemma.

\begin{lemma}\label{lem: SKT}
Let $(J,\pint)$ be an SKT Hermitian structure on the 2-step nilpotent Lie algebra $\n$. Then:
\begin{enumerate}
    \item[$\ri$] the center $\z$ of $\n$ is $J$-invariant,
    \item[$\rii$] $x\in \z$ if and only if $[x, Jx]=0$.
\end{enumerate}
\end{lemma}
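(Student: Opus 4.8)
The plan is to treat statement (i) as known — the $J$-invariance of the centre of a $2$-step nilpotent Lie algebra carrying an SKT structure is part of the folklore of SKT geometry and is contained in \cite{AN} — and to derive (ii) from the closedness of the Bismut torsion form. The forward implication of (ii) is trivial and uses nothing: if $x\in\z$ then $[x,y]=0$ for all $y\in\n$, in particular $[x,Jx]=0$. For the converse I would first invoke (i): since $\z$ is $J$-invariant, so is $\v:=\z^{\perp}$, and writing $x=a+b$ with $a\in\v$ and $b\in\z$, the centrality of $b$ and of $Jb$ gives $[x,Jx]=[a,Ja]$; moreover $x\in\z$ is equivalent to $a=0$. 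Hence everything reduces to the claim: if $a\in\v$ satisfies $[a,Ja]=0$, then $a=0$.

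The heart of the argument is a single instance of the SKT identity $dc=0$, where $c$ is the torsion $3$-form of the Bismut connection $\nabla^b$ associated with $(J,\pint)$. Starting from $\omega(\cdot,\cdot)=\pint(J\cdot,\cdot)$ and the Chevalley--Eilenberg differential, and using that $J$ is $\pint$-orthogonal, that $\n$ is $2$-step nilpotent (so $[\n,\n]\subset\z$), and the $J$-invariance of $\z$ from (i), one computes
\begin{align*}
dc(X,Y,Z,W) &= -\la[JZ,JW],[X,Y]\ra + \la[JY,JW],[X,Z]\ra - \la[JY,JZ],[X,W]\ra \\
&\quad - \la[JX,JW],[Y,Z]\ra + \la[JX,JZ],[Y,W]\ra - \la[JX,JY],[Z,W]\ra
\end{align*}
for all $X,Y,Z,W\in\n$. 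I would then evaluate this on $X=a$, $Y=Ja$, $Z=b$, $W=Jb$ with $b\in\v$ arbitrary; using $J^{2}=-\operatorname{Id}$, the two terms carrying the factor $[a,Ja]$ vanish by hypothesis, and $dc=0$ collapses to
\begin{equation*}
0 = \la[a,b],[a,b]\ra + \la[a,Jb],[a,Jb]\ra + \la[Ja,b],[Ja,b]\ra + \la[Ja,Jb],[Ja,Jb]\ra .
\end{equation*}
Positive-definiteness of $\pint$ forces $[a,b]=0$. Since $b\in\v$ was arbitrary and $[a,\z]=0$ because $\z$ is central, we obtain $[a,\n]=0$, i.e.\ $a\in\z$; but $a\in\v=\z^{\perp}$, so $a=0$, whence $x=b\in\z$, as desired.

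The main obstacle I foresee is purely bookkeeping: pinning down the signs in the passage from $\omega$ to $c$ and then to $dc$ so that the displayed four-vector identity is correct. The overall sign is irrelevant for $dc=0$, but the relative signs matter, since the argument hinges on the four surviving brackets appearing with the \emph{same} sign after the $[a,Ja]$-terms are removed. I would also flag that one cannot bootstrap (i) from the very same formula, because the simplification $c([X,Y],Z,W)=\la[JZ,JW],[X,Y]\ra$ already used $J[\n,\n]\subset\z$; a self-contained proof of (i) requires a separate $dc=0$ (equivalently $\partial\bar\partial\omega=0$) computation in which $J$-invariance of the centre is not assumed, so it is cleanest to quote (i) from \cite{AN}.
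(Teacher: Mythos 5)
Your proposal is correct, but it takes a different route from the paper: the paper does not prove either statement itself, it simply cites the literature ((i) is quoted from \cite[Proposition 3.1]{EFV} and (ii) from \cite[Lemma 3.4]{AN}), whereas you give a self-contained derivation of (ii) from $dc=0$, keeping only (i) as a citation. Your computation is sound: with $[\n,\n]\subset\z$ and $J\z=\z$ one indeed gets $c([X,Y],Z,W)=-\la [JZ,JW],[X,Y]\ra$, the Chevalley--Eilenberg formula then yields your six-term expression with the correct relative signs, and evaluating on $(a,Ja,b,Jb)$ gives
\[ 0=dc(a,Ja,b,Jb)=\pm\Bigl(2\la [a,Ja],[b,Jb]\ra-\|[a,b]\|^2-\|[a,Jb]\|^2-\|[Ja,b]\|^2-\|[Ja,Jb]\|^2\Bigr), \]
so that $[a,Ja]=0$ forces $[a,b]=0$ for all $b\in\v$ and hence $a=0$; the reduction $[x,Jx]=[a,Ja]$ via the $J$-invariant splitting $\n=\v\oplus\z$ is also fine. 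In effect you have reproved the identity that the paper itself later imports as equation \eqref{eq: romi y marina} (attributed to the proof of \cite[Lemma 3.4]{AN}) and used it in the special case $[a,Ja]=0$, so your argument is essentially the one in the cited reference, written out in full; what it buys is a self-contained proof of (ii) inside the paper, at the cost of the sign bookkeeping you flag. Your caveat that (i) cannot be bootstrapped from the same simplified formula is well taken (the paper cites \cite{EFV} for (i), not \cite{AN}, a harmless discrepancy in attribution).
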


\begin{proof}
    $\ri$ is proved in \cite[Proposition 3.1]{EFV}, whereas $\rii$ is proved in \cite[Lemma 3.4]{AN}.
\end{proof}

 \medskip

In our case of interest, that is, when the 2-step nilpotent Lie algebra arises from a graph and the complex structure is adapted, Lemma \ref{lem: SKT} translates as follows: 

\begin{lemma}\label{lem: SKT-adapted}
If the Lie algebra $\n_\G$ associated to the graph $\G$ admits an SKT Hermitian structure $(J,\pint)$ with adapted complex structure $J$ and $x$ is a non-isolated vertex of $\G$ then:
\begin{enumerate}
    \item[$\ri$] the image $Jx$ of $x$ is, up to sign, another non-isolated vertex of $\G$,
    \item[$\rii$] there is an edge between $x$ and $Jx$, which is therefore a distinguished edge.
\end{enumerate}
\end{lemma}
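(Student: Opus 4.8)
The statement to prove is Lemma \ref{lem: SKT-adapted}: if $\n_\G$ admits an SKT structure $(J,\pint)$ with $J$ adapted and $x$ is a non-isolated vertex, then (i) $Jx$ is (up to sign) another non-isolated vertex, and (ii) there is a distinguished edge joining $x$ and $Jx$. The plan is to deduce both assertions directly from Lemma \ref{lem: SKT}, translating its two parts into the language of the graph, and using the structural results from Section \ref{sec:integrability}.

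First I would prove (i) by contradiction together with part (i) of Lemma \ref{lem: SKT}. Since $x$ is a non-isolated vertex, it is not central in $\n_\G$ (recall the center of $\n_\G$ is spanned by the edges together with the isolated vertices). Suppose $Jx$ were, up to sign, either an edge or an isolated vertex; in either case $Jx$ lies in the center $\z$ of $\n_\G$. But by Lemma \ref{lem: SKT}(i) the center $\z$ is $J$-invariant, so $x = -J(Jx) \in \z$, contradicting the fact that $x$ is non-isolated. Hence $Jx$ must be, up to sign, a vertex; and since $\deg(x)=\deg(Jx)$ by Corollary \ref{cor: same parity}, $Jx$ is in fact a non-isolated vertex. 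This settles (i).

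For (ii) I would invoke part (ii) of Lemma \ref{lem: SKT}: $y \in \z$ if and only if $[y,Jy]=0$. Apply this with $y = x$. Since $x$ is a non-isolated vertex, $x \notin \z$, so $[x,Jx] \neq 0$. By (i), $Jx = \pm w$ for some vertex $w$, and $[x,Jx] = \pm[x,w] \neq 0$ means precisely that $x$ and $w$ are joined by an edge in $\G$. This edge connects a pair of vertices $x$ and $w$ with $w = \pm Jx$, i.e. it is a distinguished edge by definition. This completes (ii).

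I do not anticipate a serious obstacle here: the lemma is essentially a dictionary translation of Lemma \ref{lem: SKT} into graph terms, and the only mild subtlety is the bookkeeping of signs (writing $Jx = \pm w$ rather than $Jx = w$), which is harmless since $[x, -w] = -[x,w]$ and the distinguished-edge condition is insensitive to sign. One should also remark, for cleanliness, that the two parts of Lemma \ref{lem: SKT} are exactly what is needed and no further appeal to the expansion machinery of Section \ref{sec:construction} is required; the only earlier result used beyond Lemma \ref{lem: SKT} is the identification of the center of $\n_\G$ from Section \ref{prelim} (and, optionally, Corollary \ref{cor: same parity} to upgrade ``vertex'' to ``non-isolated vertex'' in (i), though this also follows a posteriori from (ii)).
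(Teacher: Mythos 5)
Your proposal is correct and takes essentially the same route the paper intends: the paper states Lemma \ref{lem: SKT-adapted} as a direct translation of Lemma \ref{lem: SKT} (no separate proof is written), with the $J$-invariance of the center giving (i) and the criterion $x\in\z \Leftrightarrow [x,Jx]=0$ giving (ii). One minor point: Corollary \ref{cor: same parity} only asserts that $\deg(x)$ and $\deg(Jx)$ have the same parity (not that they are equal), so the upgrade of ``vertex'' to ``non-isolated vertex'' in (i) should indeed rest on the edge produced in (ii), exactly as you note.
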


We point out that the Bismut torsion 3-form on $(\n_\G, J, \pint)$ can be computed using the following expression (see, for instance, \cite[equation (3.2)]{EFV}), which holds indeed for general Hermitian Lie algebras:
\[ c(x,y,z) = -\langle [Jx , Jy], z\rangle - \langle [Jy, Jz],x \rangle -\langle [Jz, Jx],y \rangle,  \quad x,y,z\in \n_\G.\]
From now on, for simplicity, we assume that \textit{the subspace $\mathfrak{v}$ generated by the non-isolated vertices is the orthogonal complement of the center $\z$ of $\n_\G$}. That is, 
\begin{equation}\label{eq: complement}
    \z^\perp=\mathfrak{v}:=\text{span}\{v\in V\mid \deg(v)>0\}.
\end{equation}
To ensure that $c$ returns non-zero values, two of the arguments must be non-isolated vertices and the third one must be an edge. Assuming that $x,y$ are vertices and $z$ is an edge, we have
\begin{equation}\label{eq: 3form c}
c(x,y,z)=-\langle [Jx,Jy], z\rangle, \qquad x,y\in V, \, z\in E. 
\end{equation} 

Next, we will look for examples of graphs $\G$ with an adapted complex structure $J$ that admits an SKT metric. The graphs we are looking for have to satisfy then the conditions in Lemma \ref{lem: SKT-adapted}.
We begin with low-dimensional examples.

\begin{example}\label{ex: skt dim4}
In dimension 4, there is only one 2-step nilpotent Lie algebra: it is the direct product of the 3-dimensional Heisenberg Lie algebra and $\R$, and it arises from the graph $\A_2$ in Example \ref{ex: first examples}. Recall that there is an adapted complex structure on $\h_3\times\R$ given by $Jv_1=v_2, \, Jv_3=e_{1,2}$. If we consider on $\h_3\times \R$ the inner product $\pint$ making the basis $\{v_1,v_2,v_3,e_{1,2}\}$ orthonormal, then $(J,\pint)$ is a Hermitian structure on $\h_3\times \R$ whose corresponding Bismut torsion 3-form is, according to \eqref{eq: 3form c}, 
\[ c= -v^1\wedge v^2\wedge e^{1,2},\]
where $\{v^1,v^2,v^3,e^{1,2}\}$ denotes the dual basis of $1$-forms. Since $dv^i=0$ for $i=1,2,3$ and $de^{1,2}=-v^1\wedge v^2$, we have $dc=0$ and therefore $(\h_3\times\R,J,\pint)$ is SKT. This fact is already well known (see, for instance, \cite[Theorem 5.1]{MS}).
\end{example}

\begin{example}\label{ex: skt dim6}
The classification of 6-dimensional nilpotent Lie algebras admitting SKT structures was obtained in \cite[Theorem 3.2]{FPS}: there are only 4 (excluding the abelian one), from which only two arise from graphs. Indeed, they are $\n_1:=\h_3\times \R^3$ and $\n_2:=\h_3\times \h_3$. The first arises from the graph $\A_2$ in Example \ref{ex: first examples} adding two more isolated vertices, and it follows from Example \ref{ex: skt dim4} that it admits a product SKT structure. 

As for $\n_2$, it is isomorphic to $\h_3\times \h_3$ and it arises from the graph $\A_3$ in Example \ref{ex: first examples}. 
Recall that there is an adapted complex structure on $\n_2$ given by $Jv_1=v_2, \, Jv_3=v_4, \, Je_{1,2}=e_{3,4}$. If we consider on $\n_2$ the inner product $\pint$ making the basis $\{v_1,v_2,v_3,v_4,e_{1,2},e_{3,4}\}$ orthonormal, then $(J,\pint)$ is a Hermitian structure on $\n_2$ whose corresponding Bismut torsion 3-form is, according to \eqref{eq: 3form c}, 
\[ c= -v^1\wedge v^2\wedge e^{1,2}-v^3\wedge v^4\wedge e^{3,4}.\]
Since $dv^i=0$ for $i=1,2,3,4$, $de^{1,2}=-v^1\wedge v^2$ and $de^{3,4}=-v^3\wedge v^4$, we have $dc=0$ and therefore $(\h_3\times\h_3,J,\pint)$ is SKT. 
\end{example}

\smallskip

In Examples \ref{ex: skt dim4} and \ref{ex: skt dim6} we obtained examples of SKT structures on 2-step nilpotent Lie algebras arising from graphs; moreover, all the examples arise from basic graphs. We will show that this is always the case when we assume, in addition, that the distinguished basis $V\cup E$ of $\n_\G$ is orthonormal (in fact, a milder condition will suffice). 

\begin{proposition}\label{prop: E orthogonal}
Let $\G=\G(V,E)$ be a graph with an adapted complex structure $J$. If $(\n_\G, J)$ admits an SKT metric $\pint$ such that $E$ is an orthogonal set then $\G$ is basic.
\end{proposition}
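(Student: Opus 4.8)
The plan is to argue by contradiction, playing the SKT condition $dc=0$ (with $c$ the Bismut torsion $3$-form, which on vertices and an edge is given by \eqref{eq: 3form c}) against the presence of a non-distinguished edge. So suppose $\G$ is not basic. By Proposition~\ref{prop: basic} there is a non-distinguished edge, and by Proposition~\ref{prop: endpoint} it lies in a complex wedge $(u;v,w)$ with $w=Jv$; thus $\seg{uv}$ and $\seg{uw}=\pm J\seg{uv}$ are both edges of $\G$ and $[u,v]+J[u,w]=0$. Since $u,v$ are non-isolated and the metric is SKT, Lemma~\ref{lem: SKT-adapted} provides a non-isolated vertex $u'$ with $Ju=\epsilon u'$ for some $\epsilon\in\{\pm1\}$, such that $\seg{uu'}$ is a distinguished edge. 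A short argument then shows that $u,v,u',w$ are four distinct vertices: if $u'$ equalled $v$ or $w$, then $\seg{uv}$ or $\seg{uw}$ would be an edge joining $u$ to $\pm Ju$, hence distinguished, contradicting that these two edges of the complex wedge are non-distinguished (for $\seg{uw}$ one also uses that $\seg{uw}$ distinguished would force $w=Jv=\pm Ju$, i.e.\ $v=\pm u$); and similarly $\seg{uv},\seg{uw},\seg{vw},\seg{uu'}$ are pairwise distinct edges.

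Next I would record the elementary reduction that, since $\n_\G$ is $2$-step nilpotent and its center $\z$ is $J$-invariant (Lemma~\ref{lem: SKT}(i)), one has $c(\zeta,x,y)=-\langle[Jx,Jy],\zeta\rangle$ for every $\zeta\in\z$ and all $x,y$: the remaining two terms in the definition of $c$ vanish because $J\zeta\in\z$ is central. Feeding this into the Chevalley--Eilenberg formula for $dc$ (every bracket $[x_i,x_j]$ being central) and evaluating on the quadruple $(u,v,u',w)$ — using $Ju=\epsilon u'$, $Jv=w$, so $Ju'=-\epsilon u$, $Jw=-v$ — the six terms collapse as follows: four of them are $\epsilon\langle[x,y],[x,y]\rangle$ with $\{x,y\}$ ranging over $\{u,v\},\{u,w\},\{v,u'\},\{u',w\}$, all with the common sign $\epsilon$; the two remaining terms equal $\pm\langle[v,w],[u,u']\rangle=\pm\langle\seg{vw},\seg{uu'}\rangle$, which vanishes because $\seg{vw}\neq\seg{uu'}$ and $E$ is an orthogonal set. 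Hence
\[
dc(u,v,u',w)=\epsilon\big(\langle[u,v],[u,v]\rangle+\langle[u,w],[u,w]\rangle+\langle[v,u'],[v,u']\rangle+\langle[u',w],[u',w]\rangle\big).
\]

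To finish, I would note that $[u,v]$ and $[u,w]$ are, up to sign, the basis edges $\seg{uv}$ and $\seg{uw}$, hence nonzero, so by positive-definiteness the first two summands are strictly positive while the other two are non-negative; therefore $dc(u,v,u',w)\neq0$, contradicting $dc=0$. This forces $\G$ to be basic. I would also remark that the choice of quadruple $(u,v,Ju,Jv)$ is essentially forced: it is the only way to produce a term of $dc$ that pairs the non-distinguished edge $\seg{uv}$ with itself.

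The only delicate point is the bookkeeping in the second step: one must carry the signs $\epsilon$ through all six Chevalley--Eilenberg terms and verify that the four "diagonal" contributions genuinely share the common sign $\epsilon$, while the two "cross" contributions are precisely the inner products killed by orthogonality of $E$ (this is exactly where the hypothesis on the metric is used, and nowhere else). The conceptual steps — locating the complex wedge, invoking Lemma~\ref{lem: SKT-adapted} and Lemma~\ref{lem: SKT}, and checking that $u,v,Ju,Jv$ are four distinct vertices joined by pairwise distinct edges — are short and rely only on results already established.
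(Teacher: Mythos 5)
Your proof is correct and follows essentially the same route as the paper: expanding $dc(u,v,Ju,Jv)=0$ via the Chevalley--Eilenberg formula yields exactly the identity \eqref{eq: romi y marina} that the paper quotes from \cite[Lemma 3.4]{AN}, and in both arguments the orthogonality of the two (distinguished, distinct) edges $\seg{v\,Jv}$, $\seg{u\,Ju}$ kills the cross term while positive-definiteness forces the mixed brackets--in particular the non-distinguished edge $[u,v]$--to vanish. The only difference is presentational: you rederive the identity and argue by contradiction through a specific complex wedge, whereas the paper cites it and applies it directly to any pair of non-isolated vertices $v,w$ with $w\neq\pm Jv$, concluding via Proposition \ref{prop: basic}.
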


\begin{proof}
Let $v,w$ be non-isolated vertices of $\G$, with $w\neq \pm Jv$. Since the Hermitian structure is SKT we have $dc=0$, where $c$ the Bismut torsion 3-form. In the proof of \cite[Lemma 3.4]{AN} it was shown that $dc=0$ together with $[\n_\G,\n_\G]\subseteq \z$ and the $J$-invariance of $\z$ imply the following formula:
\begin{equation}\label{eq: romi y marina} 
2\langle [v,Jv],[w,Jw]\rangle =\|[v,w]\|^2+\|[v,Jw]\|^2+\|[Jv,w]\|^2+\|[Jv,Jw]\|^2. 
\end{equation}
Since both $[v,Jv]$ and $[w,Jw]$ are, up to sign, edges of $\G$ we see that the left-hand side of \eqref{eq: romi y marina} vanishes. Thus, all the terms on the right-hand side also vanish, and in particular we obtain $[v,w]=0$. That is, there are no edges connecting $v$ and $w$; hence, all edges are distinguished. It follows from Proposition \ref{prop: basic} that $\G$ is basic.
\end{proof}

\smallskip

Thus, in our search for SKT structures on 2-step nilpotent Lie algebras arising from graphs other than the basic ones, we must require that the set $E$ of edges not be orthogonal. Moreover, the next result follows from the proof of Proposition \ref{prop: E orthogonal}:

\begin{corollary}\label{cor: SKT}
Let $\G=\G(V,E)$ be a graph with an adapted complex structure $J$ such that $(\n_\G, J)$ admits an SKT metric. The following statements hold:
\begin{enumerate}
    \item[$\ri$] if there is a non-distinguished edge joining the vertices $v$ and $w$ then the distinguished edges $\overline{vJv}$ and $\overline{wJw}$ are not orthogonal;
    \item[$\rii$] if $a$ and $b$ are distinguished edges with $b=Ja$ then there is no edge joining an endpoint of $a$ with an endpoint of $b$.
\end{enumerate}
\end{corollary}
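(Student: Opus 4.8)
The plan is to read off both statements directly from the key identity \eqref{eq: romi y marina} established in the proof of Proposition \ref{prop: E orthogonal}, applied to suitable pairs of vertices. Recall that \eqref{eq: romi y marina} says $2\langle [v,Jv],[w,Jw]\rangle = \|[v,w]\|^2+\|[v,Jw]\|^2+\|[Jv,w]\|^2+\|[Jv,Jw]\|^2$ for any pair of non-isolated vertices $v,w$ with $w\neq\pm Jv$, and it holds whenever $(\n_\G,J)$ is SKT, with no orthogonality hypothesis on $E$. So the corollary is essentially a matter of choosing the right $v$ and $w$ and tracking which terms on the right-hand side are forced to be nonzero.

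For part $\ri$, suppose $\seg{vw}$ is a non-distinguished edge, so $w\neq\pm Jv$ and both $v,w$ are non-isolated; moreover $[v,w]\neq 0$. By Lemma \ref{lem: SKT-adapted}, since the metric is SKT, $Jv$ and $Jw$ are (up to sign) non-isolated vertices and the edges $a:=\seg{vJv}$ and $b:=\seg{wJw}$ are distinguished, hence $[v,Jv]=\pm a\neq 0$ and $[w,Jw]=\pm b\neq 0$. Apply \eqref{eq: romi y marina} to this pair $v,w$: the right-hand side is at least $\|[v,w]\|^2>0$, so the left-hand side $2\langle[v,Jv],[w,Jw]\rangle$ is nonzero, i.e.\ $\langle a,b\rangle\neq 0$, which is exactly the assertion that the distinguished edges $\seg{vJv}$ and $\seg{wJw}$ are not orthogonal.

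For part $\rii$, let $a,b$ be distinguished edges with $b=Ja$, say $a=\seg{vJv}=\pm[v,Jv]$ and $b=\seg{w\,Jw}=\pm[w,Jw]$ with $w=Jv$ impossible (otherwise $a$ and $b$ would be adjacent, contradicting that distinguished edges are never adjacent, as noted after the definition of adapted complex structure); so $v,Jv$ are the endpoints of $a$ and $w,Jw$ are the endpoints of $b$, and $\{v,Jv\}\cap\{w,Jw\}=\emptyset$. Suppose, for contradiction, that there is an edge joining an endpoint of $a$ with an endpoint of $b$; after replacing $v$ by $Jv$ and/or $w$ by $Jw$ if necessary (this is harmless since $\{v,Jv\}$ and $\{w,Jw\}$ are each $J$-orbits and \eqref{eq: romi y marina} is symmetric in these replacements by \eqref{eq: simetrias}), we may assume $[v,w]\neq 0$. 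Then $v,w$ are non-isolated with $w\neq\pm Jv$, so \eqref{eq: romi y marina} applies and gives $2\langle[v,Jv],[w,Jw]\rangle\geq\|[v,w]\|^2>0$; in particular $[v,Jv]$ and $[w,Jw]$ are nonzero and non-orthogonal, so in particular $a\neq\pm b$ — fine. The contradiction must instead come from the relation $b=Ja$: since $J$ is an isometry, $\langle a,b\rangle=\langle a,Ja\rangle=0$ because $J$ is $\pint$-skew-symmetric, contradicting $\langle[v,Jv],[w,Jw]\rangle\neq 0$. Hence no such edge exists.

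The only real subtlety — and the step I would be most careful with — is the sign/endpoint bookkeeping in part $\rii$: one must check that the replacements $v\leftrightarrow Jv$, $w\leftrightarrow Jw$ genuinely let us arrange $[v,w]\neq 0$ from the hypothesis ``some endpoint of $a$ is joined to some endpoint of $b$,'' and that \eqref{eq: romi y marina} is invariant under these replacements. The latter is immediate from \eqref{eq: simetrias} (which gives $N_J(Jx,Jy)=N_J(x,y)$, and the identity \eqref{eq: romi y marina} is symmetric in $v\mapsto Jv$ since $[Jv,J(Jv)]=-[Jv,v]=[v,Jv]$, and the four norms on the right are permuted among themselves). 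The former is a finite case check over which of the (at most four) pairs of endpoints carries the edge. Everything else is a direct quotation of \eqref{eq: romi y marina}, Lemma \ref{lem: SKT-adapted}, and the fact that $J$ is $\pint$-skew-symmetric.
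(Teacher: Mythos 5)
Your proof is correct and takes essentially the same route as the paper, which states this corollary as a direct consequence of identity \eqref{eq: romi y marina} from the proof of Proposition \ref{prop: E orthogonal}: part $\ri$ is the identity plus Lemma \ref{lem: SKT-adapted}, and part $\rii$ is the identity plus $\langle a,Ja\rangle=0$ from the skew-symmetry of $J$. The only cosmetic remark is that the relabeling step in $\rii$ is unnecessary, since the right-hand side of \eqref{eq: romi y marina} already contains all four possible brackets between an endpoint of $a$ and an endpoint of $b$.
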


Taking into account Lemma \ref{lem: SKT-adapted}, Proposition \ref{prop: E orthogonal} and Corollary \ref{cor: SKT}, we have obtained the following example.

\begin{example}\label{ex:SKT}
Let $\B_n$ a basic graph consisting of one copy of $\A_2$ and $n$ copies of $\A_3$, that is, $\B_n=(0,1,n)$. For $1\leq j\leq n$, we label the vertices and edges of the $j$-th $\A_3$ as follows: the edges are $a_j$ and $b_j$; the edge $a_j$ has vertices $x_j$ and $y_j$, while the edge $b_j$ has vertices $z_j$ and $w_j$. Regarding $\A_2$, the vertices are denoted $v_0, v_1, v_2$, with $v_0$ the isolated vertex, and the edge $\overline{v_1v_2}$ is denoted $t_0$. 

Moreover, we consider on $\B_n$ the following adapted complex structure $J$: 
\[Ja_j=b_j, \quad Jx_j=y_j, \quad Jz_j=w_j, \quad Jv_0=t_0, \quad Jv_1=v_2. \] 
We expand $\B_n$ by adding the edges $f_j:=\overline{v_1x_j}$, $g_j=\overline{v_1y_j}$, $h_j:=\overline{v_2z_j}$ and $k_j:=\overline{v_2w_j}$. The resulting expanded graph $\widetilde{\B_n}$ will be denoted $\G_n$, and the Lie bracket on $\n_{\G_n}$ is given by, for $1\leq j \leq n$:
\begin{gather}\label{eq: corchetes SKT}
[v_1,v_2]=t_0, \quad  [x_j,y_j]=a_j, \quad [z_j,w_j]=b_j, \\ [v_1,x_j]=f_j, \quad [v_1,y_j]=g_j, \quad [v_2,z_j]=h_j, \quad [v_2,w_j]=k_j.\nonumber
\end{gather} 
Note that $\dim \n_{\G_n}=10n+4$, with $\dim \z(\n_{\G_n})=|E\cup \{v_0\}|=6n+2$. If we extend $J$ by setting \[Jf_j=g_j, \quad Jh_j=k_j,\]  
it follows from Theorem \ref{thm: expanded graph} (after an appropriate enumeration of the vertices) that $J$ is an adapted complex structure on $\G_n$. The distinguished edges in $\G_n$ are $\{t_0,a_1,\ldots,a_n,b_1,\ldots, b_n\}$. See Figure \ref{fig: mariposa-n} below for $n=3$ (only the vertices have been labeled). 
\begin{figure}[h]
    \centering
    \begin{tikzpicture}\footnotesize
\node (1) at (-3,3) [draw,circle,inner sep=1pt,fill=black,label=180:{$x_1$}] {};
\node (2) at (-3,2)  [draw,circle,inner sep=1pt,fill=black,label=180:{$y_1$}] {};
\node (3) at (-3,1) [draw,circle,inner sep=1pt,fill=black,label=180:{$x_2$}] {};
\node (4) at (-3,0) [draw,circle,inner sep=1pt,fill=black,label=180:{$y_2$}] {};
\node (5) at (-3,-1) [draw,circle,inner sep=1pt,fill=black,label=180:{$x_3$}] {};
\node (6) at (-3,-2) [draw,circle,inner sep=1pt,fill=black,label=180:{$y_3$}] {};
\node (7) at (3,3) [draw,circle,inner sep=1pt,fill=black,label=0:{$z_1$}] {};
\node (8) at (3,2) [draw,circle,inner sep=1pt,fill=black,label=0:{$w_1$}] {};
\node (9) at (3,1) [draw,circle,inner sep=1pt,fill=black,label=0:{$z_2$}] {};
\node (10) at (3,0) [draw,circle,inner sep=1pt,fill=black,label=0:{$w_2$}] {};
\node (11) at (3,-1) [draw,circle,inner sep=1pt,fill=black,label=0:{$z_3$}] {};
\node (12) at (3,-2) [draw,circle,inner sep=1pt,fill=black,label=0:{$w_3$}] {};
\node (13) at (-1,0.5) [draw,circle,inner sep=1pt,fill=black,label=280:{$v_1$}] {};
\node (14) at (1,0.5) [draw,circle,inner sep=1pt,fill=black,label=260:{$v_2$}] {};
\node (15) at (0,1.5) [draw,circle,inner sep=1pt,fill=black,label=90:{$v_0$}] {};

\path
(13) edge [-, black] node[above=0] {} (1)
(13) edge [-, black] node[above=0] {} (2)
(13) edge [-, black] node[above=0] {} (3)
(13) edge [-, black] node[above=0] {} (4)
(13) edge [-, black] node[above=0] {} (5)
(13) edge [-, black] node[right=0] {} (6)
(13) edge [-, black] node[right=0] {} (14)
(1) edge [-, black] node[right=0] {} (2)
(3) edge [-, black] node[right=0] {} (4)
(5) edge [-, black] node[right=0] {} (6)
(7) edge [-, black] node[right=0] {} (8)
(9) edge [-, black] node[right=0] {} (10)
(11) edge [-, black] node[below=7] {} (12)
(14) edge [-, black] node[above=0] {} (7)
(14) edge [-, black] node[above=0] {} (8)
(14) edge [-, black] node[above=0] {} (9)
(14) edge [-, black] node[above=0] {} (10)
(14) edge [-, black] node[above=0] {} (11)
(14) edge [-, black] node[right=0] {} (12);
\end{tikzpicture}
    \caption{Graph $\G_3$}
    \label{fig: mariposa-n}
\end{figure}

Let us define now an inner product $\pint$ on $\n_{\G_n}$ satisfying the following conditions:
\begin{itemize}
\renewcommand{\labelitemi}{$\diamond$}
    \item $\{v_1,v_2\}\cup \{x_j, y_j, z_j, w_j, f_j, g_j, h_j, k_j\}_{1\leq j\leq n}$ is an orthonormal set,
    \smallskip
    \item $\{v_0,t_0\}\cup \{a_j, b_j\}_{1\leq j \leq n}$ is orthogonal to the set in the previous item, 
    \smallskip
    \item $\langle t_0,a_j\rangle = -\langle v_0,b_j\rangle=1$, $\langle t_0,b_j\rangle = \langle v_0,a_j\rangle=1$ for all $j$, and 
    \smallskip
    \item $\|t_0\|^2=\|v_0\|^2=\|a_j\|^2=\|b_j\|^2=k$ for certain $k>0$, for all $j$. 
\end{itemize}
To show that the inner product $\pint$ on $\n_{\G_n}$ is indeed positive definite we only need to prove that the restriction of $\pint$ to the subspace generated by $\{v_0,t_0\}\cup \{a_j, b_j\}_{1\leq j \leq n}$ is positive definite. This is the content of the next result.

\begin{lemma}
For $k>\sqrt{2n}$, the inner product defined above is positive definite on the subspace generated by $\mathcal{B}:= \{v_0,t_0\}\cup \{a_1, \ldots,a_n\}\cup \{b_1,\ldots,b_n\}$. 
\end{lemma}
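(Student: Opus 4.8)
The plan is to show that the Gram matrix of $\langle\cdot,\cdot\rangle$ on the span of $\mathcal B$ is positive definite, and to read off the threshold on $k$ directly from it. Order the basis as $(t_0,v_0,a_1,\dots,a_n,b_1,\dots,b_n)$. By the prescription defining $\langle\cdot,\cdot\rangle$, every inner product among elements of $\mathcal B$ that was not explicitly listed vanishes (in particular $\langle t_0,v_0\rangle=0$, and $\langle a_i,a_j\rangle=\langle b_i,b_j\rangle=\langle a_i,b_j\rangle=0$ whenever these are not prescribed), so the Gram matrix takes the block form
\[
G=\begin{pmatrix} kI_2 & B\\ B^{T} & kI_{2n}\end{pmatrix},\qquad
B=\begin{pmatrix}\mathbf 1^{T} & \mathbf 1^{T}\\ \mathbf 1^{T} & -\mathbf 1^{T}\end{pmatrix},
\]
with $\mathbf 1\in\R^{n}$ the all-ones column: the $t_0$-row of $B$ records $\langle t_0,a_j\rangle=\langle t_0,b_j\rangle=1$ and the $v_0$-row records $\langle v_0,a_j\rangle=1$, $\langle v_0,b_j\rangle=-1$.

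First I would apply the Schur complement criterion. The lower-right block $kI_{2n}$ is positive definite since $k>0$, hence $G$ is positive definite if and only if the Schur complement $S:=kI_2-\tfrac1k BB^{T}$ is positive definite. The one computation that matters is $BB^{T}=2nI_2$: each row of $B$ has squared norm $n+n=2n$, while the inner product of the two rows is $n-n=0$, the cancellation being produced precisely by the sign $\langle v_0,b_j\rangle=-1$. Therefore $S=(k-\tfrac{2n}{k})I_2$, which is positive definite exactly when $k^{2}>2n$, i.e. $k>\sqrt{2n}$; this proves the lemma.

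An equivalent, entirely elementary alternative is to evaluate $\langle\xi,\xi\rangle$ on a generic vector $\xi=\alpha t_0+\beta v_0+\sum_j\gamma_j a_j+\sum_j\delta_j b_j$. Writing $\Gamma=\sum_j\gamma_j$ and $\Delta=\sum_j\delta_j$ one obtains
\[
\langle\xi,\xi\rangle=k\alpha^{2}+k\beta^{2}+k\sum_j(\gamma_j^{2}+\delta_j^{2})+2\alpha(\Gamma+\Delta)+2\beta(\Gamma-\Delta);
\]
completing the square in $\alpha$ and $\beta$ and using $(\Gamma+\Delta)^{2}+(\Gamma-\Delta)^{2}=2\Gamma^{2}+2\Delta^{2}\le 2n\sum_j(\gamma_j^{2}+\delta_j^{2})$ (Cauchy–Schwarz) yields $\langle\xi,\xi\rangle\ge k\bigl(\alpha+\tfrac{\Gamma+\Delta}{k}\bigr)^{2}+k\bigl(\beta+\tfrac{\Gamma-\Delta}{k}\bigr)^{2}+\bigl(k-\tfrac{2n}{k}\bigr)\sum_j(\gamma_j^{2}+\delta_j^{2})$, which is $\ge 0$ for $k>\sqrt{2n}$ and vanishes only when $\xi=0$.

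I do not expect a genuine obstacle. The only points requiring care are assembling $G$ with the correct signs and verifying the off-diagonal identity $BB^{T}=2nI_2$; the alternating sign in the $v_0$-row is exactly what forces this cancellation, and it is also what makes $\sqrt{2n}$ — rather than some larger constant — the sharp bound in the statement.
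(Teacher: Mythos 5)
Your proof is correct, but it takes a different route from the paper. The paper verifies positive definiteness via Sylvester's criterion: it writes down the full Gram matrix $X_n$ and, after elementary row operations, computes every leading principal minor, obtaining $\det A_j=k^j(k^2-2j)$ for $1\le j\le n$ and $\det A_j=k^{j-2}(k^2-2n)\bigl(k^2-2(j-n)\bigr)$ for $n+1\le j\le 2n$, all positive exactly when $k^2>2n$. You instead exploit the block structure $G=\begin{pmatrix} kI_2 & B\\ B^{T} & kI_{2n}\end{pmatrix}$ and reduce everything to the Schur complement $kI_2-\tfrac1k BB^{T}$ together with the single identity $BB^{T}=2nI_2$ (the sign $\langle v_0,b_j\rangle=-1$ producing the off-diagonal cancellation); your second, elementary argument by completing the square in $\alpha,\beta$ and applying Cauchy--Schwarz to $\Gamma,\Delta$ is an independent confirmation. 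Both arguments are valid and both in fact yield the sharp ``if and only if'' threshold $k>\sqrt{2n}$; your Schur-complement computation is shorter and avoids the case split over the minors, whereas the paper's computation has the mild extra benefit of exhibiting the explicit determinant values. (One implicit point you rely on, consistent with the paper: all pairings among elements of $\mathcal B$ not listed in the definition of $\langle\cdot,\cdot\rangle$ are zero, e.g.\ $\langle t_0,v_0\rangle=0$ and $\langle a_i,a_j\rangle=0$ for $i\neq j$; this is exactly what the paper's matrix $X_n$ encodes, so your Gram matrix agrees with it up to permuting the first two basis vectors.)
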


\begin{proof}
The matrix of the inner product $\pint$ on the subspace generated by $\mathcal{B}$ is given by the $(2n+2)\times (2n+2)$ matrix
\[ X_n:=\left[
	\begin{array}{cc|ccc|ccc}       
		k & 0 & 1& \cdots&1 &-1 & \cdots & -1\\
		0 & k & 1& \cdots&1 &1 & \cdots & 1\\ \hline 
		1 & 1 & k& & & &  & \\
		\vdots & \vdots & & \ddots& & &  & \\
		1 & 1 & & &k & &  & \\ \hline
        -1 & 1 & & & &k &  & \\
		\vdots & \vdots & & & & & \ddots & \\
		-1 & 1 & & & & &  & k\\
	\end{array}
	\right],\]
where in each empty slot there is a $0$. We will show that this matrix is definite positive if and only if $k>\sqrt{2n}$. It is clear that the block $\begin{bmatrix} k & 0 \\ 0 & k\end{bmatrix}$ is definite positive since $k>0$. Next, let us denote by $A_j$ the submatrix of $X_n$ formed by its first $j+2$ columns and $j+2$ rows, with $1\leq j\leq 2n$. We must show that the determinant of $A_j$ is positive for all $j$; clearly, we have to consider separately the cases $1\leq j\leq n$ and $n+1\leq j\leq 2n$. 

For $1\leq j \leq n$, it is easy to verify that, after performing some elementary row operations, $A_j$ has the same determinant as the $(j+2)\times (j+2)$ matrix 
\[ \left[
	\begin{array}{cc|ccc}       
		k-\frac{j}{k} & -\frac{j}{k} &0& \cdots&0 \\
		-\frac{j}{k} & k -\frac{j}{k} & 0& \cdots&0 \\ \hline 
		1 & 1 & k& &  \\
		\vdots & \vdots & & \ddots&  \\
		1 & 1 & & &k 
	\end{array}
	\right], \]
and therefore 
\begin{equation}\label{eq:detAj-1} 
\det A_j=k^j(k^2-2j), \quad 1\leq j \leq n.
\end{equation}
For $n+1\leq j \leq 2n$, it is easy to verify that, after performing some elementary row operations, $A_j$ has the same determinant as the $(j+2)\times (j+2)$ matrix    
\[ \left[
	\begin{array}{cc|ccc|ccc}       
		k-\frac{j}{k} & -\frac{2n-j}{k} & 0& \cdots&0 &0 & \cdots & 0\\
		-\frac{2n-j}{k} & k -\frac{j}{k}& 0& \cdots&0 &0 & \cdots & 0\\ \hline 
		1 & 1 & k& & & &  & \\
		\vdots & \vdots & & \ddots& & &  & \\
		1 & 1 & & &k & &  & \\ \hline
        -1 & 1 & & & &k &  & \\
		\vdots & \vdots & & & & & \ddots & \\
		-1 & 1 & & & & &  & k\\
	\end{array}
	\right],\]
where the partition of $j+2$ (as rows and columns) is given by $2+n+(j-n)$. Therefore 
\begin{equation}\label{eq:detAj-2}  
\det A_j=k^{j-2}(k^2-2n)(k^2-2(j-n)), \quad n+1\leq j \leq 2n.
\end{equation}
It follows from \eqref{eq:detAj-1} and \eqref{eq:detAj-2} that $\det A_j>0$ if and only if $k^2>2n$. Since $k>0$, the proof is complete. 
\end{proof}

From now on, we always assume $k>\sqrt{2n}$. Then it is clear that $(J,\pint)$ is a Hermitian structure on $\n_{\G_n}$. Note that this Hermitian structure satisfies the conditions of Lemma \ref{lem: SKT-adapted} and Corollary \ref{cor: SKT}, so that $(J, \pint)$ is a candidate to be SKT. We show next that this is indeed the case.

\begin{proposition}
    The Hermitian structure $(J,\pint)$ with $k>\sqrt{2n}$ on $\n_{\G_n}$ is SKT.
\end{proposition}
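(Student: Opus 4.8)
The plan is to verify the SKT condition directly by showing that the Bismut torsion 3-form $c$ is closed, i.e. $dc=0$. First I would compute $c$ using the general formula $c(x,y,z) = -\langle[Jx,Jy],z\rangle - \langle[Jy,Jz],x\rangle - \langle[Jz,Jx],y\rangle$, taking advantage of condition \eqref{eq: complement} (that $\z^\perp = \mathfrak{v}$) so that only triples with two non-isolated vertices and one edge can contribute, reducing the computation to \eqref{eq: 3form c}: $c(x,y,z) = -\langle[Jx,Jy],z\rangle$ for $x,y \in V$, $z \in E$. Running through the brackets \eqref{eq: corchetes SKT} and the action of $J$ on vertices and edges, and carefully keeping track of the non-orthogonality of the edges $t_0, v_0, a_j, b_j$ (through the prescribed inner products $\langle t_0,a_j\rangle = -\langle v_0,b_j\rangle = \langle t_0,b_j\rangle = \langle v_0,a_j\rangle = 1$), I expect $c$ to be expressible as an explicit sum of decomposable 3-forms involving the dual basis $1$-forms. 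The likely shape is something like $c = -v^1\wedge v^2\wedge \tau_0 - \sum_j (f^j\wedge g^j + h^j\wedge k^j)\wedge(\cdots) + \sum_j(x^j\wedge y^j\wedge \alpha_j + z^j\wedge w^j\wedge \beta_j)$, where $\tau_0,\alpha_j,\beta_j$ are the $1$-forms dual (via $\pint$) to $t_0, a_j, b_j$ respectively; the precise coefficients come from inverting the Gram matrix $X_n$ restricted to $\{v_0,t_0,a_j,b_j\}$.

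Next I would compute $dc$. Since $\n_{\G_n}$ is $2$-step nilpotent, the differentials of the $1$-forms dual to the center vanish on $\mathfrak v$ only in a controlled way: for a $1$-form $\xi$, $d\xi(u,w) = -\xi([u,w])$. The $1$-forms dual to elements of $\mathfrak v$ (the non-isolated vertices) are closed, because $[\mathfrak v,\mathfrak v]\subseteq \z$ and $[\z,\n_{\G_n}]=0$; so all the $v^i, x^j, y^j, z^j, w^j$-type forms are closed. The only forms with nonzero differential are those dual to edges, and their differentials are pulled back from $\alt^2\mathfrak v^*$ via \eqref{eq: corchetes SKT}. So computing $dc$ amounts to: (i) differentiating each edge-form appearing in $c$ using the bracket relations, and (ii) checking that the resulting $4$-forms cancel in pairs. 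I expect the cancellation to mirror the structure already seen in Examples \ref{ex: skt dim4} and \ref{ex: skt dim6} — each $\A_3$-type piece $x^j\wedge y^j\wedge(\text{something})$ differentiates to a term matched by the contribution coming from the expansion edges $f_j,g_j,h_j,k_j$ and the central edge $t_0$. The role of the specific non-orthogonality relations is precisely to make these terms cancel; indeed, Corollary \ref{cor: SKT} and formula \eqref{eq: romi y marina} already indicate which inner products between distinguished edges are forced, and the chosen $\pint$ is built to satisfy them.

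The main obstacle I anticipate is the bookkeeping in computing $c$ explicitly, because the edges are not orthonormal: to write $c$ in terms of a dual basis one must invert (the relevant block of) the Gram matrix $X_n$, and then $dc$ involves differentiating these dual forms. A cleaner route, which I would prefer, is to avoid dualizing altogether: work with the torsion as a trilinear form and verify $dc(w_1,w_2,w_3,w_4) = 0$ on basis $4$-tuples directly, using the identity
\[
dc(w_1,w_2,w_3,w_4) = -\sum_{i<j}(-1)^{i+j} c([w_i,w_j],w_1,\ldots,\widehat{w_i},\ldots,\widehat{w_j},\ldots,w_4),
\]
together with $[\mathfrak v,\mathfrak v]\subseteq \z$, $[\z,\cdot]=0$, and \eqref{eq: 3form c}. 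Since brackets land in the center, the only nonzero terms in $dc$ come from pairs $w_i,w_j$ both in $\mathfrak v$, and then the remaining two arguments must be such that $c([w_i,w_j],w_k,w_l)\neq 0$, forcing $w_k,w_l \in \mathfrak v$ as well — so it suffices to check $dc$ on $4$-tuples of non-isolated vertices. A short case analysis over the vertex set $\{v_1,v_2\}\cup\{x_j,y_j,z_j,w_j\}$, using the explicit brackets \eqref{eq: corchetes SKT} and the prescribed inner products, should then yield $dc=0$. The key point to get right is the sign pattern coming from $J$ acting on the expansion edges via \eqref{eq: Jtilde} and the simultaneous appearance of $\langle t_0,a_j\rangle=1$ and $\langle v_0,a_j\rangle=1$ (and the signs for $b_j$), which is exactly what makes the contributions from $a_j$ and from $b_j$ cancel against each other.
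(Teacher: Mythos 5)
Your proposal is correct in substance and would go through; it verifies the same thing the paper verifies ($dc=0$), but organizes the computation differently. The paper evaluates $c$ on basis triples (two non-isolated vertices and one central element), writes $c$ explicitly as a combination of decomposable $3$-forms in the \emph{algebraic} dual basis of $V\cup E$, and then differentiates using the structure equations $dt^0=-v^1\wedge v^2$, $da^j=-x^j\wedge y^j$, etc.; your preferred route skips the dual-basis expression and checks $dc$ pointwise via the Chevalley--Eilenberg formula, reducing (correctly, since brackets land in the center, $J$ sends every edge of $\G_n$ to a central element, and \eqref{eq: 3form c} applies) to $4$-tuples of non-isolated vertices. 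That reduction is sound, and the resulting case analysis is finite and easy: all tuples give $0$ trivially except $(v_1,v_2,x_j,y_j)$ and $(v_1,v_2,z_j,w_j)$, where the cancellation $1-1-1+1=0$ uses exactly $\langle t_0,a_j\rangle=\langle t_0,b_j\rangle=1$ and $\|f_j\|=\|g_j\|=\|h_j\|=\|k_j\|=1$. Two minor corrections to your framing: first, your worry about inverting the Gram matrix $X_n$ is unfounded even for the paper's route, because the paper expands $c$ in the algebraic dual basis of the chosen vector-space basis (coefficients are just the values $c(e_i,e_j,e_k)$), not in metric-dual $1$-forms; second, the products $\langle v_0,a_j\rangle$ and $\langle v_0,b_j\rangle$ do not enter the $dc$ computation at all (in either route) -- they are forced by $J$-orthogonality of the metric, i.e.\ by the structure being Hermitian, while the cancellations in $dc$ are governed only by the edge--edge products $\langle t_0,a_j\rangle$, $\langle t_0,b_j\rangle$ and the unit norms of the expansion edges. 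Also note your sign in the Chevalley--Eilenberg formula differs from the usual convention, which is harmless for proving vanishing. What remains to make your proposal a complete proof is simply to carry out the short case analysis you describe.
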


\begin{proof}
We will show that the Bismut torsion $3$-form $c$ on $\n_{\G_n}$ is closed. First, note that $\pint$ satisfies \eqref{eq: complement}. Thus, we have the following expression for $c$ (given in $\eqref{eq: 3form c}$), where $x$ and $y$ are non-isolated vertices and $z$ is a central element in $\n_{\G_n}$:
\[ c(x,y,z)=-\langle [Jx,Jy], z\rangle. \]
We then have 
\begin{itemize}
\renewcommand{\labelitemi}{$\diamond$}
\begin{multicols}{3}
    \item $c(v_{1},v_{2},t_0)=-k$,
    \item $c(v_{1},v_{2},a_j)=-1$,
    \item $c(v_1,v_2,b_j)=-1$,
    \item $c(x_j,y_j,a_j)=-k$,
    \item $c(x_j,y_j,t_0)=-1$,
    \item $c(x_j,y_j,v_0)=-1$,
    \item $c(z_j,w_j,b_j)=-k$,
    \item $c(z_j,w_j,t_0)=-1$,
    \item $c(z_j,w_j,v_0)=1$,
    \item $c(v_1,z_j,k_j)=-1$,
    \item $c(v_1,w_j,h_j)=1$,
    \item $c(v_2,x_j,g_j)=1$,
    \item $c(v_2,y_j,f_j)=-1$.
\end{multicols}
\end{itemize}
Therefore, we may write
\begin{align} \label{eq: forma c}
    c & = -k v^1\wedge v^2 \wedge t^0 - v^1\wedge v^2 \wedge \sum_j a^j -v^1\wedge v^2 \wedge \sum_j b^j \\ 
    & \quad -k \sum_j x^j\wedge y^j\wedge a^j - \sum_j x^j\wedge y^j\wedge t^0 - \sum_j x^j\wedge y^j\wedge v^0 \nonumber \\
    & \quad -k \sum_j z^j\wedge w^j\wedge b^j - \sum_j z^j\wedge w^j\wedge t^0 + \sum_j z^j\wedge w^j\wedge v^0 \nonumber \\
    & \quad -v^1\wedge \sum_j z^j \wedge k^j + v^1\wedge \sum_j w^j \wedge h^j +v^2\wedge \sum_j x^j \wedge g^j - v^2\wedge \sum_j y^j \wedge f^j
        \nonumber
\end{align}
where, as usual, the superscripts indicate the dual basis. Next, we compute $dc$, using the following relations that are a consequence of \eqref{eq: corchetes SKT}:
\begin{itemize}
\renewcommand{\labelitemi}{$\diamond$}
\begin{multicols}{2}
    \item $dt^0  = -v^1\wedge v^2$, 
    \item $da^j  = -x^j\wedge y^j$,
    \item $db^j  = -z^j\wedge w^j$,
    \item $df^j  = -v^1\wedge x^j$,
    \item $dg^j  = -v^1\wedge y^j$,
    \item $dh^j  = -v^2\wedge z^j$,
    \item $dk^j  = -v^2\wedge w^j$.
\end{multicols}
\end{itemize}
Therefore, we proceed with the calculation of $dc$ as follows:
\begin{align*}
    dc & = v^1\wedge v^2 \wedge \sum_j x^j\wedge y^j + v^1\wedge v^2 \wedge \sum_j z^j\wedge w^j + \sum_j x^j\wedge y^j\wedge v^1 \wedge v^2 \\
    & \quad + \sum_j z^j\wedge w^j\wedge v^1 \wedge v^2 + v^1\wedge \sum_j z^j \wedge v^2\wedge w^j - v^1\wedge \sum_j w^j \wedge v^2 \wedge z^j \\
    & \quad - v^2\wedge \sum_j x^j \wedge v^1 \wedge y^j + v^2\wedge \sum_j y^j \wedge v^1 \wedge x^j \\
    & =0.
\end{align*}
This shows that the Hermitian structure $(J,\pint)$ on $\n_{\G_n}$ is SKT.
\end{proof}
\end{example}

\subsection*{Acknowledgments} This work was partially supported by CONICET, SECyT-UNC and ANPCyT (Argentina).
The authors are grateful to Denis Videla for suggesting the use of the blow-up of graphs, and also to María Alejandra Álvarez, Jonas Deré, Marcos Origlia and Ricardo Podestá for useful comments. The authors would also like to thank the anonymous referee for their valuable suggestions and careful reading of the manuscript.

\

\end{document}